\theoremstyle{plain}
\newtheorem{thm}{\protect\theoremname}[section]
\newtheorem*{thm*}{\protect\theoremname}
\newtheorem{lem}[thm]{\protect\lemmaname}
\newtheorem*{lem*}{\protect\lemmaname}
\newtheorem*{prop*}{\protect\propositionname}
\newtheorem{cor}[thm]{\protect\corollaryname}
\newtheorem*{cor*}{\protect\corollaryname}
\newtheorem{fact}[thm]{\protect\factname}
\newtheorem*{fact*}{\protect\factname}
\theoremstyle{definition}
\newtheorem{defn}[thm]{\protect\definitionname}
\newtheorem*{defn*}{\protect\definitionname}
\newtheorem{exam}[thm]{\protect\examplename}
\theoremstyle{remark}
\newtheorem{rem}[thm]{\protect\remarkname}
\newtheorem{claim}[thm]{\protect\claimname}
\numberwithin{equation}{subsection}
\newcommand{\cA}{\mathcal{A}}
\newcommand{\cC}{\mathcal{C}}
\newcommand{\cD}{\mathcal{D}}
\newcommand{\cK}{\mathcal{K}}
\newcommand{\cL}{\mathcal{L}}
\newcommand{\cM}{\mathscr{M}}
\newcommand{\cN}{\mathcal{N}}
\newcommand{\cP}{\mathcal{P}}
\newcommand{\R}{\mathbb{R}}
\newcommand{\N}{\mathbb{N}}
\newcommand{\E}{\mathbb{E}}
\newcommand{\eps}{\epsilon}
\newcommand{\indicator}[1]{\mathbbm{1}_{#1}}
\newcommand{\tensor}{\otimes}
\newcommand{\abs}[1]{\lvert#1\rvert}
\newcommand{\norm}[1]{\lvert\lvert#1\rvert\rvert}
\providecommand{\corollaryname}{Corollary}
\providecommand{\definitionname}{Definition}
\providecommand{\factname}{Fact}
\providecommand{\lemmaname}{Lemma}
\providecommand{\propositionname}{Proposition}
\providecommand{\theoremname}{Theorem}
\providecommand{\remarkname}{Remark}
\providecommand{\conjecturename}{Conjecture}
\providecommand{\examplename}{Example}
\providecommand{\claimname}{Claim}
\providecommand{\problemname}{Problem}
\providecommand{\questionname}{Question}
\begin{document}

\title{Low Temperature Asymptotics in Spherical Mean Field Spin Glasses}

\author{Aukosh Jagannath}
\address[Aukosh Jagannath]{Courant Institute of Mathematical Sciences, 251 Mercer St.\ NY, NY, USA, 10012}
\email{aukosh@cims.nyu.edu}

\author{Ian Tobasco}
\address[Ian Tobasco]{Courant Institute of Mathematical Sciences, 251 Mercer St.\ NY, NY, USA, 10012}
\email{tobasco@cims.nyu.edu}

% \keywords{Crisanti-Sommers variational problem, spherical mixed $p$-spin glasses, ground state energy, $\Gamma$-limit, convex duality, obstacle problem}
% 
% \subjclass[2010]{
%60K35, %Interacting random processes; statistical mechanics type models; percolation theory [See also 82B43, 82C43]
%82B44, %Disordered systems (random Ising models, random Schrödinger operators, etc.)
%82D30,  %Random media, disordered materials (including liquid crystals and spin glasses)
%%49S05, %Variational principles of physics (should also be assigned at least one other classification number in section 49)
%49J45, %Methods involving semicontinuity and convergence; relaxation
%49K35, %Minimax problems
%49J40%Variational methods including variational inequalities [See also 47J20]
%} 
%\thanks{The final publication is available at }

\begin{abstract}
In this paper, we study the low temperature limit of the spherical
Crisanti-Sommers variational problem. We identify the $\Gamma$-limit
of the Crisanti-Sommers functionals, thereby establishing a rigorous
variational problem for the ground state energy of spherical mixed
$p$-spin glasses. As an application, we compute moderate deviations
of the corresponding minimizers in the low temperature limit. In particular,
for a large class of models this yields moderate deviations for the
overlap distribution. We then analyze the ground state energy problem.
We show that this variational problem is dual to an obstacle-type
problem. This duality is at the heart of our analysis. We present
the regularity theory of the optimizers of the primal and dual problems.
This culminates in a simple method for constructing a finite dimensional
space in which these optimizers live for any model. As a consequence
of these results, we unify independent predictions of Crisanti-Leuzzi
and Auffinger-Ben Arous regarding the 1RSB phase in this limit. We
find that the ``positive replicon eigenvalue'' and ``pure-like''
conditions are together necessary for optimality, but that neither
are themselves sufficient, answering a question of Auffinger and Ben
Arous in the negative. We end by proving that these conditions completely
characterize the 1RSB phase in $2+p$-spin models.
\end{abstract}

\date{\today}

\maketitle

\section{Introduction}

In this paper, we study the Crisanti-Sommers variational problem which
is defined as follows. Let $\xi\left(t\right)=\sum_{p\geq2}\beta_{p}^{2}t^{p}$,
which we call the model, and assume that $\xi(1+\epsilon)<\infty$
for some $\epsilon>0$. The Crisanti-Sommers functional is defined
for $\mu\in\Pr\left([0,1]\right)$ by

\begin{equation}
\cP_{\beta,h,\xi}(\mu)=\frac{1}{2}\left(\int_{0}^{1}\beta^{2}\xi''(s)\hat{\mu}(s)ds+\int_{0}^{1}\left(\frac{1}{\hat{\mu}\left(s\right)}-\frac{1}{1-s}\right)\,ds+h^{2}\hat{\mu}\left(0\right)\right)\label{eq:CSfunc}
\end{equation}
where 
\[
\hat{\mu}\left(t\right)=\int_{t}^{1}\mu\left[0,s\right]\,ds.
\]
Here, $\beta$ is the inverse temperature and $h$ is the external
field and they satisfy $\beta>0$ and $h\geq0$. Note that since $\hat{\mu}(s)\leq1-s$,
the second integral is well-defined. The Crisanti-Sommers variational
problem is given by 
\begin{equation}
F(\beta,h,\xi)=\min_{\mu\in\Pr([0,1])}\frac{1}{\beta}\cP_{\beta,h,\xi}\left(\mu\right).\label{eq:Cris-som-vf}
\end{equation}
For experts: the functional defined above is a lower semi-continuous
extension of the functional originally described by Crisanti and Sommers
\cite{CriSom92}. Its minimization is the same as that of the functional
considered in \cite{TalSphPF06}. This is explained in more detail
in \prettyref{sub:csfunc-reform}.

The importance of the Crisanti-Sommers variational problem comes from
the study of spherical mixed $p$-spin glasses, which are defined
as follows. Let $\Sigma_{N}=S^{N-1}(\sqrt{N})$ and define the Hamiltonian
\[
H_{N}(\sigma)=\beta\sum_{p\geq2}^{\infty}\frac{\beta_{p}}{N^{\frac{p-1}{2}}}\sum_{i_{1},\ldots,i_{p}=1}^{N}g_{i_{1}\ldots i_{p}}\sigma_{i_{1}}\cdots\sigma_{i_{p}}+h\sum_{i=1}^{N}\sigma_{i}
\]
where $g_{i_{1}\dots i_{p}}$ are i.i.d.\ $\cN(0,1)$ random variables.
For the relationship between the study of these problems and the study
of the class of smooth, isotropic Gaussian processes on the sphere
in high dimension see \cite{ABA13}. Define the partition function
and Gibbs measure 
\[
Z_{N}=\int_{\Sigma_{N}}e^{H_{N}}dvol_{N}\quad\text{and}\quad G_{N}(dvol)=\frac{e^{H_{N}}}{Z_{N}}dvol_{N}
\]
where $dvol_{N}$ is the normalized volume measure on $\Sigma_{N}$.
It was predicted by Crisanti and Sommers \cite{CriSom92} and proved
by Talagrand \cite{TalSphPF06} and Chen \cite{ChenSph13} that the
thermodynamic limit of the free energy per site is given by the variational
formula 
\[
\lim\frac{1}{\beta N}\log Z_{N}=\min_{\mu\in\Pr([0,1])}\frac{1}{\beta}\cP_{\beta,h,\xi}(\mu).
\]
The minimizer, $\mu_{\beta,h,\xi}$, is thought of as the order parameter
in these systems, and is conjectured to be the limiting law of the
overlap, $R_{12}=(\sigma^{1},\sigma^{2})/N$, i.e.,
\[
\E G_{N}^{\tensor2}\left(R_{12}\in A\right)\to\mu_{\beta,h,\xi}(A).
\]
This is known, for example, when the collection $\{t^{p}:\beta_{p}\neq0\}$
is total in $C([0,1])$ (these are called \emph{generic }models)\cite{TalSphPF06,PanchSKBook}.

In this paper, we study the zero temperature, i.e., $\beta\to\infty$,
limit of the Crisanti-Sommers variational problem. This limit arises
naturally in the method of annealing, an important and nontrivial
technique used in the study of random optimization problems \cite{KirGelVec83,MMZ01,MPV87}.
The study of mean field spin glasses is intimately related to the
study of random optimization problems in highly disordered energy
landscapes. For such problems, it is important to determine the precise
asymptotics of the maximum in the limit $N\to\infty$, i.e., 
\[
\lim_{N\to\infty}\frac{1}{N}\max_{\sigma\in\Sigma_{N}}H_{N}(\sigma)=GSE.
\]
This quantity is called the \emph{ground state energy.} As an example,
the case where $\xi(t)=t^{2}$ and $h=0$ corresponds to the study
of the (renormalized) largest eigenvalue of a GOE random
matrix which is well understood. In contrast, the case $\xi(t)=t^{3}$
is much less understood, and corresponds to the study of the 
maximum of a random trilinear form with Gaussian coefficients. 
A related and natural question is to study the limiting law of the relative positions of near minimizers in the asympotic that their energies approach the ground state.
For a summary of what is known see \cite{ABA13,Sub15}.

The central idea of the method of annealing is that the ground state
energy can be computed from the free energy by sending the temperature
to zero. In our setting, this means that if $h=\beta h_{0}(\beta)$
and $h_{0}\to\overline{h}$ as $\beta\to\infty$, then 
\begin{equation}
GSE=\lim_{\beta\to\infty}\min_{\mu\in\Pr([0,1])}\frac{1}{\beta}\cP_{\beta,h,\xi}\left(\mu\right).\label{eq:GSE-is-lim-P}
\end{equation}
The proof of this result in our setting is standard. (See \cite{ABA13}
for a proof in the case that $h_{0}=0$. The case $h_{0}\neq0$ follows
by a straightforward extension of their arguments.) One expects in
the annealing limit that 
\[
\lim_{\beta\to\infty}\lim_{N\to\infty}\E\int f(R_{12})dG_{N}^{\tensor2}(R_{12}\in A)=f(1).
\]
As an application of our analysis, we obtain the next order correction
to this statement. 

Our approach to the zero temperature limit is through $\Gamma$-convergence.
This notion was introduced by de Giorgi and is a standard tool in
the asymptotic analysis of variational problems. An immediate consequence
of the theory that the ground state energy is the minimum value of
the $\Gamma$-limit of $\frac{1}{\beta}\cP_{\beta,h,\xi}$ . A further
consequence is that the minimizers at finite $\beta$ converge to
the minimizer at $\beta=\infty$, in an appropriate topology. By studying
the $\Gamma$-limit as a variational problem unto itself, we are able
to rule out certain conjectures pertaining to the character of the
minimizers at large, but finite, $\beta$.

The zero temperature problem is a strictly convex minimization problem.
In principle, one could study its minimizer through its first order
optimality conditions. This approach is well-known in the literature
surrounding the Parisi variational problem \cite{TalSphPF06,AuffChen13,JagTobPD15}.
In this paper, we take an entirely different approach through convex
duality. We obtain the convex dual of the zero temperature problem:
it is a concave maximization problem of obstacle-type. Obstacle-type
problems and their first order optimality conditions, called \textquotedblleft variational
inequalities\textquotedblright , have a long history in the calculus
of variations (see e.g. \cite{CafFried79,Caf98,KinderStamp2000}).
In these problems, the study of the contact set, the points at which
the obstacle and the optimizer are equal, is crucial. In our analysis,
we find an interesting connection between the contact set and the
choice of model. This connection shares similarities with some results
of Cimatti on the shape of a constrained elastic beam \cite{Cim73}.
Exploiting this connection, we are able to comment on the phase diagram
of the zero temperature problem in full generality.

Determining the full phase diagram of the Crisanti-Sommers variational
problem, particularly sharply determining phase boundaries,
remains an important and difficult question. The region of
$(\beta,h,\xi)$-space in which the minimizer is $1$-atomic is known
as the RS region, the region in which it is $(k+1)$-atomic is the
$k$RSB region, and the region in which it is not $k$-atomic for
any $k\in\N$ is the FRSB region. A typical question is to find explicit
conditions on $(\beta,h,\xi)$ that characterize a $k$RSB region.
In the FRSB region, little is known rigorously about the character
of the minimizer. If the minimizer is absolutely continuous on an
interval $[a,b]$, then on that interval its density is known (see
for example \cite{TalSphPF06,CriLeu04}). However, as suggested by
the work of \cite{Kra07,CriLeu07}, one expects that in full generality,
the support of the absolutely continuous part may consist of many
disjoint intervals. It is interesting
to ask if there is a systematic way to reduce the complexity of the
space in which the minimizer lives.

These questions have natural analogs when $\beta=\infty$.
Though difficult at the level of the primal problem,
they are very natural at the level of the dual: they are questions about the
topology of the contact set.  An isolated atom for the minimizer corresponds to
an isolated point in the contact set. An interval in the support of the minimizer
corresponds to an interval in the contact set. 
The question of ``how many RSBs" is then 
``how many connected components does the contact set have". We give 
a simple method to upper bound  the number of connected components. 
Furthermore, our work gives strong evidence for the predictions of \cite{Kra07,CriLeu07}. 
This scenario runs against the common intuition in the mathematical spin glass community. 

The bulk of this paper is regarding these questions. Through our analysis of the $\Gamma$-limit,
we describe a general algorithm for producing the minimizer which
reduces the problem to a finite dimensional optimization problem.
In the case of $2+p$ models, i.e.,\ $\xi(t)=\beta_{2}^{2}t^{2}+\beta_{p}^{2}t^{p}$,
we give an exact characterization of the 1RSB region in terms of the
coefficients of the model. For general models, these conditions are
seen to be necessary. This result rules out the (distinct) characterizations
of 1RSB suggested by \cite{CriLeu04} and \cite{ABA13}, and instead
proves that, in the case of $2+p$ models, the intersection of these
conditions characterizes 1RSB. Our main tool for establishing these
results is the new convex duality principle for the limiting functional
at $\beta=\infty$.

When a first draft of this paper was complete, we learned of the related
work of Chen and Sen \cite{ChenSen15} in which the authors also treated
the zero temperature limit of the Crisanti-Sommers variational problem.
In \cite{ChenSen15}, the authors provided an alternative, but equivalent,
variational representation for the ground state energy, obtained the
first order optimality conditions and its immediate consequences,
then turned to probabilistic questions which, while related, do not
overlap with the present work. See \prettyref{rem:chen-sen-rem} and
the discussion after \prettyref{thm:duality} for more on the relation
between these results.

\subsection{Limiting Problem: Gamma convergence results}

Our first result establishes a variational representation for the
ground state energy. We begin by introducing the topological space 

\begin{equation}
\cA=\left\{ \nu\in\cM\left(\left[0,1\right]\right):\ \nu=m\left(t\right)\,dt+c\delta_{1},\ m\left(t\right)\geq0\text{ is non-decreasing and cadlag}\right\} \label{eq:calA}
\end{equation}
equipped with the relative topology induced by the weak-$*$ topology
on $\cM([0,1])$, the space of finite measures on $[0,1]$, i.e.,
the topology of weak convergence of measures. In the subsequent, $m(t)$
will always refer to the unique representative of the density of $\nu$
that satisfies the above conditions and is left-continuous at $1$. 

On the space $\cA$, we define the subsets 
\[
X_{\beta}=\left\{ \nu\in\cA\ :\ d\nu=\beta\mu\left[0,t\right]dt,\quad\mu\in\Pr\left[0,1\right]\right\} ,
\]
and we lift the functional $\frac{2}{\beta}\cP_{\beta,h,\xi}$ to
$\cA$ as $F_{\beta,h,\xi}:\cA\to[0,\infty]$, 
\[
F_{\beta,h,\xi}(\nu)=\begin{cases}
\frac{2}{\beta}\cP_{\beta,h,\xi}(\mu) & \nu\in X_{\beta}\\
\infty & \nu\notin X_{\beta}
\end{cases}.
\]
Finally, we define the functional $GS_{h,\xi}:\cA\to[0,\infty]$ as

\begin{equation}
GS_{h,\xi}\left(\nu\right)=\begin{cases}
\int_{0}^{1}\xi''\left(s\right)\nu[s,1]+\frac{1}{\nu[s,1]}\,ds+h^{2}\nu[0,1] & \nu\neq0\\
\infty & \nu=0
\end{cases}.\label{eq:GSfunc}
\end{equation}

Observe that $GS$ has a unique minimizer. Indeed, it is strictly
convex by the strict convexity of $x\to\frac{1}{x}$ and sequentially
lower semi-continuous by Fatou's lemma. Furthermore, the sets $\left\{ GS(\nu)\leq C\right\} $
for $C<\infty$ are sequentially compact in $\cA$ by \prettyref{lem:A-set-cpt}
applied with $f=\xi''$. 

Before we state our first result, we remind the reader of the notion
of \emph{sequential $\Gamma$-convergence} \cite{Bra02}.
\begin{defn}
Let $X$ be topological space. We say that a sequence of functionals
$F_{n}:X\to[-\infty,\infty]$ \emph{sequentially $\Gamma-$converges}
to $F:X\to[-\infty,\infty]$ if
\begin{enumerate}
\item The $\Gamma-\liminf$ inequality holds: for every $x$ and every sequence
$\lim_{n\to\infty}x_{n}=x$ 
\[
\liminf_{n}F_{n}(x_{n})\geq F(x)
\]

\item The $\Gamma-\limsup$ inequality holds: for every $x$ there is a
sequence $\lim_{n\to\infty}x_{n}=x$ such that
\[
\limsup_{n\to\infty}F_{n}(x_{n})\leq F(x)
\]

\end{enumerate}
We denote this by $F_{n}\stackrel{\Gamma}{\to}F$. For a sequence
of functionals indexed by a real parameter $\beta$ we say that $F_{\beta}\stackrel{\Gamma}{\to}F$
if for every subsequence $\beta_{n}\to\infty$, $F_{\beta_{n}}\stackrel{\Gamma}{\to}F$.\end{defn}
\begin{rem}
The following remark will only be of interest to experts in the field
of $\Gamma$-convergence. We observe here that the notion of sequential
$\Gamma$-convergence is distinct from the notion of $\Gamma$-convergence
in our setting as we are not working in a metrizable space. For a
brief discussion of this see \cite{Bra02,Dal93}. Nevertheless, the
usual consequences of $\Gamma$-convergence carry through to our setting
in the sequential case. We place the proof of those results that we
use in the appendix.\end{rem}
\begin{thm}
\label{thm:gamma-conv-min-conv} Suppose that $(\beta,h,\xi_{\beta})$
are such that $\frac{h}{\beta}\to\bar{h}$ and $\xi''_{\beta}\to\xi''$
uniformly as $\beta\to\infty$. Then, 
\[
F_{\beta,h,\xi_{\beta}}\stackrel{\Gamma}{\to}GS_{\bar{h},\xi}.
\]
 In particular, 
\begin{equation}
GSE=\min_{\nu\in\cA}\,\frac{1}{2}GS_{h,\xi}\left(\nu\right).\label{eq:GSE-vf}
\end{equation}
 Furthermore, we have that if $\nu_{\beta}$ are the (unique) minimizers
of $F_{\beta,h,\xi_{\beta}}$ then 
\[
\nu_{\beta}\stackrel{w-*}{\longrightarrow}\nu
\]
 where $\nu$ is the unique minimizer of $GS_{h,\xi}$.
\end{thm}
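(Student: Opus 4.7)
The strategy is the standard $\Gamma$-convergence recipe: establish the $\liminf$ and $\limsup$ inequalities, prove equi-coercivity of the family $\{F_{\beta,h,\xi_\beta}\}$ along bounded-energy sequences, and invoke the standard consequences of $\Gamma$-convergence (stated in the appendix) to get convergence of the minimum values and of the minimizers. Uniqueness of the minimizer of the strictly convex $GS_{\bar h,\xi}$ (noted right after \eqref{eq:GSfunc}) then promotes subsequential convergence to full weak-$*$ convergence, and combining with \eqref{eq:GSE-is-lim-P} yields the variational formula for $GSE$. The key preparatory step is to rewrite the functional on $X_\beta$ in a form where the limit is transparent. Using $\nu[s,1]=\beta\hat\mu(s)$ and $\nu[0,1]=\beta\hat\mu(0)$, one checks
\[
F_{\beta,h,\xi_\beta}(\nu) = \int_0^1 \xi_\beta''(s)\,\nu[s,1]\,ds + \int_0^1\!\left(\frac{1}{\nu[s,1]}-\frac{1}{\beta(1-s)}\right)ds + (h/\beta)^2\,\nu[0,1].
\]
The inequality $\nu[s,1]=\beta\hat\mu(s)\leq\beta(1-s)$ built into $X_\beta$ makes the subtracted $1/(\beta(1-s))$ a valid nonnegative regularisation that vanishes pointwise as $\beta\to\infty$.

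\noindent\textbf{Liminf.} Given $\nu_\beta\to\nu$ weak-$*$ I may assume $\nu_\beta\in X_\beta$. Portmanteau gives $\nu_\beta[s,1]\to\nu[s,1]$ at a.e.\ $s$, and $\sup_\beta\nu_\beta[0,1]<\infty$ by convergence of total mass. Bounded convergence with $\xi_\beta''\to\xi''$ uniformly handles the first term. The middle integrand is pointwise nonnegative and converges a.e.\ to $1/\nu[s,1]$, so Fatou yields $\liminf\int(\tfrac{1}{\nu_\beta[s,1]}-\tfrac{1}{\beta(1-s)})\,ds\geq \int\tfrac{1}{\nu[s,1]}\,ds$. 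The field term converges directly via $h/\beta\to\bar h$. Summing produces the $\liminf$ inequality.

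\noindent\textbf{Limsup / recovery sequence.} Given $\nu=m(t)\,dt+c\delta_1\in\cA$ with $GS_{\bar h,\xi}(\nu)<\infty$, pick $\delta_\beta$ with $\beta\delta_\beta\to c$ ($\delta_\beta=c/\beta$ when $c>0$, $\delta_\beta=1/\beta^2$ when $c=0$) and set
\[
m_\beta(t) = \bigl(m(t)\wedge\beta\bigr)\mathbf{1}_{[0,1-\delta_\beta)}(t) + \beta\,\mathbf{1}_{[1-\delta_\beta,1]}(t).
\]
For $\beta$ large this $m_\beta$ is non-decreasing, cadlag with $m_\beta(1^-)=\beta$, so $\nu_\beta:=m_\beta\,dt\in X_\beta$ (with $\mu_\beta[0,t]=m_\beta(t)/\beta$), and a direct calculation of $\int f\,d\nu_\beta$ for continuous $f$ gives $\nu_\beta\to\nu$ weak-$*$. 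On $[1-\delta_\beta,1]$, $\nu_\beta[s,1]=\beta(1-s)$, so the middle integrand vanishes exactly; on $[0,1-\delta_\beta]$ it converges pointwise to $1/\nu[s,1]$ and is dominated either by $\nu_\beta[s,1]\geq \beta\delta_\beta\to c$ (when $c>0$), or, when $c=0$, by first approximating $\nu$ by $\nu^\epsilon$ having a small atom $\epsilon\delta_1$ at the cost of trimming mass near $1$, applying the $c>0$ construction, and diagonalising using lower semi-continuity of $GS_{\bar h,\xi}$ to send $\epsilon\to 0$. The first and third terms converge routinely, producing the matching $\limsup$.

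\noindent\textbf{Equi-coercivity and conclusion.} If $F_{\beta,h,\xi_\beta}(\nu_\beta)\leq C$, then for any fixed $\delta\in(0,1)$ and $\beta$ large,
\[
C \geq \int_0^{1-\delta}\xi_\beta''(s)\nu_\beta[s,1]\,ds \geq \bigl(\xi_\beta'(1-\delta)-\xi_\beta'(0)\bigr)\,\nu_\beta[1-\delta,1],
\]
and $\xi''\not\equiv 0$ bounds the bracketed factor below uniformly in $\beta$ large, controlling $\nu_\beta[1-\delta,1]$. Monotonicity of the density then gives $\nu_\beta[0,1-\delta]\leq \tfrac{1-\delta}{\delta}\nu_\beta[1-\delta,1]$, so $\nu_\beta[0,1]$ is uniformly bounded; Banach--Alaoglu supplies weak-$*$ precompactness in $\cM([0,1])$ and \prettyref{lem:A-set-cpt} keeps the limit in $\cA$. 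The standard $\Gamma$-convergence consequences then give $\min F_{\beta,h,\xi_\beta}\to\min GS_{\bar h,\xi}$ and subsequential convergence of the minimizers, upgraded to full weak-$*$ convergence by uniqueness of the minimizer. The main obstacle I anticipate is the recovery step when $c=0$: the $1/\nu[s,1]$ integrand may be genuinely singular near $s=1$, and the recovery sequence must avoid creating spurious contributions to $\int 1/\nu_\beta[s,1]\,ds$ there; the density/diagonalisation argument sketched above is the cleanest route around this difficulty.
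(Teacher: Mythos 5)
Your proposal is correct and follows the same overall route as the paper: $\Gamma$-$\liminf$ via Fatou after rewriting the functional on $X_\beta$, $\Gamma$-$\limsup$ via an explicit recovery sequence, equi-coercivity from the $\int\xi''\nu[s,1]\,ds$ term, and then the fundamental theorem of sequential $\Gamma$-convergence together with strict convexity and \eqref{eq:GSE-is-lim-P}. The one genuine point of divergence is the recovery sequence. You truncate the density at level $\beta$ and attach a boundary layer of width $\delta_\beta$ with $\beta\delta_\beta\to c$, which forces you into a density/diagonalisation argument when $c=0$ because no integrable dominating function is available near $s=1$ (your own diagnosis is accurate; note also that in this non-metrizable setting the diagonalisation must be justified, e.g.\ by metrizability of the weak-$*$ topology on the mass-bounded sets your sequences live in). The paper instead sets $c_\beta=c\vee\beta^{-1}$ and chooses the cutoff $q_\beta$ by the intermediate value theorem so that $\int_{q_\beta}^1 m+c_\beta=\beta(1-q_\beta)$; this makes $\nu_\beta[s,1]=\int_s^1 m\,dt+c_\beta$ exactly on $[0,q_\beta]$, so the middle integrand is monotone increasing in $\beta$ and the monotone convergence theorem applies uniformly in $c\ge 0$, with no case split and no approximation step. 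Your version works but is longer in exactly the case you flagged; the paper's matching condition is the cleaner device. (A further cosmetic difference: the paper proves the lemmas for $h=0$, $\xi_\beta=\xi$ fixed and then absorbs the general hypotheses by the continuous-perturbation stability theorem, whereas you carry $\xi_\beta''\to\xi''$ and $h/\beta\to\bar h$ through the estimates directly; both are fine.)
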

As explained above, an an immediate corollary of the $\Gamma-$convergence
is a moderate deviation principle in the limit $\beta\to\infty$ for
the minimizers.
\begin{cor}
\label{cor:moderate-dev}Let $\nu=m(t)dt+c\delta_{1}\in\cA$ be the
unique minimizer of $GS_{h,\xi}$, and $\mu_{\beta}$ be the (unique)
minimizers of $\cP_{\beta,h,\xi_{\beta}}$ where $(\beta,h,\xi_{\beta})$
satisfy the conditions of \prettyref{thm:gamma-conv-min-conv}. Then, 
\begin{enumerate}
\item For any $f\in C^{1}$, 
\[
\lim\beta\left[f(1)-\int fd\mu_{\beta}\right]=\int f'd\nu.
\]

\item For every $t<1$ that is a continuity point of $m(t)$, 
\[
\beta\mu_{\beta}[0,t]\to m(t).
\]

\item Let $q_{\beta}\to1$ be such that $\beta\mu_{\beta}[0,q_{\beta})\to m(1^{-})$,
and suppose that $m(1^{-})<\infty$. Then if $Y_{\beta}$ have law
$\mu_{\beta}$, it follows that 
\begin{align*}
\nu\left(\left\{ 1\right\} \right) & =\lim\E_{\mu_{\beta}}\left(\beta(1-Y_{\beta})\vert Y_{\beta}\in[q_{\beta},1]\right).
\end{align*}

\end{enumerate}
\end{cor}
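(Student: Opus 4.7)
The plan is to deduce all three statements from the weak-$*$ convergence $\nu_{\beta}\to\nu$ given by \prettyref{thm:gamma-conv-min-conv}, where $d\nu_{\beta}=\beta\mu_{\beta}[0,t]\,dt$ and $\nu=m(t)\,dt+c\delta_{1}$. The common strategy is to rewrite each quantity involving $\mu_{\beta}$ as an integral against $\nu_{\beta}$ and then pass to the limit using weak-$*$ convergence and the structural properties of $\nu_\beta$ (monotonicity and absolute continuity of its density).

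For Part (1), I would combine $f(1)-f(t)=\int_{t}^{1}f'(s)\,ds$ with Fubini to obtain
\[
\beta\left[f(1)-\int f\,d\mu_{\beta}\right]=\int_{0}^{1}f'(s)\cdot\beta\mu_{\beta}[0,s]\,ds=\int f'\,d\nu_{\beta}.
\]
Since $f'\in C([0,1])$ by hypothesis, weak-$*$ convergence gives the result immediately.

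For Part (2), the distribution function $t\mapsto\nu[0,t]$ is continuous on $[0,1)$ because $\nu|_{[0,1)}$ is absolutely continuous, so weak-$*$ convergence yields $\int_{0}^{t}\beta\mu_{\beta}[0,s]\,ds\to\int_{0}^{t}m(s)\,ds$ for every $t<1$. The remaining step is to upgrade this integral convergence to pointwise convergence at continuity points of $m$. Since both $\beta\mu_{\beta}[0,\cdot]$ and $m$ are non-decreasing, one sandwiches
\[
\frac{1}{\epsilon}\int_{t-\epsilon}^{t}\beta\mu_{\beta}[0,s]\,ds\leq\beta\mu_{\beta}[0,t]\leq\frac{1}{\epsilon}\int_{t}^{t+\epsilon}\beta\mu_{\beta}[0,s]\,ds,
\]
sends $\beta\to\infty$, then $\epsilon\to0$, invoking the continuity of $m$ at $t$.

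Part (3) is the main technical step since the cutoff $q_{\beta}$ is moving, so one cannot directly apply weak-$*$ convergence on $[q_{\beta},1]$. Set $a_{\beta}=\beta\mu_{\beta}[0,q_{\beta})\to m(1^{-})$ and observe $\mu_{\beta}[q_{\beta},1]=1-a_{\beta}/\beta\to1$, so the conditioning is nondegenerate. Fubini gives
\[
\beta\int_{q_{\beta}}^{1}(1-y)\,d\mu_{\beta}(y)=\nu_{\beta}[q_{\beta},1]-(1-q_{\beta})a_{\beta},
\]
and the subtracted term vanishes since $a_{\beta}$ is bounded and $q_{\beta}\to1$. To handle $\nu_{\beta}[q_{\beta},1]$ I would split it as $\nu_{\beta}[0,1]-\nu_{\beta}[0,q_{\beta})$: the first piece converges to $\int_{0}^{1}m+c$ by weak-$*$ convergence, and for the second I would write
\[
\nu_{\beta}[0,q_{\beta})=\int_{0}^{1}\beta\mu_{\beta}[0,s]\indicator{s<q_{\beta}}\,ds
\]
and apply the dominated convergence theorem---the integrand converges a.e.\ to $m(s)$ by Part (2) and is uniformly dominated by $a_{\beta}$ on $[0,q_{\beta})$ by monotonicity. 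Subtracting yields $\nu_{\beta}[q_{\beta},1]\to c=\nu(\{1\})$, and dividing by $\mu_{\beta}[q_{\beta},1]\to 1$ finishes the argument. The assumption $m(1^{-})<\infty$ is precisely what supplies a bounded dominating function.
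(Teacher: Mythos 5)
Your proposal is correct. Parts (1) and (3) follow the same route as the paper: (1) is the same integration by parts, and (3) is algebraically identical to Lemma~\ref{lem:atom-conv} — your identity $\beta\int_{q_\beta}^1(1-y)\,d\mu_\beta = \nu_\beta[q_\beta,1]-(1-q_\beta)a_\beta$ combined with $\nu_\beta[q_\beta,1]=\nu_\beta[0,1]-\nu_\beta[0,q_\beta)$ is exactly the paper's decomposition $III = (\text{total}) - I - II$, and you use the same dominated convergence argument with $a_\beta$ as the eventual bound. The only genuine divergence is in Part (2): the paper (Lemma~\ref{lem:cdfs-conv}) derives a uniform bound $\sup_{[0,T]}\beta\mu_\beta[0,\cdot]\leq C(T)$, extracts a pointwise-convergent subsequence via a Helly-type argument, identifies the limit with $m$ a.e.\ by dominated convergence, and then invokes the subsequence principle; you instead pass to convergence of $\nu_\beta[t-\epsilon,t]$ and $\nu_\beta[t,t+\epsilon]$ directly (which requires noting $\nu$ has no atoms on $[0,1)$ and that total mass converges, so portmanteau applies) and then squeeze using monotonicity of $\beta\mu_\beta[0,\cdot]$ and continuity of $m$ at $t$. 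Your version is shorter and avoids the compactness/subsequence bookkeeping; the paper's version produces the uniform local bound $C(T)$ as a by-product, which it then reuses as the dominating function inside Lemma~\ref{lem:atom-conv}. Both arguments are sound.
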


\begin{rem}
\label{rem:chen-sen-rem}In \cite{ChenSen15}, Chen and Sen also obtained
\eqref{eq:GSE-vf}. In their notation $\nu([0,1])=L$ and $m(t)=\alpha_{0}(t)$.
The convergence results stated there are equivalent to \prettyref{cor:moderate-dev}
(2), combined with the convergence of $\nu_{\beta}\left([0,1]\right)$
to $\nu([0,1])$. The strict inequality between $L$ and $\int\alpha_{0}$
in \cite{ChenSen15} will follow from \prettyref{thm:(a-priori-estimates)}.
\end{rem}

\begin{rem}
An immediate consequence of this result is a moderate deviation principle
for the overlap distribution, $R_{12}$, for models for which $\mu_{\beta}$ is
known to be its limiting law (e.g. generic models).
By a standard differentiation and convexity argument (see, e.g.,  \cite[Theorems 3.7, 3.8]{PanchSKBook}), the Gibbs measure concentrates on the set $\{\abs{H_N/N-\E\langle H_N\rangle/N}<\epsilon\}$ in the thermodynamic limit for all $\eps>0$. By a standard integration by parts argument (see also \cite[Theorem 1.2]{TalSphPF06}), $\lim_{N}\E\langle H_N \rangle/N = \int(\xi(1)-\xi(t))\beta d\mu_\beta$. These results, along with those in \prettyref{cor:moderate-dev}, yield asymptotic information about the law of the relative positions of near maximizers of $H_N$ in the large $N$, large $\beta$ limit.
\end{rem}

\begin{rem}
As we shall soon see, $m(1^{-})<\infty$ in our setting (see \prettyref{thm:(a-priori-estimates)}).
We note, however, that it is not true that $\beta\mu_{\beta}\to dm$
weakly as measures, since $\beta\mu_{\beta}([0,1])=\beta\to\infty$.
\end{rem}

\begin{rem}
One has to be careful interpreting $(3)$ for the following reason:
it may be that $\beta(1-q_{\beta})$ explodes. This is neither prevented
by the convergence on $\cA$ nor by finite energy considerations.
The following is an interesting example to keep in mind. Let $\beta\mu_{\beta}=m\delta_{q_{1}}+(1-m)\delta_{q_{2}}$
where $q_{1}=1-\frac{1}{\sqrt{\beta}}$, $q_{2}=1-\frac{1}{\beta}$,
and $m=\sqrt{\beta}$. Then $\int_{s}^{1}\beta\mu_{\beta}dt\sim2+O(\frac{1}{\sqrt{\beta}})$
and the corresponding energy scales like $\frac{1}{2}+\frac{1}{\sqrt{\beta}}\log(2)+O(\frac{1}{\beta})$
which is finite. A similar example can be constructed to show that
a quantification of the rate at which $q_{*}(\beta)=\sup supp\mu_{\beta}\to1$
is out of the reach of these methods as the topology of these results
are too weak (though clearly $\limsup\beta(1-q_{*}(\beta))\leq\limsup\int_{0}^{1}\beta\mu_{\beta}([0,t])dt\leq C$).
We believe that one would require sharp estimates on $q_{*}$, such
as might come from the first order optimality conditions, in order
to obtain such a result.
\end{rem}

\begin{exam}
Let $h=0$. Suppose that the minimizers $\mu_{\beta}$ are $1RSB$.
Then we have that $\beta(1-q_{*})\to\nu\left\{ 1\right\} $ and $\beta\mu_{\beta}\left(\{0\}\right)\to\nu([0,1))$. 
\end{exam}
The proofs of these results are in \prettyref{sec:Gamma-convergence-results}.

\subsection{Convex Duality Results}

We turn now to the analysis of the limiting variational problem. First,
we find it convenient to make the following change of variables. Define

\begin{align}
\cC & =\left\{ \phi\in C([0,1]):\ \phi\geq0,\ \phi\text{ is non-increasing},\ \phi\text{ is concave}\right\} \label{eq:calC}
\end{align}
and define the functional $P_{h,\xi}:\cC\to[0,\infty]$ by
\begin{equation}
P_{h,\xi}\left(\phi\right)=\int\xi''\phi+\frac{1}{\phi}dx+h^{2}\phi(0).\label{eq:Pfunc}
\end{equation}
Observe that the space $\cA$ and the space $\cC$ are in one-to-one
correspondence. In particular, every $\phi\in\cC$ is of the form
$\nu[s,1]$ for some $\nu\in\cA$ and similarly every $\nu\in\cA$
is of the form $\phi'(t)dt+\phi(1)\delta_{1}$ for some $\phi\in\cC$.
Here, the derivative $\phi'$ is understood distributionally and is
an element of $L^{1}$. This correspondence and other important results
about these spaces are summarized in \prettyref{sub:On-the-sets-C-and-A}.
In particular, observe that $GS(\nu)=P(\phi)$ whenever $\nu$ and
$\phi$ are in correspondence, so that 
\begin{equation}
GSE=\min_{\phi\in\cC}\,\frac{1}{2}P_{h,\xi}\left(\phi\right).\label{eq:P-GSE}
\end{equation}

Our main tool in the analysis of the limiting variational problem
is an important duality principle, which relates this problem to a
one-dimensional variational problem of obstacle-type. To define it,
let 

\begin{equation}
\cK_{h,\xi}=\left\{ \eta\in C([0,1])\ :\ \eta\geq\xi,\ \eta\left(1\right)=\xi\left(1\right),\ \eta'\left(0\right)=\xi'\left(0\right)-h^{2},\ \eta\text{ is convex}\right\} ,\label{eq:K-xi}
\end{equation}
equipped with the norm topology. Basic properties of this space are
summarized in \prettyref{sub:On-the-sets-C-and-A}. In particular,
$\eta'\in BV((0,1))$ so that $\eta'$ has well-defined trace at $0$.
Furthermore, $\eta''$ can be uniquely represented by $\mu\in\cM([0,1])$
with $\mu(\{0,1\})=0$. In the following, $\eta''$ will always refer
to this representative. 

Now define the functional $D:\cK_{h,\xi}\to[0,\infty)$, 
\begin{equation}
D\left(\eta\right)=2\int\sqrt{\eta_{ac}''(x)}dx.\label{eq:Dfunc}
\end{equation}
Here for a measure $\nu$, we let $\nu_{ac}(x)=\frac{\partial\nu}{\partial\cL}$
be its density with respect to $\cL$, the Lebesgue measure. Basic
properties of $D$ are proved in \prettyref{sub:Square-Roots-of}.
In particular, by \prettyref{cor:D-usc}, it is upper semi-continuous.

We then have the following duality principle:
\begin{thm}
\label{thm:duality}We have that
\[
\min_{\phi\in\cC}\,P_{h,\xi}\left(\phi\right)=\max_{\eta\in\cK_{h,\xi}}\,D\left(\eta\right).
\]
Furthermore, the optimizers satisfy 
\[
\phi^{2}\eta''(dx)=dx.
\]

\end{thm}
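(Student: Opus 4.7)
The strategy is classical convex duality. I would first establish the weak inequality $P_{h,\xi}(\phi) \geq D(\eta)$ for every admissible pair, then produce an explicit pair at which equality is attained by constructing the dual optimizer from the (unique) primal minimizer, and finally read the optimality condition $\phi^2 \eta''(dx) = dx$ off the complementary slackness.

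For weak duality, I would combine two ingredients. First, integrating by parts twice using the continuity of $\eta,\phi$, the BV regularity of $\eta',\phi'$, and the convention $\eta''(\{0,1\})=0$ yields
\begin{align*}
\int \phi\bigl(\eta''(dx) - \xi''(x)\,dx\bigr)
&= \phi(1)\bigl[\eta'(1^-) - \xi'(1)\bigr] + h^{2}\phi(0) \\
&\qquad + \bigl[\eta(0) - \xi(0)\bigr]\phi'(0^+) + \int(\eta - \xi)\,\phi''(dx).
\end{align*}
Every term on the right apart from $h^{2}\phi(0)$ has a definite sign from the definitions of $\cC$ and $\cK_{h,\xi}$: $\phi(1)\geq 0$ and $\eta'(1^-)\leq\xi'(1)$ (the latter because $\eta\geq\xi$ with equality at $1$); $\phi'(0^+)\leq 0$ and $\eta(0)\geq\xi(0)$; and $\phi''$ is a non-positive measure while $\eta-\xi\geq 0$. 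Hence $\int\phi\,\eta''(dx)\leq\int\xi''\phi\,dx + h^{2}\phi(0)$. Second, the pointwise Fenchel--Young inequality $\eta_{ac}''\phi + \phi^{-1}\geq 2\sqrt{\eta_{ac}''}$ integrates to $\int\phi\,\eta_{ac}''\,dx + \int\phi^{-1}\,dx\geq D(\eta)$. Chaining these, together with $\int\phi\,\eta''(dx)\geq\int\phi\,\eta_{ac}''\,dx$, gives $P_{h,\xi}(\phi)\geq D(\eta)$.

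For strong duality, the direct method produces a unique minimizer $\phi^{*}\in\cC$ of $P_{h,\xi}$ (strict convexity, sequential lower semi-continuity, and compact sub-level sets inherited from $GS_{h,\xi}$ under the correspondence $\cC\leftrightarrow\cA$); a priori bounds (to be established later in the paper) ensure $\phi^{*}\geq c>0$ on $[0,1]$. I would then define $\eta^{*}$ by $(\eta^{*})''(x) = (\phi^{*}(x))^{-2}$ with integration constants chosen so that $(\eta^{*})'(0^+) = \xi'(0) - h^{2}$ and $\eta^{*}(1) = \xi(1)$. Convexity of $\eta^{*}$ and the boundary conditions are immediate; the obstacle condition $\eta^{*}\geq\xi$ is obtained by writing the first-order optimality condition for $\phi^{*}$, differentiating $P_{h,\xi}(\phi^{*}+t(\chi-\phi^{*}))\geq P_{h,\xi}(\phi^{*})$ at $t\to 0^+$ for $\chi\in\cC$, and applying the same double IBP identity. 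This reduces optimality to
\[
(\xi'(1)-(\eta^{*})'(1^-))\chi(1) + (\xi(0)-\eta^{*}(0))\chi'(0^+) + \int(\xi-\eta^{*})\chi''(dx) \geq 0 \quad\forall\chi\in\cC.
\]
Testing against the family $\chi_{a}(x) = \min(1-a,\,1-x)$, which lies in $\cC$ and satisfies $\chi_{a}(1)=0$, $\chi_{a}'(0^+)=0$, and $\chi_{a}''(dx) = -\delta_{a}$, extracts $\eta^{*}(a)\geq\xi(a)$ at each $a\in(0,1)$; the case $a=0$ follows from the test $\chi(x)=1-x$, and $a=1$ holds by construction. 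Finally, $D(\eta^{*}) = P_{h,\xi}(\phi^{*})$ follows because the construction makes the AM--GM bound tight almost everywhere and, tested at the pair $(\phi^{*},\eta^{*})$, forces all IBP boundary terms to vanish; this is precisely the content of the complementary slackness $(\phi^{*})^{2}(\eta^{*})''(dx) = dx$.

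The main obstacle is translating the variational inequality on the cone $\cC$ into the pointwise obstacle condition $\eta^{*}\geq\xi$. Because $\cC$ is only a convex cone, one-sided perturbations are admissible and the Euler--Lagrange condition has no immediate pointwise content. The crucial technical choice is the family of concave test functions $\chi_{a}=\min(1-a,\,1-x)$, whose kink at $x=a$ concentrates $\chi_{a}''$ at a single point and isolates $\eta^{*}(a)-\xi(a)\geq 0$; the same family, together with $\chi=1-x$, also produces the separate boundary conditions and their complementary slackness.
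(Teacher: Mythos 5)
Your argument takes a genuinely different route from the paper. The paper's proof of this theorem goes through an abstract minimax argument: it introduces an auxiliary bifunctional $K(u,v)=(\xi''-v,u)+S(v)+h^2u(0)$ on $\cC\times B$ (where $B=\{\mu\in\cM_+:\mu(\{0,1\})=0\}$), verifies the hypotheses of a generalized Ky Fan minimax theorem (Brezis--Nirenberg--Stampacchia), shows $\sup_B K(\phi,\cdot)=P(\phi)$ and $\inf_\cC K(\cdot,\sigma)=S(\sigma)+\Xi(\sigma)$ with $\Xi$ a hard indicator of the dual constraint set, and only then constructs the dual optimizer from the primal one (\prettyref{lem:phi-inverse-integrable}) to upgrade $\sup$ to $\max$. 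Your proposal short-circuits the minimax machinery entirely: you establish the weak inequality $P(\phi)\geq D(\eta)$ for every admissible pair by combining the integration-by-parts identity with AM--GM (this is essentially what the paper isolates in \prettyref{lem:dual_optimal} as $P(\phi)-D(\eta)=I+II$), and then close the duality gap by producing an explicit optimal pair via the direct method plus the conjugate construction. This is the more elementary and self-contained route; what the paper's minimax approach buys is robustness (it does not presuppose anything about the structure of the primal minimizer) and a cleaner separation of the inf-sup equality from the attainment question.

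Two points in your write-up need attention, both repairable. First, you invoke ``a priori bounds (to be established later in the paper)'' to conclude $\phi^*\geq c>0$ before defining $(\eta^*)''=(\phi^*)^{-2}$. This is circular: in the paper's organization, that lower bound (\prettyref{lem:mass-bounds}) is part of the regularity theory that rests on the duality theorem itself. Fortunately the lower bound is not what you actually need here; it suffices to show $(\phi^*)^{-2}\in L^1$, and this follows directly from the primal first-order optimality condition by testing with the constant perturbation $\psi\equiv 1$, which gives $\int(\phi^*)^{-2}\leq h^2+\int\xi''<\infty$ (precisely the argument in \prettyref{lem:phi-inverse-integrable}; strict positivity of $\phi^*$ on $[0,1)$ already follows from concavity and $\phi^*\not\equiv 0$). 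Second, your integration-by-parts identity requires $\phi'\in BV$, which can fail for general $\phi\in\cC$ because $\phi'(1)$ may be $-\infty$; the paper handles this via an explicit approximation $\phi_n\to\phi$ with $\phi_n'(1)>-\infty$ (\prettyref{lem:C-finite-deriv-approx}). You should at least flag this, since both your weak-duality inequality and the derivation of the obstacle condition from the variational inequality use the IBP identity for test functions that may have this pathology. With those two repairs, the argument is complete; the test family $\chi_a(x)=\min(1-a,1-x)$ is exactly the right choice (it matches the paper's $\psi$ with $\psi''=-\delta_t$), and your derivation of $(\phi^*)^2(\eta^*)''(dx)=dx$ via tightness of AM--GM plus the vanishing of all boundary terms (which follows from testing the optimality condition in the directions $\pm\phi^*$, available because $\cC$ is a cone) is sound.
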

These results are proved in \prettyref{sec:The-Dual-of}. Problems
of the type 
\begin{equation}
GSE=\max_{\eta\in\cK_{h,\xi}}\,D\left(\eta\right)\label{eq:D-is-GSE}
\end{equation}
are called \emph{obstacle problems} and have a rich literature. The
obstacle problem approach to studying variational problems on the
space of measures has become an important tool, see for example \cite{Ser15}. 

Before turning to the analysis of the primal-dual pair $(P,D)$, we
wish to comment briefly on the relationship between our approach to
optimality and that which concerns the primal problem alone. Consider
the first order optimality conditions for the primal problem at $h=0$:
$\phi\in\cC$ is optimal if and only if 
\begin{equation}
(\xi''-\frac{1}{\phi^{2}},\psi-\phi)\geq0\quad\forall\psi\in\cC.\label{eq:primal_opt}
\end{equation}
This variational inequality and others like it play an essential role
in the analysis of Parisi measures (see e.g. \cite{TalSphPF06,AuffChen13,JagTobPD15,ChenSen15}).
In this approach, the difficulty is to prove that a certain function
on $[0,1]$ depending on the choice of measure is minimized on the
support of said measure. (In our work, $dm=-\phi''$ plays the role
of this measure.) The duality between $P$ and $D$, however, suggests
an entirely different approach, namely the simultaneous analysis of
the variables $\phi$ and $\eta$. The optimal pair, $(\phi,\eta)$,
not only achieves the equality $P(\phi)=D(\eta)$, it is characterized
by it. Furthermore, the variational inequality \prettyref{eq:primal_opt}
is implied by this observation. This simultaneous analysis is the
crux of our approach.

We now present an analysis of the optimizers. We begin by discussing
their regularity. 
\begin{thm}
\label{thm:(a-priori-estimates)} Let $\phi\in\cC$ and $\eta\in\cK_{h,\xi}$
be optimal for $P_{\xi}$ and $D$ respectively. Then, 
\begin{enumerate}
\item For $\phi$:

\begin{enumerate}
\item There is a $c>0$ such that $0<c\leq\phi$.
\item $\phi'\in L^\infty$.
\end{enumerate}
\item For $\eta:$

\begin{enumerate}
\item $\eta\in C^{2}([0,1])$
\item $\frac{1}{\sqrt{\eta''}}\in C([0,1])$ , has distributional derivative
$\left(\frac{1}{\sqrt{\eta''}}\right)'\in L^{1}$ which is monotone
decreasing, and has second distributional derivative 
\[
\left(\frac{1}{\sqrt{\eta''}}\right)''=-\mu
\]
for some non-negative Radon measure on $(0,1)$. Furthermore $supp\mu\subset\left\{ \xi=\eta\right\} .$ 
\item On the set $\{\xi=\eta\}$, 

\begin{enumerate}
\item $\eta'(t)=\xi'(t)$
\item $\eta''(t)\geq\xi''(t)$
\end{enumerate}
\item (Natural Boundary Conditions) We have that

\begin{enumerate}
\item $\eta'(1)=\xi'(1)$
\item $\eta(0)=\xi(0)$ or $\phi'(0)=0$
\end{enumerate}
\end{enumerate}
\item For $\mu:$ we have that 
\[
\phi''=\left(\frac{1}{\sqrt{\eta''}}\right)''=-\mu
\]
as elements of $\cD'$. Moreover, $\mu$ is a finite measure.
\end{enumerate}
\end{thm}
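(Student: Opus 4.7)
The plan is to use the duality $\phi^2 \eta''(dx) = dx$ from Theorem 1.7 throughout, proving the items in the order (1a), (2a), (2b)--(2d), and finally (3) together with (1b). For (1a), if $\phi(1) = 0$, concavity of $\phi$ forces $\phi(x) \leq |\phi'(1^-)|(1-x)$ on a left-neighborhood of $1$, so $\int 1/\phi = \infty$ and $P(\phi) = \infty$, contradicting optimality; since $\phi$ is non-increasing, $\phi \geq \phi(1) > 0$. For (2a), decomposing $\eta'' = \eta''_{ac}\,dx + \eta''_{\mathrm{sing}}$, the identity $\phi^2 \eta''(dx) = dx$ with $\phi$ continuous and bounded above and below by positive constants forces $\eta''_{\mathrm{sing}} = 0$ and $\eta''_{ac} = 1/\phi^2 \in C([0,1])$, whence $\eta \in C^2([0,1])$.

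For (2b)--(2d) I would use variational arguments on the dual. Having $\phi = 1/\sqrt{\eta''}$ from (2a), concavity of $\phi$ gives $\phi'' = -\mu$ distributionally for a nonnegative Radon measure $\mu$. To establish $\mathrm{supp}\,\mu \subset \{\eta = \xi\}$, for any open $U \subset \{\eta > \xi\}$ and $v \in C_c^\infty(U)$, the perturbation $\eta + \epsilon v$ lies in $\cK_{h,\xi}$ for small $|\epsilon|$, and dual optimality gives
\[
0 = \frac{d}{d\epsilon}D(\eta + \epsilon v)\Big|_{\epsilon=0} = \int v''\,\phi\,dx = -\int v\,d\mu,
\]
forcing $\mu(U) = 0$. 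For (2c), at an interior $t_0 \in \{\eta = \xi\}$, $\eta - \xi \in C^2$ attains its minimum $0$, so the first and second derivative tests give $\eta'(t_0) = \xi'(t_0)$ and $\eta''(t_0) \geq \xi''(t_0)$. For (2d)(i), $1 \in \{\eta = \xi\}$; either contact accumulates at $1$, and (2c)(i) together with continuity of $\eta'-\xi'$ give $\eta'(1) = \xi'(1)$, or $\eta > \xi$ on $(1-\delta, 1)$ with $\eta'(1) < \xi'(1)$, in which case $\eta - \xi$ is linearly bounded below near $1$ so admissible $v$ with $v(1) = 0$, $v'(0) = 0$ and arbitrary $v'(1)$ exist; two integrations by parts yield $\frac{d}{d\epsilon}D = v'(1)\phi(1) = 0$ for both signs of $v'(1)$, hence $\phi(1) = 0$, contradicting (1a). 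For (2d)(ii), if $\eta(0) > \xi(0)$, a test with $v$ supported near $0$, $v(0)\neq 0$, $v'(0) = 0$, $v(1) = 0$ yields $\frac{d}{d\epsilon}D = v(0)\phi'(0^+) = 0$, so $\phi'(0^+) = 0$.

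For (3) and (1b), the identity $\phi'' = (1/\sqrt{\eta''})'' = -\mu$ is immediate from (2a). Since $\phi'$ is monotone decreasing, $\mu([0,1]) = \phi'(0^+) - \phi'(1^-)$, so both one-sided derivatives must be shown finite. For $\phi'(0^+)$: when $h > 0$, $\eta'(0) = \xi'(0) - h^2$ combined with $\eta \geq \xi$ forces $\eta(0) > \xi(0)$ by Taylor expansion, so $\phi'(0^+) = 0$ via (2d)(ii); for $h = 0$ with $\eta(0) = \xi(0)$, the local structure of $\phi$ near $0$ (affine on components of $\{\eta > \xi\}$, or $\phi = 1/\sqrt{\xi''}$ on contact intervals) gives $\phi'(0^+) > -\infty$. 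For $\phi'(1^-)$: contact accumulates at $1$ by (2d)(i), $\phi \leq 1/\sqrt{\xi''}$ on contact by (2c)(ii), and the piecewise-affine structure of $\phi$ on non-contact components combined with concavity controls $|\phi'|$. The main obstacle is this final Lipschitz bound when the contact set has intricate topology (e.g., an accumulating but non-interval structure); the key comparison is that on any non-contact component $(a,b) \subset (0,1)$ with $a, b$ in contact, concavity of $\phi$ bounds the slope of $\phi|_{(a,b)}$ by the one-sided derivatives of $1/\sqrt{\xi''}$ at neighboring contact points, yielding $\|\phi'\|_{L^\infty} \leq \|(1/\sqrt{\xi''})'\|_\infty$.
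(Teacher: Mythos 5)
Your overall architecture matches the paper's (exploit $\phi^2\eta''(dx)=dx$, bump-function variations of $\eta$ for the support condition, case analysis on the contact set near the endpoints), but your proof of (1a) — on which (2a) and your argument for (2d)(i) both lean — is broken. You claim that $\phi(1)=0$ together with concavity forces $\int 1/\phi=\infty$. The bound $\phi(x)\leq|\phi'(1^-)|(1-x)$ is vacuous when $\phi'(1^-)=-\infty$, and that is exactly the situation you cannot exclude at this stage, since finiteness of $\phi'(1^-)$ is part (1b), proved much later. Concretely, $\phi(x)=\sqrt{1-x}$ lies in $\cC$, vanishes at $1$, and has $\int_0^1\phi^{-1}dx=2<\infty$, so finite energy alone does not rule out $\phi(1)=0$. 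The paper's route is forced to go through the support condition $\mathrm{supp}\,\mu\subset\{\eta=\xi\}$ \emph{first}: this makes $\phi$ affine on any terminal non-contact interval $(a,1)$, and only for an \emph{affine} $\phi$ does $\phi(1)=0$ produce the logarithmic divergence of $\int 1/\phi$; in the complementary case, contact points accumulate at $1$, and the mean value theorem applied to $\eta'-\xi'$ yields points where $\eta''=\xi''$, hence $\phi^2(1)=1/\xi''(1)>0$. Your ordering ((1a) before (2b)) cannot be repaired without importing this structure.

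The second gap is in (1b)/(3). The inequality $\|\phi'\|_{L^\infty}\leq\|(1/\sqrt{\xi''})'\|_\infty$ does not follow from your comparison: at contact points you only know $\eta''\geq\xi''$, i.e.\ $\phi\leq1/\sqrt{\xi''}$, a one-sided bound, so chord slopes of $\phi$ between neighboring contact points are not controlled by those of $1/\sqrt{\xi''}$; moreover the dangerous slope is on the \emph{last} interval, where concavity gives no help from the slopes to its left. Since $\phi\in\cC$ already has $\phi'(0^+)\in\R$ by definition, the whole content of (1b) is $\phi'(1^-)>-\infty$, and the paper's argument is the one to use: if $1$ is an isolated contact point, $\phi$ is affine near $1$ and the slope is trivially finite; if contact accumulates at $1$, two applications of the mean value theorem give a sequence $\tau_i\uparrow1$ with $\eta''(\tau_i)=\xi''(\tau_i)$, whence $\eta'''(1)=\xi'''(1)<\infty$ and $\phi'(1^-)=-\tfrac{1}{2}\eta'''(1)\phi^3(1)$ is finite. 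Your variational arguments for (2b), (2c), and (2d) are essentially the paper's and are fine once the above is fixed.
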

Since the dual problem is an obstacle-type problem, the following
definition is natural. 
\begin{defn}
A point $t\in[0,1]$ such that $\eta(t)=\xi(t)$ is called a \emph{contact
point. }The set of contact points is called the \emph{contact set}.
\end{defn}
We then have the following regularity result concerning the contact
set of this obstacle problem.
\begin{thm}
\label{thm:wiggles-principle}Let 
\[
\mathfrak{\mathfrak{d}}(t)=\left(\frac{1}{\sqrt{\xi''}}\right)''(t).
\]
Then we have the following two cases:
\begin{enumerate}
\item If $\mathfrak{d}(t)>0$ on $\left(a,b\right)$, then there are at
most two contact points in $\left[a,b\right]$.
\item If $\mathfrak{d}(t)\leq0$ on $[a,b]$, then if there are two contact
points $t_{1},t_{2}\in\left[a,b\right]$, then $\left[t_{1},t_{2}\right]\subset\left\{ \eta=\xi\right\} $.
\end{enumerate}
\end{thm}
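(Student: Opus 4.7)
The plan is to attack both cases by combining the regularity from \prettyref{thm:(a-priori-estimates)} with short convexity/concavity arguments on $G:=\eta-\xi$ and on the function $\phi-1/\sqrt{\xi''}$. I use throughout that $\phi=1/\sqrt{\eta''}$ is concave, that $\phi$ is affine on each connected component of $\{\eta>\xi\}$ (since $\mathrm{supp}\,\mu\subset\{\eta=\xi\}$ and $\phi''=-\mu$), and crucially that $\phi(t)\leq 1/\sqrt{\xi''(t)}$ at every contact point $t$ (a rewriting of $\eta''(t)\geq\xi''(t)$ on $\{\eta=\xi\}$, from \prettyref{thm:(a-priori-estimates)}(3)(b)(ii)).

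For part (2) I would argue by contradiction. Suppose $[t_1,t_2]$ is not entirely in the contact set; then there is a maximal open gap $(s_1,s_2)\subset(t_1,t_2)$ with $s_1,s_2$ themselves contact points. On $[s_1,s_2]$ the function $\phi$ is an affine chord with $\phi(s_i)\leq 1/\sqrt{\xi''(s_i)}$, while $1/\sqrt{\xi''}$ is concave on $[a,b]$ since $(1/\sqrt{\xi''})''=\mathfrak{d}\leq 0$. The $\phi$-chord has endpoints no larger than the corresponding chord of $1/\sqrt{\xi''}$, and that latter chord lies below $1/\sqrt{\xi''}$ by concavity, so $\phi\leq 1/\sqrt{\xi''}$ on $[s_1,s_2]$. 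Equivalently, $\eta''\geq\xi''$ on $[s_1,s_2]$, so $G$ is convex there. Combined with $G(s_i)=0$ and $G'(s_i)=0$ (the latter from \prettyref{thm:(a-priori-estimates)}(3)(b)(i)), the convex function $G$ must be identically zero on $[s_1,s_2]$: since $G'$ is non-decreasing and vanishes at both endpoints, $G'\equiv 0$, hence $G\equiv 0$. This contradicts the hypothesis that $(s_1,s_2)$ was a gap.

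For part (1) the plan is to use Rolle's theorem twice to produce two points where $\phi$ equals $1/\sqrt{\xi''}$, and then to use strict concavity of their difference to force a strict positivity at $t_2$ violating the contact-point inequality. Suppose three contact points $t_1<t_2<t_3$ lie in $[a,b]$. Since $G\in C^2$, $G\geq 0$, $G(t_i)=0$, and $G'(t_i)=0$ at each $t_i$, applying Rolle to $G'$ on $[t_1,t_2]$ and on $[t_2,t_3]$ yields $s_1\in(t_1,t_2)$ and $s_2\in(t_2,t_3)$ with $G''(s_i)=0$, i.e.\ $\phi(s_i)=1/\sqrt{\xi''(s_i)}$. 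On $(t_1,t_3)\subset(a,b)$ the hypothesis $\mathfrak{d}>0$ makes $1/\sqrt{\xi''}$ strictly convex; since $\phi$ is concave, $\phi-1/\sqrt{\xi''}$ is strictly concave on $(t_1,t_3)$. Strict concavity together with the two zeros $s_1<s_2$ forces $\phi(t_2)-1/\sqrt{\xi''(t_2)}>0$, because $t_2\in(s_1,s_2)$. But $t_2$ is a contact point, so $\phi(t_2)\leq 1/\sqrt{\xi''(t_2)}$, a contradiction.

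The main obstacle I anticipate is boundary bookkeeping required to justify $G'(t_i)=0$ at every contact point in part (1). In the interior and at $t=1$ this is automatic (at $1$ via the natural boundary condition $\eta'(1)=\xi'(1)$ of \prettyref{thm:(a-priori-estimates)}(4)(a)). At $t=0$, however, the boundary condition $\eta'(0)=\xi'(0)-h^2$ means that a contact point at $0$ forces $h=0$ (otherwise the alternative $\phi'(0)=0$ of (4)(b) must hold and $0$ is not a contact point), and this edge case should be inspected separately. Beyond this, the remaining steps---the chord estimate in (2), the two applications of Rolle in (1), and the passage from distributional to pointwise strict concavity of $\phi-1/\sqrt{\xi''}$---are routine given the regularity already in hand.
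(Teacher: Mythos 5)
Your part (1) is essentially the paper's argument --- Rolle applied to $G'=\eta'-\xi'$ produces two points where $\eta''=\xi''$, and strict concavity of $\phi - 1/\sqrt{\xi''}$ then contradicts $\eta''(t_2)\geq\xi''(t_2)$ --- so there is nothing to compare there. For part (2) you take a genuinely different route, and it is correct. The paper works directly: it establishes $\int_{t_1}^{t_2}\bigl(1/\sqrt{\xi''}-1/\sqrt{\eta''}\bigr)(\xi-\eta)''\geq 0$ by combining an integration by parts of the pointwise bound $(1/\sqrt{\xi''})''(\eta-\xi)\leq 0$ with a first-variation inequality against the comparison function $\tilde\eta$ that equals $\xi$ on $[t_1,t_2]$, and then observes that the integrand is a nonpositive multiple of $(\xi''-\eta'')^2$, forcing $\eta''=\xi''$ a.e.\ on $[t_1,t_2]$. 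You instead argue by contradiction on a maximal gap $(s_1,s_2)\subset(t_1,t_2)$: there $\phi$ is affine, its endpoint values sit at or below the concave function $1/\sqrt{\xi''}$, so the chord estimate gives $\phi\leq 1/\sqrt{\xi''}$ on $[s_1,s_2]$, hence $G=\eta-\xi$ is convex there; with $G=G'=0$ at both endpoints, convexity forces $G\equiv 0$, contradicting the gap. Your version buys elementarity --- it replaces the first-variation computation on $D$ and the integral identity with a one-line chord estimate, using only the regularity already recorded in \prettyref{thm:(a-priori-estimates)} --- at the minor cost of being a gap-by-gap contradiction argument rather than the paper's single global inequality. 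One small correction on your boundary remark: a contact at $0$ does force $h=0$, but not because the alternatives in \prettyref{thm:(a-priori-estimates)}(4)(b) are mutually exclusive; rather, $\eta\geq\xi$ together with $\eta(0)=\xi(0)$ gives $\eta'(0)\geq\xi'(0)$, while $\eta'(0)=\xi'(0)-h^2$ by definition of $\cK_{h,\xi}$, whence $h=0$ and $G'(0)=0$.
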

\begin{rem}
The finite temperature analogue of part 1 of this result can be seen
in \cite{CriLeu04} as observed in \cite{TalSphPF06}.
\end{rem}
This result provides us with a systematic dimension reduction which
reduces the analysis of \prettyref{eq:P-GSE} to a finite dimensional
optimization problem. Rather than describing this at the level of
generalities, we prefer to illustrate these ideas through three examples.
In each, we think of building up the ansatz on connected components
of the sets 
\[
N=\{t\in[0,1]:\mathfrak{d}\leq0\}\quad\text{and}\quad P=\{t\in[0,1]:\mathfrak{d}\geq0\}.
\]
For simplicity, we work with $h=0$.
\begin{exam}\label{exam:4wiggles}
Let $\xi(t)=\frac{300}{601}t^{2}+\frac{200}{601}t^{4}+\frac{100}{601}t^{15}+\frac{1}{601}t^{60}$.
The corresponding $\mathfrak{d}$ is depicted in \prettyref{fig:4wiggles}.
\begin{figure}
\includegraphics[width=0.4\textwidth]{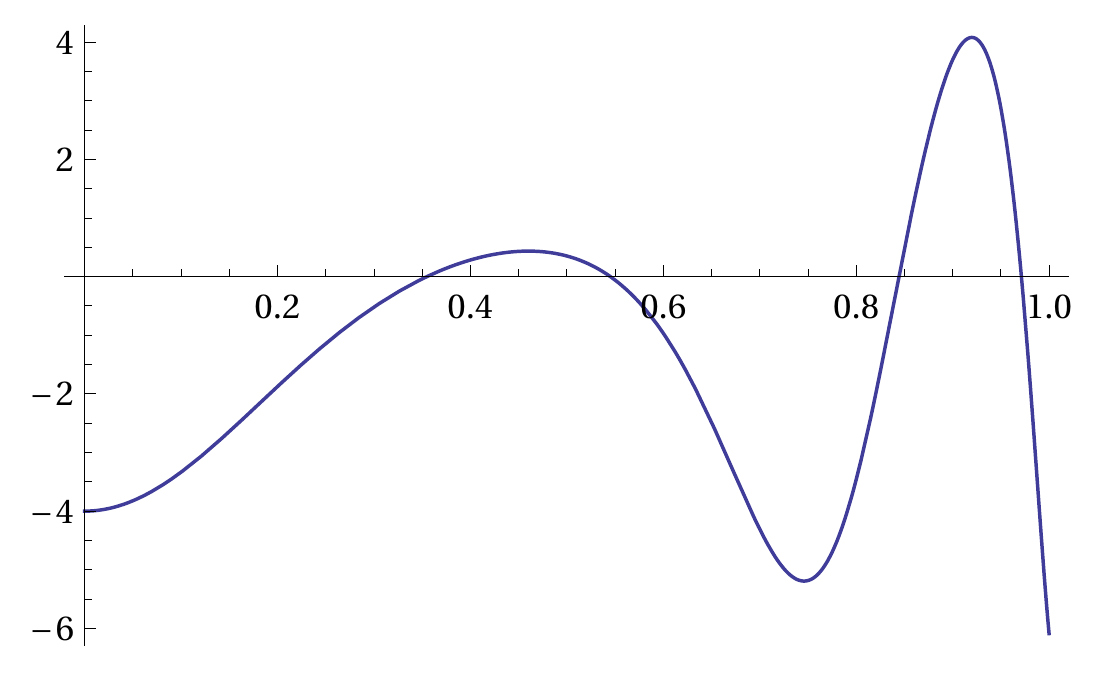}\caption{This plot shows the function $\mathfrak{d}$ from \prettyref{thm:wiggles-principle} for the model $\xi(t)=\frac{300}{601}t^{2}+\frac{200}{601}t^{4}+\frac{100}{601}t^{15}+\frac{1}{601}t^{60}$. The intervals on which $\mathfrak{d}$ is negative are compatible with FRSB. The intervals on which $\mathfrak{d}$ is positive permit at most two atoms each. As shown in \prettyref{exam:4wiggles}, this can be used to reduce the primal problem to a certain finite dimensional optimization problem, that depends on the particular arrangement of these intervals.}
\label{fig:4wiggles}
\end{figure}
 Evidently, $\mathfrak{d}(t)=0$ has exactly four solutions in $[0,1]$,
which we label as $0<r_{1}<r_{2}<r_{3}<r_{4}<1$. We call $r_{0}=0$
and $r_{5}=1$. Now $N=[r_{0},r_{1}]\cup[r_{2},r_{3}]\cup[r_{4},r_{5}]$
and $P=[r_{1},r_{2}]\cup[r_{3},r_{4}]$. By \prettyref{thm:wiggles-principle},
each component of $N$ can intersect the contact set in at most one
closed interval (possibly degenerate). Each component of $P$ can
intersect the contact set in at most two points. This motivates the
following construction. Let $q_{i,j}\in[r_{i-1},r_{i}]$ and $m_{i,j}\in[0,\infty)$
for $i=1,\dots,4$, $j=1,2$. Assume that $q_{i,1}\leq q_{i,2}$ for
all $i$. Then, by the results above, the optimal $\phi$ must be
of the form $\phi=\int_{t}^{1}\mu[0,s]ds+c$ where 
\[
\mu=\sum_{i}m_{i,1}\delta_{q_{i,1}}+m_{i,2}\delta_{q_{i,2}}-\sum_{i:I_{i}\subset N}\mathfrak{d}\cdot\indicator{[q_{i,1},q_{i,2}]}dx
\]
and $c\in(0,\infty)$. In fact, $q_{1,1}=0$ by the same argument
as in the proof of \prettyref{lem:atomatzero}. 
\end{exam}
The next examples are comparatively straightforward. The reader will
observe that the key simplification comes from the fact that either
$N=\emptyset$ or $P=\emptyset$.
\begin{exam}
Let $\xi(t)=\sinh(t)$. Then $\mathfrak{d}(t)=8(5+\cosh2t)(\sinh t)^{-5/2}$
and $P=[0,1]$. Hence, by \prettyref{thm:wiggles-principle}, there
can be at most two contact points in $[0,1]$. We already know that
$1$ is a contact point, and by the same argument as in the proof
of \prettyref{lem:atomatzero} we can show that $0$ must be a contact
point as well. Therefore the optimal $\phi$ must be of the form $\phi=m(1-t)+c$
where $m,c>0$. (Since $\xi\neq\xi_{RS}$ we know that $m\neq0$ by
\prettyref{lem:RSiffSK}.)
\end{exam}

\begin{exam}
Let $\xi(t)=\frac{14}{15}t^{2}+\frac{1}{15}t^{4}$. Then $\mathfrak{d}(t)=\frac{3\sqrt{15}}{2}(6t^{2}-7)(3t^{2}+7)^{-5/2}$
and $N=[0,1]$. Hence, by \prettyref{thm:wiggles-principle}, if there
are two contact points $a,b\in[0,1]$ then $[a,b]\subset\{\eta=\xi\}$.
Now $1$ is a contact point, and by the same argument as in the proof
of \prettyref{lem:atomatzero} we can show that $0$ must be a contact
point as well. Therefore the optimal $\eta$ must be $\eta=\xi$.
\end{exam}
The proofs of these results are given in \prettyref{sec:The-Dual-of}
and \prettyref{sec:Regularity-theory-for}.

\subsection{Application to the Analysis of Phase Transitions}

The notions of RS, RSB, kRSB, and FRSB have natural extensions to
$\beta=\infty$. In this section, we define these extensions and apply
the methods of the previous section to study the 1RSB class in detail.
In the case of $2+p$ models, we characterize 1RSB exactly. For simplicity,
we will \textbf{assume} that $h=0$ throughout the remainder of the introduction.
We use the shorthand $P_{\xi}=P_{0,\xi}$ and $\cK_{\xi}=\cK_{0,\xi}$.

\subsubsection{Definition of kRSB}

The following is an elementary consequence of the $\Gamma$-convergence.
\begin{lem}
Let $(\beta,h,\xi_{\beta})$ satisfy the assumptions of \prettyref{thm:gamma-conv-min-conv},
and assume that $\frac{h}{\beta}=o_{1}(\beta)$. Suppose that there
are $k\in\N$, $\beta_{c}\in\R_{+}$ such that for all $\beta\geq\beta_{c}$,
the minimizer $\mu_{\beta}$ of $\cP_{\beta,h,\xi_{\beta}}$ is $k$
atomic. Then the minimizer $\nu=m(t)dt+c\delta_{1}$ of $GS_{0,\xi}$
is such that $dm$ is at most $k-1$ atomic on $[0,1)$.
\end{lem}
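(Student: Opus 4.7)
The plan is to combine the $\Gamma$-convergence of \prettyref{thm:gamma-conv-min-conv} with the explicit $k$-atomic ansatz for $\mu_\beta$, pass to the weak-$*$ limit, and read off the structure of $\nu$. Since $h/\beta\to 0$, \prettyref{thm:gamma-conv-min-conv} yields $\nu_\beta\stackrel{w-*}{\longrightarrow}\nu$ in $\cA$, where $\nu_\beta\in X_\beta$ is the unique minimizer of $F_{\beta,h,\xi_\beta}$ and $\nu=m(t)\,dt+c\delta_1$ is the unique minimizer of $GS_{0,\xi}$. Writing $\mu_\beta=\sum_{i=1}^{k} a_{i,\beta}\delta_{q_{i,\beta}}$ with $0\le q_{1,\beta}<\dots<q_{k,\beta}\le 1$ and $a_{i,\beta}>0$, the density of $\nu_\beta$ is the non-decreasing step function $m_\beta(t)=\beta\mu_\beta[0,t]$, with a jump of size $s_{i,\beta}:=\beta a_{i,\beta}$ at $q_{i,\beta}$.

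Next I would pass to a subsequence along which $q_{i,\beta}\to q_i^*\in[0,1]$ and $s_{i,\beta}\to s_i^*\in[0,\infty]$ for every $i$. Total-mass considerations force the largest atom to escape to $1$: if instead $q_{k,\beta}\le 1-\delta$ along a further subsequence for some $\delta>0$, then $m_\beta\equiv\beta$ on $[q_{k,\beta},1]$, so pairing $\nu_\beta$ against a non-negative continuous cutoff supported in $[1-\delta,1]$ produces integrals tending to $+\infty$, contradicting weak-$*$ convergence to the finite measure $\nu$. The same blowup idea applies to any $i$ with $q_i^*<1$: since $m$ is non-decreasing and in $L^1$, it is finite on $[0,1)$, while $m_\beta\ge s_{i,\beta}$ on $[q_i^*+\epsilon,1]$ for all small $\epsilon>0$ and $\beta$ large; hence $s_i^*<\infty$. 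Setting $\mathcal{I}:=\{i:q_i^*<1\}$, this shows $|\mathcal{I}|\le k-1$.

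Finally, I would identify the limit by expanding, for any $f\in C([0,1])$,
\[
\int f\,d\nu_\beta = \sum_{i=1}^{k} s_{i,\beta}\int_{q_{i,\beta}}^{1} f(t)\,dt.
\]
For $i\in\mathcal{I}$ the term converges to $s_i^*\int_{q_i^*}^{1} f(t)\,dt$. For $i\notin\mathcal{I}$, the integral mean value theorem writes the term as $s_{i,\beta}(1-q_{i,\beta})f(\xi_{i,\beta})$ with $\xi_{i,\beta}\in(q_{i,\beta},1)$; testing with $f\equiv 1$ (and subtracting the $\mathcal{I}$-contribution) shows $\sum_{i\notin\mathcal{I}}s_{i,\beta}(1-q_{i,\beta})\to c$ for some $c\ge 0$, and continuity of $f$ at $1$ then gives that this group converges to $c\,f(1)$. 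The weak-$*$ limit is therefore $\tilde m(t)\,dt+c\,\delta_1$ with $\tilde m(t):=\sum_{i\in\mathcal{I}}s_i^*\,\indicator{q_i^*\le t}$, and uniqueness of the minimizer forces $m=\tilde m$ (as the cadlag representative). Since $\tilde m$ is a non-decreasing step function with at most $|\mathcal{I}|\le k-1$ distinct jumps in $[0,1)$, the measure $dm$ is at most $(k-1)$-atomic on $[0,1)$.

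The main obstacle is to guarantee that atoms $q_{i,\beta}\to 1$ with possibly $s_{i,\beta}\to\infty$ contribute \emph{only} to the point mass $c\,\delta_1$ and not to the density on $[0,1)$. This is precisely where the count drops from $k$ to $k-1$: at least the largest atom $q_{k,\beta}$ must escape to $1$ on account of total mass, and the escape-to-$1$ atoms are invisible to $m$ in the limit.
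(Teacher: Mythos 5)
Your proof is correct and is essentially the argument the paper has in mind; the paper states this lemma with only the remark that it is ``an elementary consequence of the $\Gamma$-convergence,'' and your write-up supplies exactly the missing details: pass to a subsequence, show the top atom escapes to $1$ by a mass-blowup argument, show the rescaled weights of non-escaping atoms stay bounded by testing against bump functions supported in $[0,1)$, and identify the limit as a step-function density with at most $k-1$ jumps plus a point mass at $1$. One small spot worth tightening is the claim that $s_i^*<\infty$ for $i\in\mathcal{I}$: the phrase ``since $m$ is non-decreasing and in $L^1$'' does not by itself control $s_{i,\beta}$, and you should instead argue as you did for $q_{k,\beta}$ (test against a bump function supported in $(q_i^*+\epsilon,\,q_i^*+2\epsilon)\subset[0,1)$), or simply invoke \prettyref{lem:cdfs-conv}, which gives $\beta\mu_\beta[0,t]\to m(t)<\infty$ at continuity points $t\in[0,1)$ and hence bounds $s_{i,\beta}$. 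With that clarification the argument is complete.
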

With this and the correspondence $\cA\leftrightarrow\cC$ in mind,
we define for each $k\in\N$ the set 
\[
RSB_{k}=\{\phi\in\cC:dm\ \text{is}\ k-\text{atomic on}\ [0,1)\}
\]
and, with slight abuse of notation, we call $kRSB$ the set of models
such that the optimal $\phi$ for $P_{\xi}$ is in $RSB_{k}$. Similarly,
we call $RS$ the set of models such that the optimal $\phi$ is constant.
We call $RSB$ the complement of this, and we call $FRSB$ the region
where the optimal $dm$ is neither zero nor $k$-atomic for any $k\in\N$. 

In the ground state problem at zero external field, RS is particularly
simple.
\begin{lem}
\label{lem:RSiffSK} $RS=\{\xi_{SK}\}$.\end{lem}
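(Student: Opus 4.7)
The plan is to prove both inclusions, leveraging the duality \prettyref{thm:duality}. The reverse inclusion is a direct verification; the forward inclusion, $RS \subseteq \{\xi_{SK}\}$, is where the substance lies, and it follows almost immediately from the boundary conditions identified in \prettyref{thm:(a-priori-estimates)}.

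First I would suppose that the optimizer $\phi \in \cC$ for $P_\xi$ is a constant, $\phi \equiv c$; note that $c > 0$ by \prettyref{thm:(a-priori-estimates)} (1a). Since $\phi'' \equiv 0$, the measure $\mu$ in \prettyref{thm:(a-priori-estimates)} (3) vanishes, and the duality relation $\phi^2 \eta''(dx) = dx$ from \prettyref{thm:duality} forces $\eta''(dx) = c^{-2}\,dx$. Thus $\eta$ is a quadratic polynomial on $[0,1]$. Its two remaining coefficients are pinned down by the imposed boundary condition $\eta'(0) = \xi'(0) - h^2 = 0$ (using $h = 0$ and $\xi'(0) = 0$) together with the natural boundary condition $\eta'(1) = \xi'(1)$ from \prettyref{thm:(a-priori-estimates)} (2d). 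These give $c = 1/\sqrt{\xi'(1)}$ and
\[
\eta(t) = \frac{\xi'(1)}{2}\,t^2 + \xi(1) - \frac{\xi'(1)}{2}.
\]

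Next I would invoke the obstacle constraint $\eta \geq \xi$, evaluated at $t = 0$. Since $\phi'(0) = 0$ automatically, the natural boundary condition \prettyref{thm:(a-priori-estimates)} (2d)(b) is satisfied in its second alternative and places no further restriction on $\eta(0)$. Hence the single inequality $\eta(0) \geq \xi(0) = 0$ becomes
\[
0 \;\leq\; \xi(1) - \tfrac{1}{2}\xi'(1) \;=\; \sum_{p \geq 2} \frac{2-p}{2}\,\beta_p^2.
\]
The $p=2$ term vanishes while every $p \geq 3$ term is strictly negative unless $\beta_p = 0$. This forces $\beta_p = 0$ for all $p \geq 3$, i.e.\ $\xi(t) = \beta_2^2 t^2 = \xi_{SK}$.

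For the converse, I would verify directly that for $\xi = \xi_{SK} = \beta_2^2 t^2$ the pair $\phi \equiv (2\beta_2^2)^{-1/2}$ and $\eta = \xi$ lies in $\cC \times \cK_\xi$, satisfies $\phi^2 \eta''(dx) = dx$, and gives $P_\xi(\phi) = 2\sqrt{2}\,\beta_2 = D(\eta)$; by \prettyref{thm:duality} this pair is optimal and so $\xi_{SK} \in RS$. The whole argument is essentially one line once the duality is in place; the only real technical point is ensuring that the two boundary conditions (imposed at $0$, natural at $1$) are enough to fully determine the quadratic $\eta$, leaving $\eta(0)$ as a single scalar constraint that encodes the obstruction to any non-SK model lying in $RS$.
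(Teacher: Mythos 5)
Your proof is correct and follows essentially the same route as the paper: use the duality relation $\phi^2\eta''(dx)=dx$ together with the boundary conditions from \prettyref{thm:(a-priori-estimates)} to pin down $\eta$ as the explicit quadratic $\xi(1)+\xi'(1)\frac{t^2-1}{2}$, then extract a scalar inequality that forces $\beta_p=0$ for $p\geq 3$. The only (harmless) difference is the final inequality invoked: the paper uses the contact condition $\eta''(1)\geq\xi''(1)$ at $t=1$, giving $\xi'(1)\geq\xi''(1)$, whereas you use the obstacle condition $\eta(0)\geq\xi(0)$, giving $\xi(1)\geq\tfrac12\xi'(1)$ --- both are immediate from the same theorems and both rule out every $p\geq 3$ term.
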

\begin{proof}
Applying the natural boundary conditions from \prettyref{thm:(a-priori-estimates)}
to $\eta$, we see that $\phi=c$ yields $c^{2}=\frac{1}{\xi'(1)}$,
so that $\eta$ must be of the form 
\begin{equation}
\eta(t)=\xi(1)+\xi'(1)\frac{t^{2}-1}{2}.\label{eq:eta_SK}
\end{equation}
By the same regularity theorem, $\eta''(1)\geq\xi''(1)$ so that $\xi'(1)\geq\xi''(1).$
This implies that $\xi=\xi_{SK}$. On the other hand, if $\xi=\xi_{SK}$,
then $\eta$ given by\prettyref{eq:eta_SK} is in $\cK_{h,\xi}$ and
maximizes $D$ so that the optimal $\phi$ is constant.
\end{proof}
The next result establishes the existence of an ``atom at zero''
at $\beta=\infty$.
\begin{lem}
\label{lem:atomatzero} If $\xi\in kRSB$, then the optimal $\phi$
for $P_{\xi}$ satisfies $\phi'(0)>0$. Equivalently, $dm$ has an
atom at zero.\end{lem}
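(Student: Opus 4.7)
The plan is to argue by contradiction. Assume the optimal $\phi$ is flat at the origin, i.e.\ $\phi'(0) = 0$ (so $dm$ has no atom at $0$), while $\xi \in kRSB$ for some $k \geq 1$. Then $\mu = -\phi''$ is $k$-atomic on $[0,1)$ with $k \geq 1$ atoms, all of which then lie in $(0,1)$; let $q_1 \in (0,1)$ denote the smallest one. By the support property $\mathrm{supp}\,\mu \subset \{\xi = \eta\}$ from \prettyref{thm:(a-priori-estimates)}(2b), $q_1$ is a contact point. On the interval $[0, q_1)$ the distribution $\phi'' = -\mu$ vanishes, so $\phi$ is affine there; combined with $\phi'(0) = 0$, this forces $\phi \equiv \phi_0$ for some constant $\phi_0 > 0$ on $[0, q_1]$ (positivity from \prettyref{thm:(a-priori-estimates)}(1a)).

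The complementary slackness $\phi^2 \eta''(dx) = dx$ from \prettyref{thm:duality} then gives $\eta''(dx) = \phi_0^{-2}\,dx$ on $[0, q_1)$; in particular $\eta''$ is absolutely continuous with constant density there. Integrating and using the natural boundary condition $\eta'(0) = \xi'(0) - h^2 = 0$ (recall $h = 0$ and $\xi'(0) = 0$), and then integrating once more, gives
\[
\eta'(t) = \frac{t}{\phi_0^2}, \qquad \eta(t) = \eta(0) + \frac{t^2}{2\phi_0^2}, \qquad t \in [0, q_1].
\]
Matching the endpoint contact conditions $\eta(q_1) = \xi(q_1)$ and $\eta'(q_1) = \xi'(q_1)$ from \prettyref{thm:(a-priori-estimates)}(2c) first determines $\phi_0^2 = q_1/\xi'(q_1)$ and then
\[
\eta(0) = \xi(q_1) - \tfrac{1}{2}\, q_1 \xi'(q_1).
\]

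Expanding via $\xi(t) = \sum_{p \geq 2} \beta_p^2 t^p$ collapses this to
\[
\eta(0) = -\sum_{p \geq 3} \frac{p-2}{2}\, \beta_p^2\, q_1^p.
\]
Since $\xi \in kRSB \subset RSB$, \prettyref{lem:RSiffSK} gives $\xi \neq \xi_{SK}$, so at least one coefficient $\beta_p$ with $p \geq 3$ is nonzero and the right-hand side is strictly negative. Hence $\eta(0) < 0 = \xi(0)$, which contradicts the obstacle constraint $\eta \geq \xi$ built into the definition of $\cK_{h,\xi}$. This contradiction proves the lemma. The main obstacle is conceptual rather than computational: one must recognize that $\phi'(0) = 0$ combined with the absence of atoms of $\mu$ near zero forces a fully explicit quadratic form for $\eta$ on $[0, q_1]$, after which the model-structural inequality $\xi(q_1) < \tfrac{1}{2} q_1 \xi'(q_1)$ (which holds precisely because $\xi$ has some pure term of degree $\geq 3$) closes the argument in one line.
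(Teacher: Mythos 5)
Your proof is correct and follows essentially the same route as the paper: assume $\phi'(0)=0$, take the first contact point $q_1$, and deduce an explicit quadratic form for $\eta$ on $[0,q_1]$ from the conjugacy $\phi^2\eta''=1$ together with $\eta'(0)=\xi'(0)$, $\eta(q_1)=\xi(q_1)$, $\eta'(q_1)=\xi'(q_1)$. The only difference is which optimality condition closes the argument: the paper reads off the second-order contact condition $\eta''(q_1)\geq\xi''(q_1)$ from \prettyref{thm:(a-priori-estimates)}, whereas you invoke the more primitive obstacle constraint $\eta(0)\geq\xi(0)=0$ built into $\cK_{h,\xi}$; both reduce to the vanishing of $\sum_{p\geq 3}\beta_p^2 q_1^p$ up to positive weights, so the two endings are algebraically equivalent, with yours arguably a touch more elementary since it does not require the derived regularity at contact points.
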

\begin{proof}
Suppose that $q\in\left(0,1\right)$ is an atom of $dm$, $\phi''=0$
in $\left(0,q\right)$, and $\phi'\left(0\right)=0$. Then $q$ is
a contact point of $\eta$, and $\phi$ is constant on $(0,q)$. Hence,
by \prettyref{thm:(a-priori-estimates)}, the optimal $\eta$ is of
the form $\eta\left(t\right)=a+bt+ct^{2}$ for $t\in\left(0,q\right)$.
By that same theorem, $\eta'(0)=\xi'(0)$, $\eta\left(q\right)=\xi\left(q\right)$,
and $\eta'\left(q\right)=\xi'\left(q\right)$. Thus,
\[
\eta(t)=\xi(q)-\frac{\xi'(q)q}{2}+\frac{\xi'(q)}{2q}t^{2}\quad t\in(0,q).
\]
However, since $q$ is a contact point, $\xi''\left(q\right)\leq\eta''\left(q\right)$
and hence $\xi''(q)\leq\frac{\xi'(q)}{q}.$ This can only happen if
$\xi=\xi_{SK}$.
\end{proof}

\subsubsection{Characterization of 1RSB \label{sub:Characterization-of-1RSB}}

In this section, we will study the special case that the minimizer
is 1RSB. We re-define $RSB_{1}$ to be 

\begin{equation}
RSB_{1}=\left\{ \phi\ :\ \phi\left(t\right)=c+m\left(1-t\right),\ m,c\in(0,\infty)\right\} .\label{eq:RSB1}
\end{equation}
This slight abuse of notation is justified by \prettyref{lem:atomatzero}
since we are at zero external field. We will refer only to this definition
for this section.

The 1RSB region has been studied in the physics and mathematics literature
through different techniques. This lead to different proposed characterizations
of the 1RSB region. Auffinger and Ben Arous conjectured a criterion
for when a model should be $1RSB$, which they call \emph{pure-like}
\cite{ABA13}. Let $\nu=\xi(1)=1$, $\nu'=\xi'(1)$ and $\nu''=\xi''(1)$,
and define 
\begin{equation}
ABA(\xi)=ABA(\nu',\nu'')=\log\left(\frac{\nu''}{\nu'}\right)-\left(\frac{(\nu''-\nu')(\nu''-\nu'+\nu'^{2})}{\nu''\nu'^{2}}\right).\label{eq:ABA-func}
\end{equation}

\begin{defn}
A model $\xi$ is called \emph{pure-like} if $ABA>0$, \emph{critical}
if $ABA=0$, and \emph{full-like} if $ABA<0$.
\end{defn}
Separately, Crisanti and Leuzzi \cite{CriLeu04} predicted that the
model is 1RSB provided the 1RSB Replicon Eigenvalue is positive. To
define this, we introduce for $\phi\in\cC$ the \emph{formal conjugate}
\begin{equation}
\eta\left(t\right)=\xi\left(1\right)-R\left(t\right)\label{eq:formal-conj-1}
\end{equation}
where
\begin{equation}
R\left(t\right)=\int_{t}^{1}\int_{0}^{s}\frac{1}{\phi^{2}\left(\tau\right)}d\tau ds.\label{eq:R}
\end{equation}
(In the case that $h\neq0$, this formula would have an extra term.)
\begin{defn}
\label{def:repliconeigenvalue}Let $\xi\neq\xi_{SK}$ and let $m,c>0$
solve \eqref{eq:FP1RSB}. Let $\eta$ be the formal conjugate to $\phi=m(1-t)+c$.
The quantity $\eta''(0)-\xi''(0)$ is called the \textbf{1RSB Replicon
eigenvalue }or simply the \textbf{replicon eigenvalue}.\textbf{ }
\end{defn}
A consequence of \prettyref{thm:(a-priori-estimates)} is that in
order for a model to be 1RSB, it must be both pure-like and have positive
replicon eigenvalue.
\begin{cor}
\label{cor:consistency1RSB}Suppose that the optimal $\phi\in RSB_{1}$.
Then necessarily,
\begin{enumerate}
\item $m,c$ solve \eqref{eq:FP1RSB};
\item The model is pure-like or critical: $\eta''\left(1\right)\geq\xi''\left(1\right)$;
\item The replicon eigenvalue is non-negative: $\eta''\left(0\right)\geq\xi''\left(0\right)$.
\end{enumerate}
\end{cor}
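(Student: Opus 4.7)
The plan is to exploit the duality \prettyref{thm:duality} together with the regularity theorem \prettyref{thm:(a-priori-estimates)}, applied to the explicit ansatz $\phi(t) = c + m(1-t)$ with $c,m > 0$.

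First I would construct the companion dual variable $\eta$. By \prettyref{thm:duality}, if $\phi$ minimizes $P_{\xi}$ then the maximizer $\eta \in \cK_{0,\xi}$ of $D$ satisfies $\phi^{2}\eta''(dx) = dx$ as measures on $[0,1]$. Since $\phi \geq c > 0$, this forces $\eta''$ to be absolutely continuous with density $1/\phi(t)^{2} = 1/(c+m(1-t))^{2}$. Integrating twice recovers the formal conjugate $\eta$ of \eqref{eq:formal-conj-1}, determined up to two constants of integration.

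Next I would assemble the boundary conditions on $\eta$. The definition of $\cK_{0,\xi}$ in \eqref{eq:K-xi} enforces $\eta(1) = \xi(1)$ and $\eta'(0) = \xi'(0) = 0$. The natural boundary conditions of \prettyref{thm:(a-priori-estimates)}(2d) add $\eta'(1) = \xi'(1)$ automatically, and also $\eta(0) = \xi(0) = 0$, since $\phi'(0) = -m \neq 0$. Using $\eta'(0) = 0$ and $\eta(0) = 0$ to eliminate the two constants of integration, the remaining conditions $\eta'(1) = \xi'(1)$ and $\eta(1) = \xi(1)$ reduce to a system of two equations in the pair $(c,m)$ alone. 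A short computation of the primitives of $1/(c+m(1-t))^{2}$ identifies these with the fixed-point system \eqref{eq:FP1RSB}, proving (1).

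For (2) and (3), I would observe that the preceding step makes both $t=0$ and $t=1$ contact points of $\eta$: the endpoint $t=1$ lies in $\{\eta=\xi\}$ by the definition of $\cK_{0,\xi}$, and $t=0$ is forced to be by the natural boundary condition $\eta(0) = \xi(0)$ just established. Part (2c)(ii) of \prettyref{thm:(a-priori-estimates)} then gives $\eta''(t) \geq \xi''(t)$ at every contact point; specializing to $t=1$ yields (2) and to $t=0$ yields (3). No single step is delicate here; the corollary is a clean consequence of the regularity theorem once one checks that the natural boundary condition at the left endpoint genuinely applies, which reduces to the observation that the $RSB_{1}$ ansatz has $\phi'(0) = -m \neq 0$.
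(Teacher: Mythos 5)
Your proposal is correct and follows essentially the same route as the paper: use the duality relation $\phi^{2}\eta''=dx$ to identify $\eta$ as the formal conjugate, invoke the natural boundary conditions (noting $\phi'(0)=-m\neq0$) to recover the system \eqref{eq:FP1RSB} via \prettyref{fact:m,c}, and then apply the contact-point inequality $\eta''\geq\xi''$ at $t=0$ and $t=1$. The only difference is that you spell out the computations the paper delegates to \prettyref{fact:m,c} and \prettyref{lem:meas-BVP-unique}.
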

\begin{rem}
Let $(\beta,h,\xi_{\beta})$ satisfy the assumptions of \prettyref{thm:gamma-conv-min-conv},
and assume that $\frac{h}{\beta}=o_{1}(\beta)$. Then as a consequence of this result and the convergence of minimizers
from the $\Gamma$-convergence, we see that both conditions (2) and (3) must be met for $(\beta,h,\xi_\beta)$ to
be 1RSB for large $\beta$.
\end{rem}
That being said, we find that in full generality, neither of these conditions are themselves sufficient for optimality. For example,
models of the form $\xi(t)=\mu t^{2}+(1-\mu)t^{4}$ are pure-like
provided $\mu<\mu_{c}\approx.786444$. However, for the choice $\mu=.7,$
the formal conjugate $\eta$ to $\phi=m(1-t)+c$, where $m,c$ solve \eqref{eq:FP1RSB}, satisfies $\eta(t)-\xi(t)<0$
for $t<.4$. Hence, this model is not 1RSB. Similarly, a model
satisfying $\xi''\left(0\right)=0$ always has non-negative replicon
eigenvalue. Therefore it suffices to find a model with no $p=2$ term
which is full-like. As described in \cite[Fig.\ 2]{ABA13}, there
exists $\mu\in(0,1)$ such that $\xi(t)=\mu t^{4}+(1-\mu)t^{30}$
is full-like. 

The analysis of 1RSB in the specific case of $2+p$ models is of particular
interest to the spin glass community. These are models of the form
$\xi(t)=\mu t^{2}+(1-\mu)t^{p}$, $\mu\in[0,1]$. In this setting,
we resolve the 1RSB conjectures. 
\begin{thm}
\label{thm:CL-ABA-2-p} Let $\xi$ be a $2+p$ model other than $\xi_{SK}$.
Then $\xi\in1RSB_{\infty}$ if and only if both the replicon eigenvalue
is non-negative and the model is pure-like or critical.
\end{thm}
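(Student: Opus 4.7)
The forward implication is immediate from Corollary \ref{cor:consistency1RSB}, so the plan is to prove the converse via the duality of Theorem \ref{thm:duality}. Given a $2+p$ model $\xi \neq \xi_{SK}$ satisfying both conditions, I will solve \eqref{eq:FP1RSB} for $m, c > 0$, set up the 1RSB candidate $\phi(t) = m(1-t) + c$ and its formal conjugate $\eta$ from \eqref{eq:formal-conj-1}, and verify $\eta \in \cK_{\xi}$. Once this is done, an integration by parts using $\eta'' = 1/\phi^2$ gives $P(\phi) = D(\eta)$, and Theorem \ref{thm:duality} then forces the pair to be optimal, placing $\xi \in 1RSB$.

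The only non-trivial step is to show $f := \eta - \xi \geq 0$ on $[0,1]$. From the natural boundary conditions (Theorem \ref{thm:(a-priori-estimates)}) together with \eqref{eq:FP1RSB} one has $f(0) = f(1) = 0$ and $f'(0) = f'(1) = 0$; the replicon and pure-like/critical hypotheses give $f''(0) \geq 0$ and $f''(1) \geq 0$. Granted these, if $f''$ has the sign pattern $(+,-,+)$ on $[0,1]$ with sign changes at some $0 < a < b < 1$, then $f \geq 0$ follows by a simple monotonicity argument: setting $F := f'$, we have $F(0) = F(1) = 0$, $F$ is non-decreasing on $[0,a]$, non-increasing on $[a,b]$, and non-decreasing on $[b,1]$, so $F(a) \geq 0 \geq F(b)$ and by continuity $F$ has a zero $t_0 \in [a,b]$ with $F \geq 0$ on $[0,t_0]$ and $F \leq 0$ on $[t_0,1]$. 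Then $f(0) = 0$ with $f$ non-decreasing on $[0,t_0]$ gives $f \geq 0$ there, and $f(1) = 0$ with $f$ non-increasing on $[t_0,1]$ gives $f \geq 0$ there as well.

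The main obstacle is thus to establish the $(+,-,+)$ sign pattern for $f''$, equivalently a three-piece decreasing--increasing--decreasing shape for $P(t) := \phi^2(t)\xi''(t)$. This is where the two-monomial form of $\xi$ enters crucially. A direct computation using $\phi' = -m$ yields
\[
\frac{P'(t)}{\phi(t)} = -2m\xi''(t) + \phi(t)\xi'''(t) = -4m\mu + (p-2)b(m+c)\,t^{p-3} - pmb\,t^{p-2},
\]
where $b = (1-\mu)p(p-1)$, and this function is unimodal on $t \geq 0$ (rising from $-4m\mu < 0$ to a single peak, then decreasing). Hence $P'$ has at most two zeros on $[0,1]$, and since $P'(0) < 0$ strictly, $P$ has at most three monotone pieces beginning with a decreasing one. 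The shapes with fewer pieces would force $P \leq 1$ globally and hence $f'' \geq 0$; combined with $\int_0^1 f''\,dt = 0$ (which holds since $f'(0) = f'(1) = 0$), this would force $f'' \equiv 0$, i.e.\ $\phi^2\xi'' \equiv 1$, which a coefficient comparison rules out when $\phi$ is affine and $\xi \neq \xi_{SK}$ is $2+p$. What remains is exactly the three-piece shape with $P(t_{\max}) > 1$, giving the desired $(+,-,+)$ sign pattern of $f''$ and closing the proof. The argument breaks for general $\xi$ because $P$ can then have many more critical points and $f''$ many more sign changes, which is the mechanism behind the counterexamples discussed after Corollary \ref{cor:consistency1RSB}.
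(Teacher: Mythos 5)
Your plan mirrors the paper's: reduce to showing the obstacle condition $\eta\geq\xi$ for the 1RSB ansatz, then control the sign changes of $f''=\eta''-\xi''$ and exploit the four boundary conditions $f(0)=f(1)=0$, $f'(0)=f'(1)=0$ together with the hypotheses $f''(0),f''(1)\geq0$. Where you genuinely diverge from the paper is in how the sign-change bound is obtained: the paper rewrites $1-\phi^2\xi''$ as a polynomial $Z$ and invokes Descartes' rule of signs to show it has at most two zeros in $(0,1)$, whereas you differentiate $P=\phi^2\xi''$ and observe that $P'/\phi$ is a two-term-plus-constant expression in $t$ whose derivative factors cleanly, giving unimodality. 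Your route is arguably more transparent and your closing monotonicity argument with $F=f'$ is a tidy alternative to the paper's piecewise convexity/concavity reasoning. Both prove the ``if'' direction.

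There is, however, a small but real gap in your case analysis. You assert that $P'(0)<0$ strictly, so that $P$ has at most three monotone pieces ``beginning with a decreasing one.'' This is false for $\mu=0$ (where $g(0)=-4m\mu=0$ and $g>0$ immediately after) and can fail for $p=3$ (where $g(0)=-4m\mu+b(m+c)$ need not be negative); the pure $p$-spin model $\mu=0$ is explicitly within the scope of the theorem. The paper handles this carefully in Lemma \ref{lem:bdrybehavior}. In your framework the fix is easy: when $P'(0)\geq0$, the unimodality of $g$ forces $g$ to have at most one sign change, so $P$ is increasing-then-decreasing, $\{P>1\}$ is again at most one interval, and the $(+,-,+)$ pattern for $f''$ still holds. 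But as written, your dichotomy ``either fewer pieces (contradiction) or exactly the dec-inc-dec shape'' omits the inc-dec shape and therefore does not cover all parameter ranges. Incorporating that third shape would make the argument complete.
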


\begin{rem}
It is interesting to note that the essential difficulties in proving
a result of this type, namely proving the obstacle condition, bears
a striking resemblance to testing the validity of the second moment
method approach of Subag \cite{Sub15}. 
\end{rem}
We close this introduction by noting the following curiosity: one
may be tempted to conjecture that the result of \prettyref{thm:CL-ABA-2-p}
holds for general models. Such a result would have to rest crucially
on the assumption that the power series $\xi$ has non-negative coefficients.
To see this, let $\xi$ be a 1RSB model and let $\phi$ be optimal
for $P$. Of course, $\phi\in RSB_{1}$, and its formal conjugate
$\eta$ is optimal for $D$. Note that none of the arguments leading
up to \prettyref{eq:FP1RSB} require that $\xi$ is a power series
with positive coefficients. Furthermore, only $\xi(1)$ and $\xi'(1)$
are required to determine $m_{*},c_{*}$ by \eqref{eq:FP1RSB}. Thus,
if we change $\xi$ by adding a bump function which is supported away
from $0$ and $1$, $\phi$ must still be the $RSB_{1}$ ansatz as
above. Evidently, we can arrange for the altered $\xi$ to not satisfy
the obstacle condition $\eta\geq\xi$. Thus, the positivity of the
coefficients in the power series of $\xi$ is crucial to the validity
of theorems of the form \prettyref{thm:CL-ABA-2-p}. 

The results from this section are proved in \prettyref{sec:1RSB}.

\subsection{Acknowledgments }

We would like to thank our advisors G. Ben Arous and R.V. Kohn for
their support. A.J. would like to thank E. Subag and O. Zeitouni for
helpful discussions regarding the presentation of this work. We would
like to thank the New York University GRI Institute in Paris for its
hospitality during the initial phase of this project. A.J. would like
to thank the Department of Mathematics at Northwestern University
for its hospitality during the preparation of this paper. This research
was conducted while A.J. was supported by a National Science Foundation
Graduate Research Fellowship DGE-0813964; and National Science Foundation
grants DMS-1209165 and OISE-0730136, and while I.T. was supported
by a National Science Foundation Graduate Research Fellowship DGE-0813964;
and National Science Foundation grants OISE-0967140 and DMS-1311833.

\subsection{Notation and Spaces}

The notation $\fint_{I}f$ denotes the average of $f$ over $I$ .
$\partial_{+}f$ and $\partial_{-}f$ are respectively the right and
left derivative. $C_{loc}^{k}((0,1))$ is the space of functions that
are $C^{k}$ on every compact subset of $(0,1)$ and $W^{k,p}((0,1))$
is the Sobolev space of functions on $(0,1)$ that are $k$ times
weakly differentiable with derivatives lying in $L^{p}$. The space
$\cD=C_{c}^{\infty}((0,1))$ is the usual space of test functions
and $\cD'$ denotes the space of distributions. $BV=BV((0,1))$ is
the space of bounded variation functions, i.e., those $f\in L^{1}$
with first distributional derivative $f'$ given by finite signed
measures on $(0,1)$. $\cM=\cM([0,1])$ is the space of finite signed
measures on $[0,1]$. $\cM_{+}\subset\cM$ are those measures that
are non-negative.

\section{Gamma convergence results \label{sec:Gamma-convergence-results}}

We begin this section by proving \prettyref{thm:gamma-conv-min-conv}.
We then turn to proving \prettyref{cor:moderate-dev}.

\subsection{The functional convergence }

In the following two lemmas we fix $\xi_{\beta}=\xi$ and $h=0$ and
let $F_{\beta}=F_{\beta,0,\xi}$.

We begin with the proof of the $\Gamma$-$\liminf$ inequality.
\begin{lem}
\textbf{\label{lem:(-liminf.)}($\Gamma$-liminf.)} If $(\nu_{\beta})\in\cA$
converges $\nu_{\beta}\to\nu$ weakly, then
\[
\liminf\,F_{\beta}\left(\nu_{\beta}\right)\geq GS\left(\nu\right).
\]
\end{lem}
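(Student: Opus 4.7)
The main observation is that for $\nu_\beta \in X_\beta$ with $d\nu_\beta = \beta\mu_\beta[0,t]\,dt$, one has $\nu_\beta[s,1] = \beta\hat{\mu}_\beta(s)$, so
\[
F_\beta(\nu_\beta) = \int_0^1 \xi''(s)\,\nu_\beta[s,1]\,ds + \int_0^1 \left(\frac{1}{\nu_\beta[s,1]} - \frac{1}{\beta(1-s)}\right)ds.
\]
Writing $a_\beta(s)=\nu_\beta[s,1]$ and $a(s)=\nu[s,1]$, the constraint $\hat{\mu}_\beta(s)\leq 1-s$ forces $a_\beta(s)\leq \beta(1-s)$, making both integrands pointwise non-negative. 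If $\nu_\beta\notin X_\beta$ then $F_\beta(\nu_\beta)=\infty$ and there is nothing to prove, so I may assume $\nu_\beta\in X_\beta$ throughout.

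\textbf{Step 1 (Pointwise convergence of $a_\beta$).} I would use that elements of $\cA$ have at most a single atom, located at $t=1$, so $\nu(\{s\})=0$ for every $s\in(0,1)$. Combining weak-$*$ convergence $\nu_\beta\to\nu$ with the Portmanteau theorem applied to the closed set $[s,1]$ and the open set $(s,1]$ (which differ by the $\nu$-null singleton $\{s\}$) gives
\[
a_\beta(s)\;\longrightarrow\;a(s)\quad\text{for every } s\in(0,1).
\]
The endpoints $s\in\{0,1\}$ have Lebesgue measure zero and are irrelevant for what follows.

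\textbf{Step 2 (Limit of the integrand and Fatou).} Adopting the convention $1/0=+\infty$, for each $s\in(0,1)$ the quantity $1/(\beta(1-s))\to 0$ and $1/a_\beta(s)\to 1/a(s)$, so
\[
\xi''(s)\,a_\beta(s) + \frac{1}{a_\beta(s)} - \frac{1}{\beta(1-s)}\;\longrightarrow\;\xi''(s)\,a(s) + \frac{1}{a(s)}
\]
pointwise on $(0,1)$ (both sides equal to $+\infty$ when $a(s)=0$). Applying Fatou's lemma to this non-negative integrand, I obtain
\[
\liminf_{\beta\to\infty} F_\beta(\nu_\beta)\geq\int_0^1\left(\xi''(s)\,a(s)+\frac{1}{a(s)}\right)ds = GS(\nu).
\]
This covers the case $\nu=0$ automatically, since then $a\equiv 0$ and the right-hand side equals $+\infty$.

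\textbf{Main obstacle.} The argument is genuinely just a rewriting plus Fatou; the only subtle point is guaranteeing the pointwise convergence $a_\beta(s)\to a(s)$. This is where the structural assumption on $\cA$ (no atoms in $(0,1)$) does the real work: without it, weak-$*$ convergence would only give convergence at continuity points of $a$, and one would have to supplement the argument with monotonicity of $a$ to ensure convergence off a countable set. Here, the atom at $t=1$ is benign because it does not enter the Lebesgue integrals.
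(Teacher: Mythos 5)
Your proof is correct and follows essentially the same route as the paper: rewrite $F_\beta(\nu_\beta)$ in terms of $a_\beta(s)=\nu_\beta[s,1]$, obtain pointwise convergence $a_\beta(s)\to\nu[s,1]$ from weak-$*$ convergence (the paper invokes this only $\cL$-a.e., which suffices), and conclude by Fatou's lemma using the non-negativity of the integrand coming from $\hat{\mu}_\beta(s)\le 1-s$. The only cosmetic difference is that you apply Fatou once to the combined non-negative integrand, whereas the paper handles the $\xi''$ term by bounded convergence and reserves Fatou for the remaining term.
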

\begin{proof}
Without loss of generality, assume that $\left\{ \nu_{\beta}\right\} $
satisfies $F_{\beta}\left(\nu_{\beta}\right)\leq C$. This implies
that $\nu_{\beta}\in X_{\beta}$, so that $d\nu_{\beta}=\beta\mu_{\beta}\left[0,t\right]dt$
for some $\mu_{\beta}\in\Pr([0,1])$, and 

\begin{align*}
\liminf\,F_{\beta}\left(\nu_{\beta}\right) & =\liminf\int_{0}^{1}\xi''(s)\beta\hat{\mu}_{\beta}(s)ds+\int_{0}^{1}\frac{1}{\beta}\left(\frac{1}{\hat{\mu}\left(s\right)}-\frac{1}{1-s}\right)\,ds=\liminf\,I_{\beta}+II_{\beta}.
\end{align*}
By properties of weak convergence, $\lim\,\nu_{\beta}[s,1]=\nu[s,1],$
$\cL$- a.e., from which it follows that the first term converges,
\[
\lim\,I_{\beta}=\int_{0}^{1}\xi''\left(s\right)\nu[s,1]ds
\]
by the bounded convergence theorem. Now consider the second term.
By Fatou's lemma, 

\[
\liminf\,II_{\beta}\geq\int_{0}^{1}\liminf\,\frac{1}{\nu_{\beta}[s,1]}\left(1-\frac{1}{\beta}\frac{\nu_{\beta}[s,1]}{1-s}\right)\,ds=\int_{0}^{1}\frac{1}{\nu[s,1]}\,ds.
\]
Therefore,
\[
\liminf\,F_{\beta}\left(\nu_{\beta}\right)\geq GS\left(\nu\right)
\]
as required.\end{proof}
\begin{lem}
\label{lem:(-limsup.-)}($\Gamma$-limsup.) For every $\nu\in\cA$,
there exists a sequence $\left\{ \nu_{\beta}\right\} \subset\cA$
such that $\nu_{\beta}\to\nu$ weakly and 
\[
\limsup\,F_{\beta}\left(\nu_{\beta}\right)\leq GS\left(\nu\right).
\]
\end{lem}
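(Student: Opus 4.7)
The plan is to produce an explicit recovery sequence. We may assume $GS(\nu) < \infty$ since otherwise the inequality is immediate. Write $\nu = m(t)\,dt + c\delta_{1}$ with $m$ non-decreasing, cadlag and non-negative, and choose $\delta_{\beta}\downarrow 0$ such that $\beta\delta_{\beta}\to c$ and $\log(1/\delta_{\beta})/\beta\to 0$: take $\delta_{\beta}=c/\beta$ if $c>0$, and $\delta_{\beta}=\beta^{-3/2}$ if $c=0$. Define
\[
m_{\beta}(t) = (m(t)\wedge\beta)\,\indicator{[0,1-\delta_{\beta})}(t) + \beta\,\indicator{[1-\delta_{\beta},1]}(t).
\]
Then $m_{\beta}$ is non-decreasing, cadlag, and non-negative with $m_{\beta}(1)=\beta$, so $\mu_{\beta}:=dm_{\beta}/\beta$ is a probability measure on $[0,1]$ and $\nu_{\beta}:=m_{\beta}(t)\,dt\in X_{\beta}$.

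Weak convergence of $\nu_\beta \to \nu$ is then easy: for $f\in C([0,1])$, split $\int f\,d\nu_\beta$ at $1-\delta_\beta$. The integral over $[0,1-\delta_\beta]$ converges to $\int_0^1 f\,m\,dt$ by monotone convergence (using $\int m <\infty$), while $\beta\int_{1-\delta_\beta}^1 f\,dt\to c f(1)$ by continuity of $f$ at $1$ and $\beta\delta_\beta\to c$.

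For the energy, decompose $F_\beta(\nu_\beta)=I_\beta+II_\beta$ with
\[
I_\beta=\int_0^1\xi''(s)\nu_\beta[s,1]\,ds,\qquad II_\beta=\int_0^1\left(\frac{1}{\nu_\beta[s,1]}-\frac{1}{\beta(1-s)}\right)ds.
\]
By Fubini, $I_\beta=\int_{[0,1]}(\xi'(t)-\xi'(0))\,d\nu_\beta(t)\to\int(\xi'(t)-\xi'(0))\,d\nu(t)=\int\xi''(s)\nu[s,1]\,ds$ by the weak convergence just established. The critical observation for $II_\beta$ is that on $[1-\delta_\beta,1]$ we have $\nu_\beta[s,1]=\int_s^1\beta\,dt=\beta(1-s)$ exactly, so the integrand vanishes and the singularity at $s=1$ is removed. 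Thus $II_\beta=\int_0^{1-\delta_\beta}1/\nu_\beta[s,1]\,ds-\beta^{-1}\log(1/\delta_\beta)$, and the log term tends to $0$ by the choice of $\delta_\beta$. Pointwise on $[0,1)$, $\nu_\beta[s,1]\to\nu[s,1]$.

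The main obstacle is to pass the limit under the remaining integral. When $c>0$, the bound $\nu_\beta[s,1]\geq\beta\delta_\beta\to c$ yields the uniform dominator $2/c$, and the dominated convergence theorem gives $\int_0^{1-\delta_\beta}1/\nu_\beta[s,1]\,ds\to\int 1/\nu[s,1]\,ds$. When $c=0$ no uniform constant dominator is available, and the argument must exploit the integrability $1/\nu\in L^{1}([0,1])$ (which follows from $GS(\nu)<\infty$). The key estimate, obtained by writing $m\wedge\beta = m-(m-\beta)_{+}$ and comparing $\int_s^{1-\delta_\beta}(m\wedge\beta)\,dt+\beta\delta_\beta$ to $\int_s^1 m\,dt$, is
\[
\nu_\beta[s,1]\geq\nu[s,1]-E_\beta,\qquad E_\beta:=\int_{1-\delta_\beta}^1 m\,dt+\int_0^1(m-\beta)_{+}\,dt-\beta\delta_\beta,
\]
and $E_\beta\to 0$ (the first term by integrability of $m$, the second by dominated convergence, the third by choice of $\delta_\beta$). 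Set $s_\beta:=\sup\{s:\nu[s,1]\geq 2E_\beta\}$, which satisfies $s_\beta\uparrow 1$ since $\nu[s,1]>0$ for $s<1$ and $\nu[s,1]\to 0$ as $s\to 1$. On $[0,s_\beta]$ one has $\nu_\beta[s,1]\geq\nu[s,1]/2$, so $2/\nu[s,1]\in L^1$ is an admissible dominator and DCT applies; on $[s_\beta,1-\delta_\beta]$ the crude bound $1/\nu_\beta[s,1]\leq 1/(\beta\delta_\beta)$ shows the tail contribution is at most $(1-s_\beta)/(\beta\delta_\beta)$, which vanishes as $s_\beta\to 1$ and $\beta\delta_\beta$ tends to zero only slowly by design. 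Together with the convergence of $I_\beta$, this yields $\limsup F_\beta(\nu_\beta)\leq GS(\nu)$.
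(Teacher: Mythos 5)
Your construction and the $c>0$ case are fine, but the tail estimate in the $c=0$ case has a genuine gap. You bound the contribution of $[s_\beta,1-\delta_\beta]$ by $(1-s_\beta)/(\beta\delta_\beta)$ using the crude floor $\nu_\beta[s,1]\geq\beta\delta_\beta$, and you assert that this vanishes because ``$\beta\delta_\beta$ tends to zero only slowly by design.'' But $1-s_\beta$ is determined by the model (through how fast $\nu[s,1]=\int_s^1 m\,dt$ decays near $s=1$), and there is no reason it should be $o(\beta\delta_\beta)=o(\beta^{-1/2})$. Concretely, take $m(t)=\frac{1}{(1-t)\log^2(e/(1-t))}$ for $t$ near $1$ (extended by a constant near $0$) and $c=0$. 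Then $\nu[s,1]=1/\log(e/(1-s))$ near $1$, $1/\nu\in L^1$, so $GS(\nu)<\infty$. One computes $\int_{1-\delta_\beta}^1 m\sim\frac{2}{3\log\beta}$ and $\int(m-\beta)_+\sim\frac{1}{\log\beta}$, so $E_\beta\sim\frac{5}{3\log\beta}$, whence $1-s_\beta=e\cdot e^{-1/(2E_\beta)}\sim e\,\beta^{-3/10}$ and $(1-s_\beta)/(\beta\delta_\beta)\sim e\,\beta^{1/5}\to\infty$. The crude bound simply does not close. (One can rescue the construction with a sharper lower bound $\nu_\beta[s,1]\geq\int_s^1(m\wedge\beta)\,dt$, and by noticing that the integrand of $II_\beta$ vanishes already on $[t_\beta,1]$ where $t_\beta$ is the point at which $m$ reaches height $\beta$, but that is a different argument from the one you wrote.)

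The source of the difficulty is the a priori choice $\delta_\beta=\beta^{-3/2}$ together with the truncation $m\wedge\beta$: these lose tail mass relative to $\nu$, and you have no handle on the rate. The paper sidesteps this entirely. Rather than truncating $m$ at height $\beta$ and cutting off at a prescribed $1-\delta_\beta$, the paper chooses the cutoff point $q_\beta$ adaptively, via the intermediate value theorem, so that the mass balance $\int_{q_\beta}^1 m\,dt + c_\beta = \beta(1-q_\beta)$ holds with $c_\beta=c$ (or $c_\beta=1/\beta$ when $c=0$). This choice has two crucial consequences: first, $m(q_\beta)\leq\beta$ automatically (since $m$ is non-decreasing, $m(q_\beta)\leq\fint_{q_\beta}^1 m\leq\beta$ by the mass balance), so no truncation of $m$ is needed; second, and this is the punchline, for all $s\leq q_\beta$ one has $\nu_\beta[s,1]=\int_s^1 m\,dt + c_\beta\geq\nu[s,1]$ exactly. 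The integrand of $II_\beta$ is therefore dominated by $1/\nu[s,1]\in L^1$ from the start, and the passage to the limit is immediate by monotone (or dominated) convergence, with no case analysis and no tail estimate at all.
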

\begin{proof}
Write $d\nu=m\left(t\right)dt+c\delta_{1}$ and define 
\[
c_{\beta}=\begin{cases}
c & c>0\\
\frac{1}{\beta} & c=0
\end{cases}
\]

\begin{claim}
For $\beta>\nu[0,1]+1$, there exists a $q_{\beta}\in(0,1)$ with
the following properties 
\begin{enumerate}
\item $\int_{q_{\beta}}^{1}m(t)dt+c_{\beta}=\beta(1-q_{\beta}).$
\item $q_{\beta}\to1$
\item $\frac{m(q_{\beta})}{\beta}\leq1$
\end{enumerate}
\end{claim}
\begin{proof}
To see (1), let $f(t)=\int_{t}^{1}m(t)dt+c_{\beta}$ and $g(t)=\beta(1-t)$.
Then 
\[
g(0)=\beta>\nu[0,1]+1>\int_{0}^{1}m(t)dt+c_{\beta}=f(0)
\]
and 
\[
0=g(1)<f(1)=c_{\beta}.
\]
The result then follows by the intermediate value theorem. Choose
any such $q_{\beta}$.

To see (2), observe that 
\[
0\leq\beta(1-q_{\beta})\leq\int_{0}^{1}m(t)dt+c_{\beta}\leq C(\nu).
\]
To see (3), observe that by (1), 
\[
\fint_{q_{\beta}}^{1}\frac{m(t)}{\beta}dt\leq1.
\]
 Since $m$ is non-decreasing, (3) follows.
\end{proof}
Let $\beta$ and $q_{\beta}$ be as in the above claim and let 
\[
\mu_{\beta}[0,t]=\begin{cases}
\frac{m(t)}{\beta} & t<q_{\beta}\\
1 & t\geq q_{\beta}
\end{cases}.
\]
Note that this defines a probability measure so that $\mu_{\beta}\in\Pr[0,1].$
This gives us $d\nu_{\beta}=\beta\mu_{\beta}[0,t]dt\in X_{\beta}$

First observe that $\nu_{\beta}\to\nu$. To see this, observe that
if $\phi\in C([0,1])$, 
\begin{align*}
\int\phi d\nu_{\beta} & =\int_{0}^{q_{\beta}}\phi mdt+\int_{q_{\beta}}^{1}\beta\phi(t)dt\\
 & =\int_{0}^{q_{\beta}}\phi mdt+\left(\int_{q_{\beta}}^{1}mdt+c_{\beta}\right)\fint_{q_{\beta}}^{1}\phi(t)\to\int\phi mdt+c\phi(1)
\end{align*}
as desired. Now by definition we have

\[
\limsup\,F_{\beta}\left(\nu_{\beta}\right)=\limsup\,I_{\beta}+II_{\beta},
\]
so it suffices to show that $I_{\beta}\to I$ and $II_{\beta}\to II$.

Now since $\int_{0}^{1}d\nu_{\beta}\to\int_{0}^{1}d\nu$, it follows
that $\xi''(s)\int_{s}^{1}d\nu_{\beta}$ is bounded so that by the
bounded convergence theorem 
\[
\lim I_{\beta}\to\int\xi''(s)\nu[s,1]ds.
\]
Now consider 
\begin{align*}
II_{\beta} & =\int_{0}^{1}\frac{1}{\nu_{\beta}[s,1]}\left(1-\frac{1}{\beta}\frac{\nu_{\beta}[s,1]}{1-s}\right)\,ds=\int_{0}^{q_{\beta}}\frac{1}{\nu_{\beta}[s,1]}\left(1-\frac{1}{\beta}\frac{\nu_{\beta}[s,1]}{1-s}\right)ds.
\end{align*}
On $s\leq q_{\beta}$, we have 
\begin{align*}
\nu_{\beta}[s,1] & =\int_{s}^{q_{\beta}}m(t)dt+\beta(1-q_{\beta})=\int_{s}^{q_{\beta}}m(t)dt+\int_{q_{\beta}}^{1}m(t)dt+c_{\beta}=\int_{s}^{1}m(t)dt+c_{\beta}
\end{align*}
so that 
\[
II_{\beta}=\int_{0}^{q_{\beta}}\frac{1}{\int_{s}^{1}m(t)dt+c_{\beta}}\left(1-\frac{1}{\beta}\frac{\int_{s}^{1}m(t)dt+c_{\beta}}{1-s}\right)ds
\]
Observe that 
\[
0\leq\frac{1}{\int_{s}^{1}m(t)dt+c_{\beta}}\left(1-\frac{1}{\beta}\frac{\int_{s}^{1}m(t)dt+c_{\beta}}{1-s}\right)\indicator{[0,q_{\beta}]}\uparrow\frac{1}{\nu[s,1]}\indicator{[0,1)}
\]
so that by the monotone convergence theorem, 
\[
II_{\beta}\to\int_{0}^{1}\frac{1}{\nu[s,1]}ds
\]
as desired.
\end{proof}

\subsubsection{Proof of \prettyref{thm:gamma-conv-min-conv}}
\begin{proof}
The $\Gamma$-convergence result for $h=0$ and $\xi_{\beta}=\xi$
follows by \prettyref{lem:(-liminf.)} and \prettyref{lem:(-limsup.-)}.
In the case that, $\frac{h^{2}}{\beta^{2}}\to\bar{h}$ and $\xi''_{\beta}\to\xi''$
uniformly, the corresponding result then follows by a continuous perturbation argument (see \prettyref{thm:stability-GC}).

We now turn to the convergence of minimizers and \eqref{eq:GSE-vf}.
First observe that by \prettyref{lem:A-set-cpt}, the family $F_{\beta}$
are sequentially equi-coercive. Since $GS$ has a unique minimizer,
the result follows by \prettyref{thm:FTGC}. 
\end{proof}

\subsection{Moderate Deviations of the minimizers}

In the following we study consequences of the $\Gamma-$convergence.
In particular, we aim to prove \prettyref{cor:moderate-dev}. By \prettyref{thm:gamma-conv-min-conv},
we know that $\nu_{\beta}=\beta\mu_{\beta}[0,t]dt\to d\nu=m(t)dt+c\delta_{1}$
weakly where $d\nu$ is the unique minimizer of GS.
\begin{lem}
\label{lem:cdfs-conv}We have $\beta\mu_{\beta}[0,t]\to m(t)$ for
$t\in[0,1)\cap CtyPts(m(t))$. \end{lem}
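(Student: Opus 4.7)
The plan is to combine monotonicity with the weak-$*$ convergence already in hand. The structural inputs are: (a) the density $s \mapsto \beta\mu_\beta[0,s]$ is non-decreasing, since it is $\beta$ times the CDF of $\mu_\beta \in \Pr([0,1])$; and (b) by \prettyref{thm:gamma-conv-min-conv}, the measures $\nu_\beta = \beta\mu_\beta[0,s]\,ds$ converge weak-$*$ to $\nu = m(s)\,ds + c\delta_1$. Since $\nu$ places no atoms on $[0,1)$, every point of $[0,1)$ is a continuity point of $\nu$, so the portmanteau theorem yields $\nu_\beta([a,b]) \to \int_a^b m(s)\,ds$ for all $0 \le a < b < 1$.

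Fix $t_0 \in (0,1)$ at which $m$ is continuous, and pick $\epsilon > 0$ with $t_0 + \epsilon < 1$. Using (a), a non-decreasing function is sandwiched between the averages on the immediately adjacent intervals, giving
\[
\frac{1}{\epsilon}\,\nu_\beta\bigl([t_0 - \epsilon, t_0]\bigr) \;\le\; \beta\mu_\beta[0, t_0] \;\le\; \frac{1}{\epsilon}\,\nu_\beta\bigl([t_0, t_0 + \epsilon]\bigr).
\]
Sending $\beta \to \infty$ and applying (b) yields
\[
\frac{1}{\epsilon}\int_{t_0 - \epsilon}^{t_0} m(s)\,ds \;\le\; \liminf_\beta \beta\mu_\beta[0, t_0] \;\le\; \limsup_\beta \beta\mu_\beta[0, t_0] \;\le\; \frac{1}{\epsilon}\int_{t_0}^{t_0 + \epsilon} m(s)\,ds.
\]
Letting $\epsilon \to 0$ and invoking continuity of $m$ at $t_0$, both outer averages collapse to $m(t_0)$, which proves the desired convergence at every interior continuity point.

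The boundary case $t_0 = 0$ admits no left-sided interval, but the upper half of the sandwich on $[0,\epsilon]$ still delivers $\limsup \beta\mu_\beta[0,0] \le m(0^+) = m(0)$, and the matching lower bound $\liminf \beta\mu_\beta[0,0] \ge 0$ suffices whenever the natural cadlag continuity convention at the left endpoint forces $m(0) = 0$. I do not anticipate any serious obstacle: the argument is a clean monotonicity-plus-portmanteau computation, entirely downstream of the weak convergence supplied by \prettyref{thm:gamma-conv-min-conv}; the only mild delicacy lies in fixing the correct one-sided convention at $t_0 = 0$.
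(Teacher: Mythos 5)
Your proof is correct, but it takes a genuinely different route from the paper's. The paper treats $f_\beta=\beta\mu_\beta[0,\cdot]$ as a uniformly bounded sequence of monotone functions on each $[0,T]$, $T<1$, extracts a pointwise-convergent subsequence (Helly), identifies the limit with $m$ almost everywhere by testing against $L^1$ functions via dominated convergence, upgrades a.e.\ equality of monotone functions to equality at continuity points, and closes with the subsequence principle. You instead squeeze the pointwise value $\beta\mu_\beta[0,t_0]$ between the averages of the density of $\nu_\beta$ over the adjacent intervals $[t_0-\epsilon,t_0]$ and $[t_0,t_0+\epsilon]$, pass to the limit in $\beta$ by portmanteau (legitimate: total masses converge since $1\in C([0,1])$, and $\nu$ is atomless on $[0,1)$ so the interval boundaries are $\nu$-null), and then send $\epsilon\to0$ using continuity of $m$ at $t_0$. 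Your version avoids any compactness or subsequence extraction and is the more direct computation; the paper's is the softer ``compactness plus uniqueness of the limit'' scheme. Both arguments share the identical delicacy at $t_0=0$: there is no left-sided interval, and when $dm$ has an atom at $0$ (which \prettyref{lem:atomatzero} shows is the generic situation) one only obtains $\limsup_\beta\beta\mu_\beta(\{0\})\le m(0^+)$. You are right that the statement is meaningful at $0$ only under the convention that $0$ counts as a continuity point of $m$ precisely when $m(0^+)=0$ (i.e.\ extending $m$ by zero to the left), and under that reading your one-sided bound closes the case; the paper's proof passes over the same point silently, so this is not a defect of your argument relative to the original.
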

\begin{proof}
Let $f_{\beta}=\beta\mu_{\beta}[0,t]$ . Observe first that since
$f_{\beta}dt\to d\nu$ , we have that for every $T\in[0,1)$, 
\[
\sup_{t\in[0,T]}\abs{f_{\beta}(t)}=f_{\beta}(T)\leq\frac{1}{1-T}\int_{T}^{1}f_{\beta}(t)dt=\frac{1}{1-T}\nu_{\beta}\left(\left[T,1\right]\right)\leq C(T).
\]
Thus $f_{\beta}$, restricted to the interval $[0,T]$ is a sequence
of uniformly bounded monotone functions. As a consequence, every subsequence
has a further subsequence that converges point-wise on $[0,T]$ to
some function $f(t)$, and that $f(t)$ also has this bound. 

But then, by the dominated convergence theorem applied to this subsequence,
\[
\int_{0}^{T}g(t)f_{\beta}(t)dt\to\int_{0}^{T}g(t)f(t)dt
\]
for any $g\in L^{1}[0,T]$. As a result, $f(t)=m(t)$ a.e. on $[0,T]$.
By monotonicity, $f(t)=m(t)$ at their continuity points on $[0,T]$.
The subsequence principle applied to $f_{\beta}(t)$ at continuity
points of $m(t)$ on $[0,T]$. Since this holds for each $T<1$ we
conclude the result. \end{proof}
\begin{lem}
\label{lem:atom-conv}Let $q_{\beta}\to1$ be such that $\beta\mu_{\beta}[0,q_{\beta})\to m(1^{-})$
, and suppose that $m(1^{-})<\infty$. If $Y_{\beta}$ have law $\mu_{\beta}$
, then 
\begin{align*}
\nu\left(\left\{ 1\right\} \right) & =\lim\E_{\mu_{\beta}}\left(\beta(1-Y_{\beta})\vert Y_{\beta}\in[q_{\beta},1]\right).
\end{align*}
\end{lem}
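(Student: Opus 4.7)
The plan is to expand the conditional expectation as
\[
\E_{\mu_\beta}\bigl(\beta(1-Y_\beta) \bigm| Y_\beta \in [q_\beta, 1]\bigr) = \frac{\beta \int_{[q_\beta, 1]} (1-t)\, d\mu_\beta(t)}{\mu_\beta[q_\beta, 1]}
\]
and identify the limits of numerator and denominator separately. Using $1-t = \int_t^1 ds$ and Fubini, and recalling $d\nu_\beta = \beta\mu_\beta[0,t]\,dt$, the numerator rewrites as
\[
\beta \int_{[q_\beta, 1]} (1-t) \, d\mu_\beta(t) = \int_{q_\beta}^1 \beta\mu_\beta[q_\beta, s]\, ds = \nu_\beta[q_\beta, 1] - (1-q_\beta)\,\beta\mu_\beta[0, q_\beta).
\]
By hypothesis $\beta\mu_\beta[0, q_\beta) \to m(1^-) < \infty$ and $q_\beta \to 1$, so the subtracted term vanishes; the same hypothesis gives $\mu_\beta[q_\beta, 1] = 1 - \beta^{-1}\cdot\beta\mu_\beta[0, q_\beta) \to 1$. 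It therefore suffices to show $\nu_\beta[q_\beta, 1] \to c = \nu(\{1\})$.

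To this end, write $\nu_\beta[q_\beta, 1] = \nu_\beta[0,1] - \nu_\beta[0, q_\beta)$. Since $\nu_\beta \to \nu$ in the weak-$*$ topology on $\cM([0,1])$, testing against $f \equiv 1$ gives $\nu_\beta[0,1] \to \int_0^1 m(t)\, dt + c$. The remaining task, which is the main obstacle, is to prove
\[
\nu_\beta[0, q_\beta) \longrightarrow \int_0^1 m(t)\, dt,
\]
despite the fact that $q_\beta$ varies with $\beta$, so that the Portmanteau theorem does not apply directly to any single set.

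I would handle this by truncating near $t = 1$ and using \prettyref{lem:cdfs-conv} together with the monotonicity of $t \mapsto \beta\mu_\beta[0, t]$. For the lower bound, pick any continuity point $r \in (0, 1)$ of $m$; eventually $q_\beta > r$, so $\nu_\beta[0, q_\beta) \geq \int_0^r \beta\mu_\beta[0, t]\, dt$. By \prettyref{lem:cdfs-conv} and bounded convergence (with domination $\beta\mu_\beta[0, t] \leq \beta\mu_\beta[0, r] \to m(r)$), the right side tends to $\int_0^r m\, dt$; taking $r \uparrow 1$ and invoking $m(1^-) < \infty$ yields $\liminf \nu_\beta[0, q_\beta) \geq \int_0^1 m\, dt$. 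For the upper bound, split at $r$: the piece on $[r, q_\beta]$ satisfies $\int_r^{q_\beta} \beta\mu_\beta[0, t]\, dt \leq (1-r)\,\beta\mu_\beta[0, q_\beta) \leq (m(1^-)+1)(1-r)$ for $\beta$ large by monotonicity and the hypothesis, while the piece on $[0, r]$ again converges to $\int_0^r m\, dt$. Letting $r \uparrow 1$ gives the matching upper bound, hence $\nu_\beta[q_\beta, 1] \to c$, completing the proof.
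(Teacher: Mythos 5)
Your proof is correct and follows essentially the same route as the paper: both identify the numerator with $\nu_\beta[0,1]-\nu_\beta[0,q_\beta)-(1-q_\beta)\beta\mu_\beta[0,q_\beta)$, dispatch the middle term by the hypothesis $\beta\mu_\beta[0,q_\beta)\to m(1^-)<\infty$, and then reduce to showing $\nu_\beta[0,q_\beta)\to\int_0^1 m\,dt$ via \prettyref{lem:cdfs-conv}. The only cosmetic difference is that the paper proves this last step in one stroke by dominated convergence applied to $g_\beta=\indicator{[0,q_\beta]}\beta\mu_\beta[0,t]$ with the constant dominating function coming from the same hypothesis, whereas you argue by truncating at a continuity point $r<1$ and squeezing; both are valid and essentially equivalent.
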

\begin{proof}
Note that since $m(1^{-})<\infty$ and $\mu_{\beta}[0,1]=1$, so that
$\mu[q_{\beta},1]\to1$. Thus it suffices to show that 
\[
\lim\beta\int\indicator{[q_{\beta},1]}(1-t)d\mu_{\beta}\to\nu\left(\left\{ 1\right\} \right).
\]
 To see this observe that 
\[
\beta\int_{0}^{1}\mu_{\beta}[0,t]dt=\beta\int_{0}^{q_{\beta}}\mu_{\beta}[0,t]dt+\beta\mu[0,q_{\beta})(1-q_{\beta})+\beta\int_{q_{\beta}}^{1}\mu_{\beta}[q_{\beta},t]dt=I+II+III.
\]
 Observe that $II\to0$ by assumption. Furthermore observe that 
\[
III=\beta\int\indicator{[q_{\beta},1]}(1-t)d\mu_{\beta}.
\]
Since $\nu=m(t)dt+c\delta_{1}$ and $\nu_{\beta}\to\nu$, 
\[
\beta\int_{0}^{1}\mu_{\beta}[0,t]dt\to\int_{0}^{1}m(t)dt+c,
\]
It thus suffices to show that $I\to\int_{0}^{1}m(t)dt$. To see this
observe that if we define $g_{\beta}(t)=\indicator{[0,q_{\beta}]}\beta\mu_{\beta}[0,t]$,
then 
\[
\sup\abs{g_{\beta}(t)}\leq\beta\mu_{\beta}[0,q_{\beta})\leq C
\]
and $g_{\beta}(t)\to m(t)$ for $\cL$-a.e. t by \prettyref{lem:cdfs-conv}.
The result then follows by the dominated convergence theorem.
\end{proof}

\subsubsection{\emph{Proof of \prettyref{cor:moderate-dev}.}}
\begin{proof}
Proof of $(1)$. \prettyref{thm:gamma-conv-min-conv} implies $\beta\mu_{\beta}[0,t]dt\to d\nu$.
A simple integration by parts argument then shows the first result.

Proof of $(2)$. The second result follows from \prettyref{lem:cdfs-conv}.

Proof of (3). The third result comes from \prettyref{lem:atom-conv}. 
\end{proof}

\section{The Dual of the ground state energy Problem\label{sec:The-Dual-of}}

In this section, we prove \prettyref{thm:duality}. We follow the
usual method of introducing an auxiliary function and proving a minmax
theorm for it. The result then follows by studying the min-max and
the max-min problems. We then present a preliminary analysis of the
two functionals which will be important for the regularity theory
of these problems.\textbf{ For the purposes of this section}, we will
think of $\xi$ and $h$ as fixed and write $P=P_{\xi,h}$ when it
is unambiguous.

\subsection{Proof of duality}

In this subsection we prove the duality theorem, \prettyref{thm:duality}.
We introduce the following notation for ease of comparison with \cite{BNS72,NirenbergTopicsNFA01}.
This notation, with the exception of the set $B$, will be used only
for the following lemma. Let $F=C([0,1])$, $G=\cM([0,1])$, $A=\cC$,
and 
\begin{align*}
B & =\left\{ \mu\in\cM_{+}:\mu(\{0,1\})=0\right\} .
\end{align*}
Let $S(\nu):\cM_{+}\to\R$ be defined by 
\[
S(\nu)=2\int\sqrt{\nu_{ac}(x)}dx.
\]
Note that by Jensen's inequality this is finite. Basic regularity
of $S$ is shown in \prettyref{sub:Square-Roots-of}. One important
fact from the latter section that will be used frequently in the subsequent
is the representation 
\begin{equation}
S(\nu)=\inf_{\phi\geq0}\,\int\phi d\nu+\int\frac{1}{\phi}dx.\label{eq:D-inf-rep}
\end{equation}

Define $K:A\times B\to\R$ by 
\[
K\left(u,v\right)=(\xi''-v,u)+S(v)+h^{2}u(0).
\]
Give $F$ the norm topology and $G$ the norm-topology, and give $A,B$
the induced topologies.
\begin{lem}
\label{lem:abstract-duality-kyfan}We have
\[
\inf_{u\in A}\sup_{v\in B}\,K\left(u,v\right)=\sup_{v\in B}\inf_{u\in A}\,K\left(u,v\right).
\]
\end{lem}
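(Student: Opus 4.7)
The claim is a minimax equality for the concave-convex functional $K$ on $A \times B$. The plan is to verify the hypotheses of a minimax theorem of Brezis-Nirenberg-Stampacchia type (cf.~\cite{BNS72}): convex-concave structure, one-sided semicontinuity, and a coercivity condition to compensate for noncompactness of the two sets.

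\textbf{Structural properties and semicontinuity.} First I would record that $A = \cC$ and $B$ are both convex. For each fixed $v \in B$, the map $u \mapsto K(u,v)$ is affine in $u$ and sup-norm continuous, since both $u \mapsto (\xi''-v,u)$ (integration against a finite signed measure) and $u \mapsto u(0)$ are continuous linear functionals on $C([0,1])$; in particular it is convex and lower semicontinuous. For each fixed $u \in A$, $v \mapsto K(u,v)$ is the sum of the continuous linear functional $v \mapsto -(v,u)$ and of $S(v)$. Concavity and upper semicontinuity of $S$ in TV norm follow from the infimum representation
\begin{equation*}
S(v) = \inf_{\phi > 0}\left[(v,\phi) + \int 1/\phi\,dx\right],
\end{equation*}
cited as \prettyref{cor:D-usc}, which writes $S$ as the infimum of a family of affine continuous functionals of $v$.

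\textbf{Coercivity.} Because neither $A$ nor $B$ is norm compact, I would identify a coercive direction on the $v$-side. Take $u_0 \equiv 1 \in A$. By Cauchy--Schwarz applied to $2\int\sqrt{v_{ac}}\,dx$,
\begin{equation*}
S(v) \leq 2\sqrt{v_{ac}([0,1])} \leq 2\sqrt{v([0,1])},
\end{equation*}
so that $K(1,v) \leq \xi'(1)-\xi'(0) + h^2 - v([0,1]) + 2\sqrt{v([0,1])} \to -\infty$ as $\|v\|_{TV} \to \infty$. Consequently, for any threshold $\alpha$ the superlevel set $\{v \in B : K(1,v) \geq \alpha\}$ is TV-bounded, hence weak-$*$ relatively compact by Banach--Alaoglu.

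\textbf{Minimax argument.} Weak duality $\sup_v \inf_u K \le \inf_u \sup_v K$ is automatic. For the reverse inequality, I would fix $\alpha$ strictly below the right-hand side, restrict $v$ to the TV-bounded set guaranteed by coercivity, and apply the minimax theorem on $A$ times this weak-$*$ compact set. The semicontinuity hypotheses are to be checked in the weak-$*$ topology on $G$: $v \mapsto (\xi''-v, u)$ is weak-$*$ continuous for $u \in C([0,1])$, and $S$ is weak-$*$ upper semicontinuous (again, as an infimum of weak-$*$ continuous functionals). Since $K(\cdot,v)$ is affine and sup-norm continuous on the convex set $A$, all hypotheses of the Ky Fan / Brezis--Nirenberg--Stampacchia theorem are satisfied.

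\textbf{The main obstacle.} The principal subtlety is the clash between the strong topology declared for $G$ in the lemma's setup and the weak-$*$ compactness used in the coercivity step. Specifically, $B$ is \emph{not} weak-$*$ closed in $\cM_+$: a sequence in $B$ may develop atoms at $\{0,1\}$ in the limit. I would handle this by passing to the weak-$*$ closure $\overline B$, proving the minimax identity there, and then arguing that any would-be maximizer with an atom at $\{0,1\}$ can be approximated from within $B$ without affecting the value of $K$. This is plausible because such atoms contribute nothing to $S$ (which only sees the absolutely continuous part) and contribute only to the boundary value $u(0)$ or $u(1)$ of the linear term in $v$, which can be absorbed into the admissibility conditions. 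Making this approximation clean is the only nontrivial technical step.
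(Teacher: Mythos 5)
Your strategy is viable but genuinely different from the paper's, and the difference matters. The paper also invokes the Brezis--Nirenberg--Stampacchia generalization of Ky Fan's theorem, but it places the compactness hypothesis on the \emph{function} side $A=\cC$ rather than on the measure side $B$: taking $\tilde v=0\in B$, one has $K(u,0)=(\xi'',u)+h^2u(0)$, and the sublevel set $\{u\in\cC:(\xi'',u)\le\lambda\}$ is norm-compact in $C([0,1])$ by a Helly-type argument (\prettyref{lem:C-set-cpt}). This keeps the norm topology on $\cM([0,1])$ throughout (for which \prettyref{cor:sqrt-usc} already gives upper semicontinuity of $S$) and completely sidesteps the fact that $B$ is not weak-$*$ closed. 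Your route --- coercivity of $v\mapsto K(1,v)$ plus Banach--Alaoglu --- can be made to work, but two points need care. First, the step ``restrict $v$ to the TV-bounded superlevel set and apply the minimax theorem there'' cannot be read as a naive truncation: shrinking the set over which the supremum is taken only \emph{decreases} $\inf_u\sup_v K$, and for general $u$ the near-maximizers of $K(u,\cdot)$ look like $(u+\epsilon)^{-2}\,dx$, whose total variation is unbounded as $u$ ranges over $A$, so they need not lie in the superlevel set of $K(1,\cdot)$. What you actually need is the asymmetric form of the theorem in which compactness of a single superlevel set $\{v:K(u_0,v)\ge\alpha\}$ for some $\alpha<\inf_A\sup_B K$ is itself the hypothesis; the compactness is exploited inside the proof of the minimax theorem, not by replacing $B$ with a compact subset. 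Second, the closure issue you flag is real but resolves more easily than you fear: for $v\in\cM_+$ write $v=v_0+a\delta_0+b\delta_1$ with $v_0\in B$; the atoms contribute nothing to $S$ and contribute $-au(0)-bu(1)\le0$ to the linear term since $u\ge0$, so $K(u,v)\le K(u,v_0)$ for every $u$, and both $\inf\sup$ and $\sup\inf$ over the weak-$*$ closure $\cM_+$ coincide with those over $B$. With those two repairs your argument closes; the paper's choice of where to put the compactness is simply the more economical one.
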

\begin{proof}
We use a generalization of Ky Fan's min-max theorem due to Brezis,
Nirenberg, Stampacchia \cite{BNS72,NirenbergTopicsNFA01}. Note that
$A,B$ are convex sets, and that $F$ is a Hausdorff topological vector
space. We need to check
\begin{enumerate}
\item For each $v\in B$, $K$ is quasi-convex in $u$ and l.s.c. on $A$
\item For each $u\in A$, $K$ is quasi-concave in $v$ and u.s.c. on $B$
\item For some $\tilde{v}\in B$, and some $\lambda>\sup_{B}\inf_{A}K$,
the set $\left\{ u\in A\ :\ K\left(u,\tilde{v}\right)\leq\lambda\right\} $
is compact.
\end{enumerate}
Then the Generalized Ky Fan min-max theorem will imply the result.

Pf of 1. Let $v\in B$, then the map 
\[
A\to\R,\ u\to K\left(u,v\right)
\]
is affine with continuous linear part, hence it is (quasi-)convex
and continuous.

Pf of 2. Let $u\in A$, then the map 
\[
B\to\R,\ v\to K\left(u,v\right)
\]
is concave by concavity of $x\mapsto\sqrt{x}$. It is upper semi-continuous
by \prettyref{cor:sqrt-usc}.

Pf of 3. We begin by asserting that $\sup_{B}\inf_{A}K<\infty$. To
see that, let $P$ is as in \prettyref{eq:Pfunc}. By \eqref{eq:D-inf-rep},
it follows for every $u\in A$, 
\begin{equation}
\sup_{v\in B}K(u,v)\leq P(u)\label{eq:K-P-upperbound}
\end{equation}
so that 
\[
\sup_{B}\inf_{A}K\leq\inf_{A}\sup_{B}K\leq\inf_{A}P\leq P(1)<\infty
\]
Therefore there exists $\lambda\in\R$ such that $\lambda>\sup_{B}\inf_{A}K$.
Since the set 
\[
E=\left\{ u\in A\ :\ K\left(u,0\right)\leq\lambda\right\} 
\]
is a closed subset of the set 
\[
\{u\in A:0\leq(\xi'',u)\leq\lambda\},
\]
\prettyref{lem:C-set-cpt} in the case $f=\xi''$, yields that $E$
is norm compact in $C[0,1]$.\end{proof}
\begin{thm}
\label{thm:P-D-duality-proof}We have
\[
\min_{\phi\in\cC}\,P\left(\phi\right)=\sup_{\eta\in\cK_{h,\xi}}\,D\left(\eta\right).
\]
\end{thm}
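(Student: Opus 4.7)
The plan is to apply the minimax identity $\inf_{u\in\cC}\sup_{v\in B} K(u,v) = \sup_{v\in B}\inf_{u\in\cC} K(u,v)$ from Lemma \ref{lem:abstract-duality-kyfan} and to identify its two sides with $\min_{\phi\in\cC} P(\phi)$ and $\sup_{\eta\in\cK_{h,\xi}} D(\eta)$, respectively.

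For the min--max side, fix $u \in \cC$. If $u > 0$ on $[0,1]$, then $u \geq c > 0$ by continuity, and the representation \eqref{eq:D-inf-rep} applied at $\phi = u$ gives $2\int\sqrt{v_{ac}}\,dx - (v, u) \leq \int u^{-1}\,dx$ for every $v \in B$, with equality at $v = u^{-2}\,dx \in B$. Hence $\sup_v K(u,v) = (\xi'', u) + h^2 u(0) + \int u^{-1}\,dx = P(u)$. If instead $u(t_0) = 0$ for some $t_0 \in [0,1]$, then $P(u) = \infty$ and also $\sup_v K(u,v) = \infty$ (take $v = M\,dx$ on a neighborhood of $t_0$ and let $M \to \infty$). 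Since $P$ attains its minimum by \eqref{eq:P-GSE}, the first identification follows.

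For the max--min side, given $v \in B$ define
\[
\eta_v(t) := \xi(1) - \int_t^1 \bigl[\xi'(0) - h^2 + v[0,s]\bigr]\, ds.
\]
Then $\eta_v$ is convex and continuous with $\eta_v(1) = \xi(1)$, $\eta_v'(0^+) = \xi'(0) - h^2$, and $\eta_v'' = v$ distributionally; thus $v \mapsto \eta_v$ bijects $B$ onto the set of convex continuous functions with these boundary data, sending $\{\eta_v \geq \xi\}$ to $\cK_{h,\xi}$ and $2\int\sqrt{v_{ac}}\,dx$ to $D(\eta_v)$. Since $L(u) := (\xi''-v, u) + h^2 u(0)$ is linear in $u$ and $\cC$ is a convex cone, $\inf_{u \in \cC} K(u,v) = D(\eta_v)$ when $L \geq 0$ on $\cC$ and $-\infty$ otherwise. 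Everything therefore reduces to the claim: $L \geq 0$ on $\cC$ if and only if $\eta_v \geq \xi$.

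To prove this claim, in the ``only if'' direction I test $L$ against $u_s(t) := \min\bigl(1, (1-t)/(1-s)\bigr) \in \cC$ for $s \in (0,1)$; integration by parts on $\int u_s \xi''\,dx$ and Stieltjes integration by parts on $\int u_s\, d\eta_v'$, together with the boundary conditions on $\eta_v$, yield the clean identity $L(u_s) = \bigl(\eta_v(s) - \xi(s)\bigr)/(1-s)$, so $L \geq 0$ on $\cC$ forces $\eta_v \geq \xi$ on $(0,1)$ and then on $[0,1]$ by continuity. For the ``if'' direction, put $\psi := \eta_v - \xi \geq 0$, so that $\psi(1) = 0$ and $\psi'(0^+) = -h^2$; two distributional integrations by parts give
\[
L(u) = -u(1)\,\psi'(1^-) - u'(0^+)\,\psi(0) - \int_0^1 \psi\, du',
\]
and each term is non-negative from the signs $u(1), \psi(0) \geq 0$; $u'(0^+), \psi'(1^-) \leq 0$ ($u$ non-increasing; $\psi \geq 0$ attains its minimum at $1$); and $du' \leq 0$, $\psi \geq 0$ ($u$ concave). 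The main technical obstacle will be justifying this last integration by parts when $u \in \cC$ has one-sided derivatives that are only BV and possibly unbounded at the endpoints (e.g.\ $u(t) = 1 - \sqrt{t}$); this can be handled by mollifying $u$ within $\cC$ (which preserves non-negativity, monotonicity, and concavity) and passing to the limit using the $C^0$-continuity of $L$.
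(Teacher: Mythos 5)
Your overall strategy is the same as the paper's: invoke Lemma \ref{lem:abstract-duality-kyfan} and then identify the two sides of the minimax identity. Your treatment of the max--min side is correct and arguably cleaner than the paper's packaging (the paper uses a probability measure supported on a compact subset of $\{\xi>\eta\}$ and a separate lemma on the boundary value problem; your explicit bijection $v\mapsto\eta_v$ and the one-parameter family $u_s$ streamline this, and your ``if'' direction is essentially the same integration-by-parts as the paper's).

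However, your treatment of the min--max side has a genuine gap at the point where $u$ vanishes. First, the claim ``if $u(t_0)=0$ for some $t_0\in[0,1]$ then $P(u)=\infty$'' is false: for $u(t)=\sqrt{1-t}\in\cC$ one has $u(1)=0$ yet $\int 1/u\,dx=2<\infty$, so $P(u)<\infty$. Second, the suggested test $v=M\,dx$ on a neighborhood of $t_0$ does not force $\sup_v K(u,v)$ to blow up: with $v=M\mathbf{1}_{[1-\delta,1]}dx$ one gets $K(u,v)=(\xi'',u)+h^2u(0)-M\int_{1-\delta}^1 u+2\sqrt{M}\,\delta$, and for $u$ vanishing only at $1$ (e.g.\ $u=1-t$, where $P(u)=\infty$, or $u=\sqrt{1-t}$, where $P(u)<\infty$) there is no choice of $M,\delta$ that sends this to $+\infty$. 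Your first case, $u\geq c>0$, is fine; but it is exactly the remaining case $u(1)=0$ that needs $\sup_v K(u,v)=P(u)$ (finite or infinite), and your argument does not establish it. The paper avoids all case distinctions by testing with $v_\epsilon=(u+\epsilon)^{-2}dx\in B$ and letting $\epsilon\downarrow 0$, whence $S(v_\epsilon)-(v_\epsilon,u)\geq\int(u+\epsilon)^{-1}dx\to\int u^{-1}dx$ by monotone convergence; this works uniformly for all $u\in\cC$ and is the fix your proof needs. As a minor note, the example $u(t)=1-\sqrt{t}$ you cite for the BV issue at the end is convex, hence not in $\cC$; the function you mean is something like $u(t)=\sqrt{1-t}$, whose derivative diverges at $t=1$ (consistent with the paper's remark that $\phi'(0)\in\R$ but $\phi'(1)\in\R\cup\{-\infty\}$), and the paper handles this via the approximation Lemma \ref{lem:C-finite-deriv-approx} rather than mollification, though both work.
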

\begin{proof}
Our first goal is to show that for all $\phi\in A$, 
\begin{align}
\sup_{\sigma\in B}K\left(\phi,\sigma\right) & =P\left(\phi\right).\label{eq:supK-is-P}
\end{align}
The upper bound is given by \prettyref{eq:K-P-upperbound}. On the
other hand, for all $\epsilon>0$ we let $\sigma_{\epsilon}(dx)=\frac{1}{(\phi+\epsilon)^{2}}dx$,
to get that 
\begin{align*}
\inf_{\sigma\in B}\left(\left(\phi,\sigma\right)-\int2\sqrt{\sigma_{ac}(x)}dx\right) & \leq\int\frac{\phi}{(\phi+\epsilon)^{2}}dx-2\int\frac{1}{(\phi+\epsilon)}dx\\
 & \leq-\int\frac{1}{\phi+\epsilon}dx\to-\int\frac{1}{\phi}dx
\end{align*}
 as $\epsilon\to0$ by the monotone convergence theorem. Thus \eqref{eq:supK-is-P}
holds.

Next we show that for all $\sigma\in B$,

\[
\inf_{\phi\in\cC}K\left(\phi,\sigma\right)=S(\sigma)+\Xi\left(\sigma\right)
\]
where
\[
\Xi\left(\sigma\right)=\inf_{\phi\in\cC}\left((\phi,\xi'')-(\phi,\sigma)+h^{2}\phi(0)\right)=\begin{cases}
0 & \sigma=\eta'',\ \eta\in\cK_{h,\xi}\\
-\infty & \text{otherwise}
\end{cases}.
\]
The first equality is self-evident. The issue is to show the second
equality. 

Suppose first that there is no $\eta\in K_{\xi}$ such that $\eta''=\sigma$.
Define $\eta$ to be the solution of 
\[
\begin{cases}
\eta''=\sigma\\
\eta'(0)=\xi'(0)-h^{2} & ,\\
\eta(1)=\xi(1)
\end{cases}
\]
(see \prettyref{lem:meas-BVP-unique}). Observe that $\eta$ is continuous,
convex, and has the boundary data from the definition of $\cK_{h,\xi}$.
Thus it must be that $\{\xi>\eta\}$ is non-empty.

Take $L\in\R_{+}$ and $\phi$ satisfying $\phi'(0)=0$, $\phi(1)=0$,
and $-\phi''=L\mu$ where $\mu$ is a probability measure supported
in a compact subset of $\{\xi>\eta\}\backslash\{0,1\}$. Since $\phi\in\cC\cap\left\{ \phi'\in BV\right\} $
it follows by \prettyref{lem:(Integration-by-parts)} that 
\begin{align*}
\left(\left(\xi-\eta\right)'',\phi\right)+h^{2}\phi(0) & =\left(\xi-\eta\right)'\phi\vert_{0}^{1}-\left(\xi-\eta\right)\phi'\vert_{0}^{1}+\left(\eta-\xi,-\phi''\right)+h^{2}\phi(0)\\
 & =L\left(\eta-\xi,\mu\right).
\end{align*}
Taking $L\to\infty$ gives the result. 

We now need to show that the infimum is $0$ when $\sigma=\eta''$
for some $\eta\in\cK_{h,\xi}$. To see this note that by definition
of $\cK_{h,\xi}$, if $\phi'(1)>-\infty$, the same integration by
parts argument yields 
\begin{align*}
\left(\left(\xi-\eta\right)'',\phi\right)+h^{2}\phi(0) & =(\xi-\eta)'(1)\phi(1)+\left(\xi-\eta\right)(0)\phi'(0)+\left(\eta-\xi,-\phi''\right)\geq0.
\end{align*}
We used here that $\eta\in\cK_{h,\xi}$ then $(\xi-\eta)'(1)\geq0$
by \prettyref{lem:K-xi-bv}. Now if $\phi'(1)=-\infty$, we take a
sequence of $\phi_{n}\in\cC$ with $\phi_{n}\to\phi$ in norm with
$\phi'_{n}(1)>-\infty$ (see \prettyref{lem:C-finite-deriv-approx}),
for which the inequality still holds and then pass to the limit.

The duality then follows by \prettyref{lem:abstract-duality-kyfan}
which implies that
\[
\inf_{\phi\in\cC}P(\phi)=\inf_{\phi\in A}\sup_{\sigma\in B}\,K\left(\phi,\sigma\right)=\sup_{\sigma\in B}\inf_{\phi\in A}\,K\left(\phi,\sigma\right)=\sup_{\eta\in\cK_{\xi}}D(\eta'').
\]

\end{proof}

\subsection{Preliminary Analysis of the Primal-Dual Relationship}

In this section we do some preliminary analyses of the Primal and
Dual problems and their relationship which will be used in the subsequent.
\begin{lem}
\label{lem:dual_optimal}$(\phi,\eta)\in\cC\times\cK_{h,\xi}$. The
following are equivalent:
\begin{itemize}
\item $P\left(\phi\right)=D\left(\eta\right)$
\item $\phi^{2}\eta''\left(dx\right)=dx$ and $\int\phi\xi''+h^{2}\phi(0)=\int\phi\eta''$
\item $\phi$ and $\eta$ optimize $P$ and $D$ respectively.
\end{itemize}
Furthermore, if $P(\phi)=D(\eta)$, we have that $\frac{1}{\phi^{2}}\in L^{1}$,
$\eta''<<dx$, $\eta''(dx)=\frac{1}{\phi^{2}}dx$, and 
\begin{align*}
\xi'(1)=\eta'(1) & \text{ or }\phi(1)=0\\
\xi(0)=\eta(0) & \text{ or }\phi'(0)=0.
\end{align*}
\end{lem}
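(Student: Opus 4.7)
The plan is to establish $(1) \iff (2)$ by sandwiching the auxiliary functional $K(\phi,\eta'') = (\xi'' - \eta'', \phi) + S(\eta'') + h^{2}\phi(0)$ between $D(\eta)$ and $P(\phi)$ on the product $\cC \times \cK_{h,\xi}$, and analyzing the two equality cases. The equivalence $(1) \iff (3)$ is then an immediate consequence of \prettyref{thm:duality}, since that theorem identifies $\min P$ with $\max D$ and both extrema are attained.

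For the upper bound $K(\phi,\eta'') \leq P(\phi)$, the variational representation \eqref{eq:D-inf-rep} gives $S(\eta'') \leq (\eta'',\phi) + \int \phi^{-1}\,dx$ directly. The saturation case is the pointwise AM--GM identity $\phi\,\eta''_{ac} + \phi^{-1} \geq 2\sqrt{\eta''_{ac}}$, which forces equality exactly when $\phi^{2}\eta''_{ac} = 1$ a.e.\ and the singular part $\eta''_{s}$ contributes zero. Since $\phi \in \cC$ is concave, non-increasing and non-negative, and since $P(\phi)<\infty$ prevents $\phi$ from vanishing on a set of positive measure, we have $\phi>0$ on $[0,1)$; combined with $\eta''(\{0,1\})=0$, this forces $\eta'' \ll dx$ with density $\phi^{-2}$ and in particular $\phi^{-1} \in L^{1}$.

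For the lower bound $K(\phi,\eta'') \geq D(\eta)$, I would reprise the integration-by-parts computation from the proof of \prettyref{thm:P-D-duality-proof}, using \prettyref{lem:C-finite-deriv-approx} to reduce to $\phi'(1)>-\infty$. This yields the decomposition
\[
(\xi'' - \eta'', \phi) + h^{2}\phi(0) = (\xi'(1) - \eta'(1))\phi(1) + (\xi(0) - \eta(0))\phi'(0) + (\eta - \xi, -\phi''),
\]
in which each term is non-negative: the first by \prettyref{lem:K-xi-bv} together with $\phi(1) \geq 0$; the second because $(\xi-\eta)(0) \leq 0$ and $\phi'(0) \leq 0$; and the third because $\eta \geq \xi$ and $-\phi''$ is a non-negative distribution. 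Equality forces each term to vanish individually, which is precisely the pair of natural boundary alternatives $\{\phi(1)=0 \text{ or } \xi'(1)=\eta'(1)\}$ and $\{\phi'(0)=0 \text{ or } \xi(0)=\eta(0)\}$, together with the support condition $\operatorname{supp}(-\phi'') \subset \{\eta=\xi\}$. This last condition is algebraically equivalent to $\int \phi\,\xi'' + h^{2}\phi(0) = \int \phi\,\eta''$ once one rewrites the vanishing of $(\xi''-\eta'',\phi)+h^{2}\phi(0)$.

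Assembling the pieces: if $P(\phi)=D(\eta)$ then equality must hold in both inequalities, which produces $(2)$ and all the furthermore claims simultaneously. The converse $(2) \Rightarrow (1)$ is a one-line substitution, since $\eta''=\phi^{-2}dx$ gives $D(\eta)=2\int\phi^{-1}\,dx$, while the second identity of $(2)$ turns $P(\phi)=\int\phi\,\xi''+\int\phi^{-1}+h^{2}\phi(0)$ into $2\int\phi^{-1}\,dx$ as well. The only technical obstacle I anticipate is the endpoint behaviour of $\phi$ at $x=1$ in the integration-by-parts step, which is handled by the $\cC$-approximation already invoked in the duality proof; aside from this, the argument is a routine unpacking of the two equality cases.
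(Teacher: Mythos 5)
Your proof is correct and matches the paper's argument in substance: you decompose $P(\phi)-D(\eta)$ as the sum of two nonnegative quantities by sandwiching $K(\phi,\eta'')$ between $D(\eta)$ and $P(\phi)$, which is exactly the paper's $I+II$ decomposition, and you analyze the two equality cases via AM--GM and integration by parts (with the same $\cC$-approximation to handle $\phi'(1)=-\infty$), exactly as the paper does. The only small imprecisions are that you deduce only $\phi^{-1}\in L^{1}$ rather than the stated $\phi^{-2}\in L^{1}$ (though the latter is immediate from $\eta''_{ac}=\phi^{-2}$ being the density of a finite measure), and the sentence claiming the support condition is ``algebraically equivalent'' to $\int\phi\xi''+h^{2}\phi(0)=\int\phi\eta''$ should instead attribute that identity to the vanishing of all three nonnegative terms together, not the support condition alone.
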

\begin{proof}
By the same argument as in \prettyref{thm:P-D-duality-proof}, for
any such pair we have the inequality 
\begin{align*}
I=(\phi,\xi'')-(\phi,\eta'') & +h^{2}\phi(0)\geq0.
\end{align*}
In fact, any such pair must satisfy 
\[
II=\int\frac{1}{\phi}+(\phi,\eta'')-S(\eta'')\geq0.
\]
To see this, observe that 
\[
II=\int\frac{1}{\phi(x)}+\phi(x)\eta''_{ac}(x)-2\sqrt{\eta''_{ac}(x)}dx+\int\phi\eta''_{sing}(dx)
\]
which is non-negative as both integrands are non-negative. Hence,
\begin{equation}
P(\phi)-D(\eta)=I+II\geq0\label{eq:PaboveD}
\end{equation}
with the case of equality if and only if $I=II=0$.

From the statement that $I=0$, we conclude by an integration by parts
and approximation argument (as in the proof of \prettyref{thm:P-D-duality-proof})
that
\begin{align*}
\xi'(1)=\eta'(1) & \text{ or }\phi(1)=0\\
\xi(0)=\eta(0) & \text{ or }\phi'(0)=0.
\end{align*}
Furthermore, we have that $II=0$ if and only if $\phi^{2}\eta''(dx)=dx$.
Indeed, if $II=0$, then $\phi^{2}(x)\eta''_{ac}(x)=1$ $\cL$-a.e.\
and $\text{supp}\,\eta''_{sing}\subset\left\{ \phi=0\right\} $. Since
$\frac{1}{\phi}\in L^{1}$, it follows from monotonicity of $\phi$
that $\eta''_{sing}(dx)=0$. Hence $\phi^{2}\eta''(dx)=dx$ and $\eta''_{ac}=\frac{1}{\phi^{2}}\in L^{1}$.
The reverse direction is clear.

That $P(\phi)=D(\eta)$ if and only if $\phi$ and $\eta$ are optimal
is an immediate consequence of \eqref{eq:PaboveD}.\end{proof}
\begin{lem}
\label{lem:primal-variation}Let $\phi\in\cC$ then for any $\psi\in\cC$,
\[
\frac{d}{d\tau}\vert_{\tau=0^{+}}P(\phi+\tau\psi)=\left(\xi''-\frac{1}{\phi^{2}},\psi\right)+h^{2}\psi(0)
\]
Furthermore, $\phi$ is optimal only if 
\[
(\xi''-\frac{1}{\phi^{2}},\psi)+h^{2}\psi(0)\geq0.
\]
\end{lem}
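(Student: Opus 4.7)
The plan is to compute the one-sided directional derivative term by term: the two linear contributions are immediate, while the $1/\phi$ term will be handled by a monotone convergence argument that relies on the pointwise nonnegativity of $\psi$. The variational inequality will then fall out of the fact that $\cC$ is a convex cone.

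First, I would write
\[
\frac{P(\phi+\tau\psi)-P(\phi)}{\tau} = \int \xi''\psi\,dx + h^2\psi(0) + \int \frac{1}{\tau}\Bigl(\frac{1}{\phi+\tau\psi}-\frac{1}{\phi}\Bigr)dx,
\]
and note that the first two summands are already independent of $\tau$. The main step is to control the third. I would rewrite its integrand as $-g_\tau$ with
\[
g_\tau(x) = \frac{\psi(x)}{\phi(x)\bigl(\phi(x)+\tau\psi(x)\bigr)} \geq 0,
\]
which is well-defined $\cL$-a.e.\ because finiteness of $P(\phi)$ forces $1/\phi\in L^1$, hence $\phi>0$ a.e.\ (and in fact everywhere on $[0,1)$ by monotonicity). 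Since $\psi\geq 0$, the map $\tau\mapsto\phi+\tau\psi$ is pointwise nondecreasing, so $g_\tau\uparrow \psi/\phi^2$ as $\tau\downarrow 0$. Monotone convergence then yields
\[
\lim_{\tau\downarrow 0}\int g_\tau\,dx = \int \frac{\psi}{\phi^2}\,dx \in [0,\infty],
\]
regardless of whether $\psi/\phi^2$ is integrable. Subtracting recovers the claimed right derivative, interpreted as a value in $[-\infty,\infty)$.

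For the variational inequality, the point is that $\cC$ is a convex cone, so $\phi+\tau\psi\in\cC$ for every $\psi\in\cC$ and every $\tau\geq 0$. Since $\psi\geq 0$ one also has $1/(\phi+\tau\psi)\leq 1/\phi$, so $P(\phi+\tau\psi)<\infty$ whenever $P(\phi)<\infty$ (which holds at any minimizer, since $P(1)<\infty$). If $\phi$ is optimal, then the difference quotient is nonnegative for every $\tau>0$, and passing to the limit with the formula above produces $(\xi''-1/\phi^2,\psi)+h^2\psi(0)\geq 0$. The only slightly delicate point in the whole argument is justifying the exchange of limit and integral for the $1/\phi$ term, and it is made routine by the one-sided monotonicity provided by $\psi\geq 0$; no dominated-convergence hypothesis is needed.
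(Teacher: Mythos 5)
Your proposal is correct and follows essentially the same route as the paper: split off the affine terms, and differentiate the nonlinear term $\int 1/(\phi+\tau\psi)$ by rewriting the difference quotient as $-\int \psi/(\phi(\phi+\tau\psi))$ and applying monotone convergence via $\psi\geq 0$, then deduce the inequality from first-order optimality over the convex cone $\cC$. The extra care you take about $\phi>0$ a.e., the possible value $-\infty$, and finiteness of $P$ at a minimizer is a mild refinement of, not a departure from, the paper's argument.
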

\begin{proof}
Notice that it suffices to show that the nonlinear term is right differentiable.
Since $\psi\in\cC$, so is $\phi_{\tau}=\phi+\tau\psi$. Now $\mbox{\ensuremath{\psi\geq}0 }$
so that $1/\phi_{\tau}$ is a non-negative, monotone increasing sequence
of functions. then 
\[
\lim_{\tau\to0^{+}}-\frac{1}{\tau}\int\frac{1}{\phi_{\tau}}-\frac{1}{\phi}dx=\lim_{\tau\to0^{+}}\int\frac{\psi}{\phi_{\tau}\phi}dx=\int\frac{1}{\phi^{2}}\psi dx
\]
by the monotone convergence theorem so that the non-linear term in
$P$ is right differentiable at $\tau=0$. The second claim follows
from first order optimality.\end{proof}
\begin{lem}
\label{lem:phi-inverse-integrable}If $\phi$ optimizes $P(\cdot)$,
then $\frac{1}{\phi^{2}}\in L_{1}.$ In particular, 
\[
\eta(x)=\xi(1)-\int_{x}^{1}\int_{0}^{y}\frac{1}{\phi^{2}}dzdy+h^{2}(1-t)
\]
is in $\cK_{h,\xi}\cap W^{2,1}$.\end{lem}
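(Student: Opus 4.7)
The plan is to reduce the claim to an immediate consequence of the duality theorem \prettyref{thm:duality} and the optimality characterization in \prettyref{lem:dual_optimal}: the explicit formula for $\eta$ given in the statement is precisely the unique solution to the second-order boundary value problem satisfied by the dual optimizer.

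Invoking duality, the maximum of $D$ on $\cK_{h,\xi}$ is attained by some $\eta^\ast \in \cK_{h,\xi}$ with $D(\eta^\ast) = P(\phi) = \min P$. \prettyref{lem:dual_optimal} then yields $1/\phi^2 \in L^1$ together with the representation $\eta^\ast{}''(dx) = \phi^{-2}\, dx$. Coupling this with the membership $\eta^\ast \in \cK_{h,\xi}$ pins down the boundary data $\eta^\ast(1) = \xi(1)$ and $\eta^\ast{}'(0) = \xi'(0) - h^2 = -h^2$, where we used that $\xi'(0) = 0$ since $\xi(t) = \sum_{p\geq 2}\beta_p^2 t^p$. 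Now the $\eta$ defined in the lemma statement satisfies the same ODE $\eta'' = 1/\phi^2$ with precisely these boundary values; integrating $\eta'' = 1/\phi^2$ twice and imposing $\eta'(0) = -h^2$ and $\eta(1) = \xi(1)$ produces exactly the formula written, and uniqueness of iterated antiderivatives forces $\eta = \eta^\ast$. Hence $\eta \in \cK_{h,\xi}$, and $\eta \in W^{2,1}$ follows since $\eta'' \in L^1$.

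The main obstacle is essentially absent once duality and \prettyref{lem:dual_optimal} are available, as the bulk of the work has been carried out there. A self-contained alternative avoiding duality would proceed as follows. The integrability $1/\phi^2 \in L^1$ is obtained by testing the primal variational inequality of \prettyref{lem:primal-variation} against $\psi \equiv 1 \in \cC$, giving the explicit bound $\int_0^1 \phi^{-2}\leq \xi'(1) + h^2$. The obstacle condition $\eta \geq \xi$ is then extracted by testing the same variational inequality against the one-parameter family $\psi_s(t) = \min(1-s,\, 1-t) \in \cC$ for $s \in (0,1)$; integrating by parts twice in the resulting inequality, the boundary terms cancel using $\eta(1) = \xi(1)$ and $\eta'(0) = -h^2$, and the left-hand side rearranges to $\eta(s) - \xi(s)$, delivering the pointwise obstacle condition.
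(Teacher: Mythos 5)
Your primary route is circular in the context of the paper's development. At the point where this lemma appears, duality has only been established in the form $\min_{\cC}P=\sup_{\cK_{h,\xi}}D$ (Theorem \ref{thm:P-D-duality-proof}); the attainment of the supremum is Theorem \ref{thm:(Well-posedness)}, whose proof cites precisely the present lemma. Indeed, the whole purpose of this lemma is to manufacture the dual maximizer explicitly as the formal conjugate of the primal minimizer $\phi$ — that is how the paper upgrades the $\sup$ to a $\max$. So you cannot begin by ``invoking duality'' to produce an $\eta^{*}\in\cK_{h,\xi}$ with $D(\eta^{*})=P(\phi)$ and then feed that pair into Lemma \ref{lem:dual_optimal}: the existence of such an $\eta^{*}$ is essentially the statement being proved. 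One could imagine repairing this by establishing attainment independently (e.g.\ $\cK_{h,\xi}$ is equi-Lipschitz by Lemma \ref{lem:K-xi-bv} and $D$ is upper semi-continuous by Corollary \ref{cor:D-usc}, so a compactness argument is plausible), but neither you nor the paper carries this out, and it would be a genuinely different proof.

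Your ``self-contained alternative,'' by contrast, is exactly the paper's argument and is correct; it should be promoted from fallback to the actual proof. Testing the variational inequality of Lemma \ref{lem:primal-variation} with $\psi\equiv1$ gives $\int\phi^{-2}\leq\int_{0}^{1}\xi''+h^{2}=\xi'(1)+h^{2}<\infty$ (using $\xi'(0)=0$), which is the integrability claim. The explicit $\eta$ then visibly satisfies $\eta''=\phi^{-2}\in L^{1}$, $\eta'(0)=-h^{2}=\xi'(0)-h^{2}$, $\eta(1)=\xi(1)$, and is convex, so only the obstacle condition remains; testing with $\psi_{s}(t)=\min(1-s,1-t)\in\cC$, which has $\psi_{s}(1)=0$, $\psi_{s}'(0)=0$, $\psi_{s}''=-\delta_{s}$, and integrating by parts via Lemma \ref{lem:(Integration-by-parts)} (legitimate since $\psi_{s}'\in BV$ and $\eta'\in BV$), the boundary terms vanish and the inequality collapses to $\eta(s)\geq\xi(s)$ for $s\in(0,1)$, extending to $[0,1]$ by continuity. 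This is precisely the family of test functions the paper describes abstractly as ``$\psi$ with $\psi(1)=0$, $\psi'(0)=0$, $\psi''=-\delta_{t}$.''
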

\begin{proof}
Fix $\nu\in\cM_{+}$ and let $\psi(t)=a+c(1-t)+\int_{t}^{1}\nu[0,s]dt$
then $\psi\in\cC$ so that by \prettyref{lem:primal-variation}, 
\[
\frac{d}{d\tau}\vert_{\tau=0}P(\phi+\tau\psi)=\int(\xi''-\frac{1}{\phi^{2}})\psi dt+h^{2}a\geq0
\]
Choosing $a=1,c=0,$ and $\nu=0$ gives 
\[
\int\xi''-\frac{1}{\phi^{2}}dt+h^{2}\geq0
\]
so that 
\[
h^{2}+\int\xi''\geq\int\frac{1}{\phi^{2}}\geq0.
\]
 (To avoid adding infinities, subtract $h^{2}+\int\xi''$ from both
sides and use the a priori sign on $-\frac{1}{\phi^{2}}$.) 

$\eta(x)$ is plainly continuous, convex, and has the correct boundary
data. It suffices to show that $\eta\geq\xi$. Observe that for any
$\psi\in\cC$ with $\psi'(1)>-\infty$, we have 
\[
0\leq\left(\psi,\xi''-\frac{1}{\phi^{2}}\right)+h^{2}\psi(0)=\psi(1)(\xi'-\eta')(1)+\psi'(0)(\xi-\eta)(0)+\left(\psi'',\xi-\eta\right)
\]
by \prettyref{lem:primal-variation}. Taking $\psi$ with $\psi(1)=0,\psi'(0)=0$
and $\psi''=-\delta_{t}$ for $t\in(0,1)$ shows that $\eta(t)\geq\xi(t).$
Then inequality then extends by continuity. Thus $\eta\in\cK_{h,\xi}$.
That $\eta\in W^{2,1}$ is immediate. \end{proof}
\begin{thm}
\label{thm:(Well-posedness)}(Well-posedness) The Dual problem is
well-posed. In particular, 
\[
\sup_{\eta\in\cK_{h,\xi}}D(\eta)=\max_{\eta\in\cK_{h,\xi}}\,D\left(\eta\right).
\]
Furthermore, $\eta$ is unique.\end{thm}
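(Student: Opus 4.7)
My plan is to obtain existence by constructing the dual maximizer from the (already available) primal minimizer, and to obtain uniqueness from the pointwise identification in \prettyref{lem:dual_optimal}.

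First I note that $P$ admits a unique minimizer $\phi_{*}\in\cC$: existence follows from the correspondence $\cC\leftrightarrow\cA$ together with the lower semi-continuity and compact sublevel sets of $GS_{h,\xi}$ already noted in the introduction, and uniqueness from the strict convexity of $\phi\mapsto\int\tfrac{1}{\phi}$. Given $\phi_{*}$, \prettyref{lem:phi-inverse-integrable} provides a function
\[
\eta_{*}(x)=\xi(1)-\int_{x}^{1}\int_{0}^{y}\frac{dz}{\phi_{*}(z)^{2}}\,dy+h^{2}(1-x)\ \in\ \cK_{h,\xi}\cap W^{2,1},
\]
whose second distributional derivative is $\eta_{*}''(dx)=\frac{1}{\phi_{*}^{2}}\,dx$. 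In particular $\phi_{*}^{2}\eta_{*}''(dx)=dx$, so by \prettyref{lem:dual_optimal} it suffices to verify the supplementary identity $\int\phi_{*}\xi''+h^{2}\phi_{*}(0)=\int\phi_{*}\eta_{*}''=\int\tfrac{1}{\phi_{*}}$; once this holds, $P(\phi_{*})=D(\eta_{*})$, and combined with the weak duality $D(\eta_{*})\leq\sup D=\min P=P(\phi_{*})$ from \prettyref{thm:duality} this forces $\eta_{*}$ to realize the supremum.

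To obtain the missing identity, I would test the optimality of $\phi_{*}$ along the scaling ray $t\mapsto t\phi_{*}$. The ray lies in $\cC$ for all $t>0$, and along it $P$ reduces to the explicit smooth function $g(t)=t\int\xi''\phi_{*}+(1/t)\int\tfrac{1}{\phi_{*}}+th^{2}\phi_{*}(0)$, whose minimum over $t>0$ must be attained at $t=1$; the equation $g'(1)=0$ is exactly the desired identity. This is the one place where \prettyref{lem:primal-variation} is not sufficient by itself (that lemma gives only a one-sided inequality), and it is the main technical step, though essentially elementary once one notices that $\cC$ is a cone.

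For uniqueness, suppose $\eta_{1},\eta_{2}\in\cK_{h,\xi}$ both attain the maximum. Then $P(\phi_{*})=D(\eta_{1})=D(\eta_{2})$, so applying \prettyref{lem:dual_optimal} to each pair $(\phi_{*},\eta_{i})$ yields $\phi_{*}^{2}\eta_{i}''(dx)=dx$ for $i=1,2$. Hence $\eta_{1}''=\eta_{2}''$ as Radon measures on $(0,1)$. Both functions share the boundary data $\eta(1)=\xi(1)$ and $\eta'(0)=\xi'(0)-h^{2}$ built into $\cK_{h,\xi}$, so \prettyref{lem:meas-BVP-unique} (already invoked in the proof of \prettyref{thm:P-D-duality-proof} to reconstruct $\eta$ from $\eta''$) forces $\eta_{1}=\eta_{2}$.
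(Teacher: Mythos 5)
Your proposal is correct and follows essentially the same route as the paper, whose (very terse) proof likewise combines the uniqueness of the primal minimizer with \prettyref{lem:dual_optimal} and \prettyref{lem:phi-inverse-integrable}. Your scaling-ray computation $g'(1)=0$ along the cone direction $t\mapsto t\phi_{*}$ is a clean way to supply the identity $\int\xi''\phi_{*}+h^{2}\phi_{*}(0)=\int\tfrac{1}{\phi_{*}}$ that the paper leaves implicit, and your uniqueness argument via \prettyref{lem:meas-BVP-unique} matches the intended one.
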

\begin{proof}
This from the fact that $P$ has a unique minimizer combined with
\prettyref{lem:dual_optimal} and \prettyref{lem:phi-inverse-integrable}.
\end{proof}

\section{Regularity theory for the optimizers\label{sec:Regularity-theory-for} }

In this section we prove \prettyref{thm:(a-priori-estimates)} and
\prettyref{thm:wiggles-principle}. In this section, $\phi$ and $\eta$
refer \textbf{exclusively} to the optimizers. (That there exists a
unique optimizer $\eta$ was proved in the previous section.)

\subsection{Primal and Dual Regularity}

Our goal in this section is to prove \prettyref{thm:(a-priori-estimates)}.
We will prove this by first getting weaker regularity using the above
and then we will upgrade this regularity. 

Notice that by \prettyref{lem:dual_optimal}, we have the following
lemma.
\begin{lem}
\label{lem:dual-reg}We have that
\begin{enumerate}
\item $\phi>0$ on $[0,1)$
\item $\eta\in C_{loc}^{2}\left((0,1)\right)\cap C^{1}([0,1])$.
\item $\frac{1}{\sqrt{\eta''}}=\phi$ on $(0,1)$
\item on the set $\left\{ \xi=\eta\right\} \cap(0,1)$, $\eta'(t)=\xi'(t)$,
and $\eta''\left(t\right)\geq\xi''\left(t\right).$
\end{enumerate}
\end{lem}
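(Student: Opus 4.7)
The plan is to unpack each of the four claims directly from the characterization of the unique optimal pair $(\phi,\eta)$ provided by \prettyref{lem:dual_optimal}. That lemma supplies three key facts: $\phi^{2}\,\eta''(dx)=dx$; $\eta''\ll\cL$ with density $\eta''_{ac}=1/\phi^{2}\in L^{1}$; and the natural boundary conditions. Every item below follows either from these facts directly or from the regularity they force on $\phi$.

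For claim (1), I would argue by contradiction. If $\phi(t_{0})=0$ for some $t_{0}\in[0,1)$, then since $\phi\in\cC$ is non-negative and non-increasing, $\phi\equiv 0$ on $[t_{0},1]$, a set of positive Lebesgue measure. Hence $1/\phi^{2}\equiv+\infty$ there, contradicting $1/\phi^{2}\in L^{1}$. Claim (3) is then immediate: on $(0,1)$ one has $\phi>0$ and $\eta''=1/\phi^{2}$ pointwise, so $\sqrt{\eta''}=1/\phi$.

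For claim (2), continuity of $\phi$ on $[0,1]$ (from $\phi\in\cC$) combined with $\phi>0$ on $[0,1)$ makes $1/\phi^{2}$ continuous on every compact subset of $(0,1)$; hence $\eta''\in C_{loc}((0,1))$, so $\eta\in C_{loc}^{2}((0,1))$. Integrability $\eta''\in L^{1}$ makes $\eta'$ absolutely continuous on $[0,1]$, and together with $\eta\in C([0,1])$ (built into the definition of $\cK_{h,\xi}$) this yields $\eta\in C^{1}([0,1])$.

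Finally, for claim (4): at any $t\in\{\xi=\eta\}\cap(0,1)$, the function $u:=\eta-\xi$ is non-negative on $[0,1]$ (since $\eta\in\cK_{h,\xi}$) and vanishes at $t$, so $t$ is an interior minimum of $u$. Both $\eta$ and $\xi$ are $C^{2}$ near $t$ (the former by (2), the latter because the power series $\xi$ converges on $[0,1+\epsilon]$), so the classical first- and second-order minimum conditions give $u'(t)=0$ and $u''(t)\geq 0$, which are precisely the asserted identities. The only non-cosmetic step in the whole argument is the use of $L^{1}$-integrability plus monotonicity of $\phi$ to rule out interior zeros; everything else is a direct repackaging of the duality in \prettyref{lem:dual_optimal}.
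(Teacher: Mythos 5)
Your proof is correct and follows essentially the same path as the paper's, which is terse on (1)--(3) (attributing them to Lemma \ref{lem:dual_optimal} plus monotonicity) and proves (4) via the sub-differential inclusion $\partial_-\eta(t)\le\xi'(t)\le\partial_+\eta(t)$ together with a Taylor expansion with Lagrange remainder; your invocation of the classical first- and second-order interior-minimum conditions for a $C^2$ function is the same idea in standard-calculus form. The one point worth flagging is a minor ordering subtlety: in your item (3) you assert $\eta''=1/\phi^2$ ``pointwise'' before establishing (2), but that identity is a priori only an $L^1$-density statement; it upgrades to pointwise equality precisely because $1/\phi^2$ is continuous on compacts of $(0,1)$, i.e., after the $C^2_{loc}$ claim in (2) is in place. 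Since you immediately supply that argument, the logic is sound, but presenting (2) before (3), or saying the $L^1$ identity forces $\eta''\in C_{loc}((0,1))$ and hence the identity holds everywhere, would avoid the appearance of circularity.
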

\begin{proof}
Begin by observing that $(1)-(3)$ follow from \prettyref{lem:dual_optimal}
and monotonicity of $\phi$.

To see $(4)$, fix $t\in(0,1)$ a contact point. Begin by recalling
that \textbf{$\eta$ }is left/right differentiable and lies above
$\xi$ by definition of $\cK$, so $\xi'(t)$ is in the sub-differential
\[
\partial_{-}\eta(t)\leq\xi'(t)\leq\partial_{+}\eta(t)
\]
but $\eta\in C_{loc}^{2}((0,1))$ so these are in fact equalities.
Furthermore if we define $g=\eta-\xi\geq0$, then for $\epsilon$
sufficiently small, 
\[
g\left(t+\epsilon\right)=g\left(t\right)+g'\left(t\right)\epsilon+\frac{g''\left(\tau\right)}{2}\epsilon^{2}=\frac{g''\left(\tau\right)}{2}\epsilon^{2}
\]
for some $\tau$ between $t$ and $t+\epsilon$ where the second equality
follows by the observations follows by the argument earlier in the
lemma. Since $g\in C_{loc}^{2}((0,1))$ and $g\geq0$ we conclude
that $g''\left(t\right)\geq0$ by taking $\epsilon\to0$.\end{proof}
\begin{lem}
\label{lem:eta-dist-diff} We have that $\left(\frac{1}{\sqrt{\eta''}}\right)''=-\mu$
in the sense of $\cD'$ where $\mu$ is a positive Radon measure on
$(0,1)$ which has support in $\mbox{\{\ensuremath{\xi}=\ensuremath{\eta}\}}$. \end{lem}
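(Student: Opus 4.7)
The plan is to identify $\mu$ with $-\phi''$ via the relation $\phi = 1/\sqrt{\eta''}$ supplied by \prettyref{lem:dual-reg}(3), and then to localize the support of $\mu$ by perturbing $\eta$ inside the obstacle-slack region $\{\eta>\xi\}$.

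First I would observe that because $\phi\in\cC$ is continuous, non-increasing, and concave on $[0,1]$, its distributional second derivative $\phi''$ is a non-positive Radon measure on $(0,1)$ (the standard distributional characterization of concavity on an interval). Setting $\mu := -\phi''$ therefore gives a non-negative Radon measure on $(0,1)$, and combining this with $\phi = 1/\sqrt{\eta''}$ on $(0,1)$ immediately yields the identity $(1/\sqrt{\eta''})'' = -\mu$ in $\cD'((0,1))$.

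The remaining content is the support statement, and here the plan is to use that $\eta$ optimizes $D$ on $\cK_{h,\xi}$. Fix a relatively compact open subset $U\Subset\{\eta>\xi\}\cap(0,1)$ and a test function $\chi\in C_c^{\infty}(U)$. Since by \prettyref{lem:dual-reg}(2), $\eta\in C^{2}_{loc}((0,1))$ with $\eta''=1/\phi^{2}>0$ on $U$, and since $\eta-\xi$ is continuous and strictly positive on $U$, on $\text{supp}\,\chi$ we have both $\eta''\geq c>0$ and $\eta-\xi\geq c'>0$. For $|\tau|$ sufficiently small, $\eta+\tau\chi$ is therefore $C^{2}$, convex, and dominates $\xi$, while the boundary conditions at $0$ and $1$ are preserved because $\chi$ is compactly supported in $(0,1)$; hence $\eta+\tau\chi\in\cK_{h,\xi}$ for both signs of $\tau$. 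Differentiating $D$ under the integral (legitimate by the uniform lower bound on $\eta''$ over $\text{supp}\,\chi$ together with the smoothness of $\sqrt{\cdot}$ away from zero) and using optimality of $\eta$, I would obtain
\[
0 \;=\; \frac{d}{d\tau}\Big|_{\tau=0} D(\eta+\tau\chi) \;=\; \int \frac{\chi''}{\sqrt{\eta''}}\,dx \;=\; (\phi,\chi''),
\]
which in distributional notation says $\phi''=0$ in $\cD'(U)$, i.e. $\mu(U)=0$. Since this holds for every such $U$, inner regularity of $\mu$ yields $\mu(\{\eta>\xi\}\cap(0,1))=0$, so $\text{supp}\,\mu\subset\{\xi=\eta\}$.

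The main obstacle is the admissibility of two-sided perturbations, which is what forces the restriction to compactly contained subsets of $\{\eta>\xi\}\cap(0,1)$: the perturbation $\eta+\tau\chi$ must stay convex (requiring strict positivity of $\eta''$ on $\text{supp}\,\chi$) and must remain above the obstacle (requiring $\eta>\xi$ strictly on $\text{supp}\,\chi$). Both slacks are supplied by $C^{2}_{loc}$ regularity of $\eta$ from \prettyref{lem:dual-reg}(2) and the choice $U\Subset\{\eta>\xi\}$, so no further work beyond a careful bookkeeping of constants is required.
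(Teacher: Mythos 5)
Your proposal is correct and follows essentially the same route as the paper: identify $\mu=-\phi''$ via the conjugacy $\phi=1/\sqrt{\eta''}$ and concavity of $\phi$ (the paper invokes Alexandrov's theorem for this), then localize the support by a two-sided bump perturbation of $\eta$ inside $\{\eta>\xi\}$, using the strict positivity of both $\eta''$ and $\eta-\xi$ on the support of the bump to stay in $\cK_{h,\xi}$, and concluding $(\phi,\chi'')=0$ from first-order optimality of $\eta$ for $D$. Your additional remark about inner regularity just makes explicit the final passage from vanishing on compactly contained open sets to the support statement, which the paper leaves implicit.
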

\begin{proof}
By \prettyref{lem:dual-reg}, $\frac{1}{\sqrt{\eta''}}$ is concave
so that the statement 
\[
\left(\frac{1}{\sqrt{\eta''}}\right)''=-\mu
\]
follows from Alexandrov's theorem (\prettyref{thm:(Modified-Alexandrov)}).
To find the condition on the support, it suffices to consider the
case that the contact set is not the whole interval. Then there is
an $\epsilon_{0}>0$ and an open set $U$ with $U\subset\subset\{\eta-\xi>\epsilon_{0}\}$.
Let $\sigma$ be a bump function that is localized in $U$ . Note
that we can take $U$ to be away from $\{0,1\}$. Now $\eta\in C_{loc}^{2}(0,1)$
by \prettyref{lem:dual-reg} so that it is $C^{2}(\bar{U})$. Furthermore
$\eta''(x)\geq\inf_{U}\frac{1}{\phi^{2}}>0$. Thus one can directly
verify that $\eta+\tau\sigma\in K_{\xi}$ for $\abs{\tau}$ sufficiently
small. This implies 
\[
\frac{d}{d\tau}\vert_{\tau=0}S(\eta+\tau\sigma)=\left(\frac{1}{\sqrt{\eta''}},\sigma''\right)=0
\]
by the optimality of $\eta$ and the fact that this is a full derivative
(i.e. not just a right or left derivative). The result then follows
by the definition of distributional derivatives.\end{proof}
\begin{lem}
\label{lem:mass-bounds}There is a $c>0$ with $0<c\leq\phi.$ In
particular $\eta\in C^{2}([0,1])$, and the second derivative has
the strict lower bound $\eta''(x)>c^{-2}$.\end{lem}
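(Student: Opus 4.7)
The plan is to first establish the pointwise lower bound $\phi(1)>0$, from which the remaining assertions follow by monotonicity and continuity. The main strategy is a case analysis on the contact set $J=\{\eta=\xi\}$ in a left-neighborhood of $1$, using the integrability $\phi^{-2}\in L^{1}$ from \prettyref{lem:phi-inverse-integrable} together with the affineness of $\phi$ off of $J$ that comes from \prettyref{lem:eta-dist-diff}.

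Suppose for contradiction that $\phi(1)=0$. I would consider three sub-cases. Sub-case (i): there is $\delta>0$ with $[1-\delta,1]\subset J$. Then $\eta\equiv\xi$ near $1$, so $\eta''=\xi''$ pointwise there (using the $C_{loc}^{2}$ regularity of $\eta$ from \prettyref{lem:dual-reg}), whence $\phi=1/\sqrt{\xi''}$ on $(1-\delta,1)$; passing to the limit $t\to 1^{-}$ gives $\phi(1)=1/\sqrt{\xi''(1)}>0$, a contradiction. Sub-case (ii): $[0,1]\setminus J$ has a maximal component of the form $(a,1)$. Then \prettyref{lem:eta-dist-diff} gives $\mu=0$ on $(a,1)$, so $\phi''=0$ and $\phi$ is affine; combined with $\phi(1)=0$ this forces $\phi(t)=k(1-t)$ on $(a,1)$, so $\int_{a}^{1}\phi^{-2}\,dt$ diverges, contradicting \prettyref{lem:phi-inverse-integrable}.

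Sub-case (iii) is the crux: there are infinitely many components $(a_{n},b_{n})\subset[0,1]\setminus J$ with $b_{n}\to 1^{-}$, and by disjointness $a_{n}\to 1$ as well. On each $(a_{n},b_{n})$, $\phi$ is affine for the same reason as in (ii). Using the matching-slope identity $\eta'=\xi'$ at contact points from \prettyref{lem:dual-reg}, I would write $\int_{a_{n}}^{b_{n}}\xi''\,dt=\xi'(b_{n})-\xi'(a_{n})=\eta'(b_{n})-\eta'(a_{n})=\int_{a_{n}}^{b_{n}}\phi^{-2}\,dt$; a direct calculation using the affine form of $\phi$ on $(a_{n},b_{n})$ evaluates this last integral as $(b_{n}-a_{n})/(\phi(a_{n})\phi(b_{n}))$. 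Boundedness of $\xi''$ on $[0,1]$ by some $M$ then yields $\phi(a_{n})\phi(b_{n})\geq 1/M>0$ for all $n$, which contradicts $\phi(a_{n}),\phi(b_{n})\to\phi(1)=0$ by continuity of $\phi$.

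Once $\phi(1)>0$, I would set $c=\phi(1)$ and use monotonicity of $\phi\in\cC$ to conclude $\phi\geq c$ on $[0,1]$. Then $\eta''=\phi^{-2}$ on $(0,1)$ extends to a continuous, strictly positive function on $[0,1]$, so $\eta\in C^{2}([0,1])$ and the pointwise lower bound on $\eta''$ follows from the upper bound $\phi\leq\phi(0)<\infty$. The main obstacle is clearly sub-case (iii), where one genuinely needs both the primal-dual matching of slopes at contact points and the continuity of $\xi''$ to close the argument.
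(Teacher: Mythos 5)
Your proof is correct and follows essentially the same case analysis as the paper's: either contact points accumulate at $1$, in which case the slope matching $\eta'=\xi'$ at those points together with the conjugacy $\phi=1/\sqrt{\eta''}$ pins $\phi(1)$ near $1/\sqrt{\xi''(1)}>0$, or else some interval $(a,1)$ is contact-free, $\phi$ is affine there, and integrability of $\phi^{-2}$ rules out $\phi(1)=0$. The only difference is cosmetic: where you use the integral identity $\int_{a_n}^{b_n}\phi^{-2}\,dt=(b_n-a_n)/(\phi(a_n)\phi(b_n))$ on each gap, the paper applies the mean value theorem to $\eta'-\xi'$ between consecutive contact points to produce points where $\eta''=\xi''$ exactly, which handles your sub-cases (i) and (iii) in one stroke.
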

\begin{proof}
We do a case analysis. Suppose there exists a sequence of interior
contact points $\left\{ t_{n}\right\} $ with $t_{n}\uparrow1$. Then
by \prettyref{lem:dual-reg}, you have that $\eta'\left(t_{n}\right)=\xi'\left(t_{n}\right)$
for all $n$, so then by mean value theorem there exists a sequence
of points $\left\{ \tilde{t}_{n}\right\} $ with $\tilde{t}_{n}\uparrow1$
at which $\eta''\left(\tilde{t}_{n}\right)=\xi''\left(\tilde{t}_{n}\right)$.
Then by conjugacy you have that $\phi^{2}\left(\tilde{t}_{n}\right)=\frac{1}{\sqrt{\xi''\left(\tilde{t}_{n}\right)}}$
and therefore $\phi^{2}\left(1\right)=\frac{1}{\sqrt{\xi''\left(1\right)}}$
which gives a lower bound on $\phi$. 

On the other hand, if no such sequence of contact points exists, then
there is an interval of the form $\left(a,1\right)\subset\left\{ \xi<\eta\right\} $.
By \prettyref{lem:eta-dist-diff}, $\left(\frac{1}{\sqrt{\eta''}}\right)''=0$
in the sense of distributions on $\left(a,1\right)$. By \prettyref{lem:dual-reg}
and elementary properties of distributions \cite{LiebLoss2001}, it
follows that $\phi$ is linear on $\left(a,1\right)$. Combined with
finite energy you conclude that $\phi\left(1\right)\neq0$. This gives
a lower bound on $\phi$ by monotonicity. That $\eta\in C^{2}$ then
follows by a continuity argument.\end{proof}
\begin{lem}
\label{lem:m-finite} We have that $\phi'(1)>-\infty$. Thus, $\phi\in C([0,1])\cap\{\phi'\in BV((0,1))\}$.
For the corresponding $d\nu=m(t)dt+c\delta_{1}\in\cA$, we have that
$m(1-)<\infty$. \end{lem}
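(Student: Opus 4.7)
The plan is to show that $\phi'(1^-) > -\infty$; from this the remaining claims follow automatically, since $\phi'$ is non-increasing on $(0,1)$ with $\phi'(0^+)$ bounded (via the concavity bound $|\phi'(s)| \leq \phi(0)/s$ used in earlier lemmas), so $\phi' \in BV((0,1))$ as soon as $\phi'(1^-)$ is finite, and then $m(1^-) = -\phi'(1^-) < \infty$ through the correspondence $\cA \leftrightarrow \cC$. The proof will use four ingredients from the preceding lemmas: the identity $\phi = 1/\sqrt{\eta''}$ on $(0,1)$ (\prettyref{lem:dual-reg}), the regularity $\eta \in C^2([0,1])$ with $\eta'' \geq c^{-2} > 0$ (\prettyref{lem:mass-bounds}), the contact relations $\eta'(t) = \xi'(t)$ and $\eta''(t) \geq \xi''(t)$ at interior contact points (\prettyref{lem:dual-reg}(4)), and the support condition $\mathrm{supp}\,\mu \subset \{\xi = \eta\}$ (\prettyref{lem:eta-dist-diff}). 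I split into two cases according to the structure of the contact set near $t=1$.

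In the first case, suppose there is $a \in (0,1)$ such that $(a,1) \cap \{\xi = \eta\} = \emptyset$. Then $\mu \equiv 0$ on $(a,1)$, so $\phi'' = 0$ in $\cD'$ on this interval, and $\phi$ is affine on $(a,1)$. Hence $\phi'(1^-)$ is simply the finite slope of this affine piece.

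In the second case there is a sequence of contact points $t_n \uparrow 1$. Since $\eta' - \xi'$ is $C^1$ on $[0,1]$ and vanishes at each $t_n$, Rolle's theorem applied between consecutive $t_n$'s produces a sequence $\tilde t_n \to 1$ with $\eta''(\tilde t_n) = \xi''(\tilde t_n)$; continuity then yields $\eta''(1) = \xi''(1)$, and hence $\phi(1) = 1/\sqrt{\xi''(1)}$. Because the power series coefficients of $\xi$ are non-negative, $\xi''$ is non-decreasing on $[0,1]$, so $\xi''(t_n) \leq \xi''(1)$. Combining this with the contact bound $\eta''(t_n) \geq \xi''(t_n)$ and the Lipschitz property of $1/\sqrt{\xi''}$ near $t=1$ (where $\xi''$ is smooth and strictly positive), there is a constant $K$ such that
\[
0 \leq \phi(t_n) - \phi(1) = \frac{1}{\sqrt{\eta''(t_n)}} - \frac{1}{\sqrt{\xi''(1)}} \leq \frac{1}{\sqrt{\xi''(t_n)}} - \frac{1}{\sqrt{\xi''(1)}} \leq K(1 - t_n).
\]

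I will then propagate this estimate at the $t_n$ to a pointwise lower bound on $\phi'$ via concavity. The tangent inequality $\phi(1) \leq \phi(t_n) + \phi'(t_n^+)(1 - t_n)$ rearranges to $\phi'(t_n^+) \geq -[\phi(t_n) - \phi(1)]/(1 - t_n) \geq -K$. Since $\phi'$ is non-increasing and $t_n \uparrow 1$, for every $s < 1$ I may choose $t_n > s$ to deduce $\phi'(s^+) \geq \phi'(t_n^+) \geq -K$; letting $s \to 1^-$ then gives $\phi'(1^-) \geq -K > -\infty$. The hard part is the second case, and in particular the Rolle-theorem identification $\eta''(1) = \xi''(1)$: without first pinning down the boundary value $\phi(1) = 1/\sqrt{\xi''(1)}$, the contact inequality alone would not close into a Lipschitz bound for $\phi - \phi(1)$ at $t=1$.
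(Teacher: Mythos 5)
Your proof is correct and follows essentially the same route as the paper's: the same case split (contact points accumulating at $1$ versus $1$ isolated in the contact set), the same Rolle/mean-value step producing $\tilde t_n$ with $\eta''(\tilde t_n)=\xi''(\tilde t_n)$ and hence $\eta''(1)=\xi''(1)$, and the same input facts from \prettyref{lem:dual-reg}, \prettyref{lem:eta-dist-diff}, and \prettyref{lem:mass-bounds}. The only difference is cosmetic: the paper converts the question into finiteness of $\eta'''(1)$ via $\eta'''(1)=-2\phi'(1)/\phi^{3}(1)$ and evaluates that limit along the mean-value points, whereas you bound the difference quotient of $\phi$ directly at the contact points using $\eta''(t_n)\ge\xi''(t_n)$ and the local Lipschitz bound on $1/\sqrt{\xi''}$ — both computations rest on identical facts.
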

\begin{proof}
Note that by \prettyref{lem:dual-reg} and \prettyref{lem:mass-bounds},
we have that $\eta''(x)=\frac{1}{\phi^{2}(x)}$ for all $x\in[0,1]$.
Thus, $\eta''$ is left-differentiable at $1$ (with possibly infinite
value) and in particular, 
\begin{align*}
\eta'''(1) & =\lim_{t\to1^{-}}\frac{\eta''(1)-\eta''(t)}{1-t}=\lim_{t\to1^{-}}\frac{\frac{1}{\phi^{2}(1)}-\frac{1}{\phi^{2}(t)}}{1-t}\\
 & =\lim_{t\to1^{-}}\frac{\phi^{2}(t)-\phi^{2}(1)}{(1-t)\phi^{2}(1)\phi^{2}(t)}=\lim_{t\to1^{-}}\frac{(\phi(t)-\phi(1))}{(1-t)}\frac{(\phi(t)+\phi(1))}{\phi^{2}(1)\phi^{2}(t)}\\
 & =-\phi'(1)\frac{2}{\phi^{3}(1)}.
\end{align*}
Since $\phi^{3}(1)\in(0,\infty)$, we see that the first claim is
equivalent to proving that $\eta'''(1)<\infty$. 

Now we proceed by a case analysis. Suppose first that $1$ is an isolated
contact point. Then as in the proof of \prettyref{lem:mass-bounds}
we conclude that $\phi$ is linear on an interval of the form $(1-\delta,1)$.
It follows immediately that $\phi'(1)>-\infty$.

Suppose now that $1$ is not an isolated contact point. Then there
is a monotone sequence $t_{i}\in(0,1)$ such that $t_{i}\uparrow1$
and $\eta(t_{i})=\xi(t_{i})$. By \prettyref{lem:dual-reg}, we have
that $\eta'(t_{i})=\xi'(t_{i})$ and that $\eta''(t_{i})\geq\xi''(t_{i})$.
This easily implies that $\eta'''(1)=\xi'''(1)<\infty$. To see this,
observe that by the mean value theorem there is a sequence $\tau_{i}\in(t_{i},t_{i+1})$
such that $\eta''(\tau_{i})=\xi''(\tau_{i})$. Then, $\eta''(1)=\xi''(1)$
since $\eta\in C^{2}([0,1])$. Hence,
\begin{align*}
\eta'''(1) & =\lim_{t\to1}\frac{\eta''(1)-\eta''(t)}{1-t}=\lim_{i\to\infty}\frac{\eta''(1)-\eta''(\tau_{i})}{1-\tau_{i}}\\
 & =\lim_{i\to\infty}\frac{\xi''(1)-\xi''(\tau_{i})}{1-\tau_{i}}=\xi'''(1)
\end{align*}
as required.

Having shown that $\phi'(1)>-\infty$, we immediately conclude that
$\phi'\in BV$ (see \prettyref{lem:phi'finite}). The last claim follows
from the correspondence between $\cA$ and $\cC$, which gives that
$\phi(t)=\int_{t}^{1}m(t)+c$ for all $t\in[0,1]$ and hence that
$m(1^{-})=-\phi'(1)<\infty$.
\end{proof}

\subsubsection{Proof of \prettyref{thm:(a-priori-estimates)}}
\begin{proof}
This follows from applying Lemmas \ref{lem:dual_optimal}, \ref{lem:dual-reg}, \ref{lem:eta-dist-diff}, \ref{lem:mass-bounds}, and
\ref{lem:m-finite}.
\end{proof}

\subsection{Regularity of the contact set}

In this section we prove \prettyref{thm:wiggles-principle}. This result, which we restate below for the convenience of the reader, provides
a simple test which characterizes which sub-intervals of $[0,1]$
are compatible with FRSB and which are compatible with $kRSB$. 
\begin{thm*}\textbf{\ref{thm:wiggles-principle}.}
Let 
\[
\mathfrak{\mathfrak{d}}(t)=\left(\frac{1}{\sqrt{\xi''}}\right)''(t)
\]
Then we have the following two cases
\begin{enumerate}
\item If $\mathfrak{d}(t)>0$ on $\left(a,b\right)\subset[0,1]$, then there
are at most two contact points in $\left[a,b\right]$.
\item If $\mathfrak{d}(t)\leq0$ on $[a,b]\subset(0,1]$, then if there
are two contact points $t_{1},t_{2}\in\left[a,b\right]$, then $\left[t_{1},t_{2}\right]\subset\left\{ \eta=\xi\right\} $.
\end{enumerate}
\end{thm*}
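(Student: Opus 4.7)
The plan is to study the auxiliary function $h(t) = \phi(t) - \psi(t)$ on $[a,b]$, where $\psi(t) = 1/\sqrt{\xi''(t)}$. Two facts drive the argument. First, at any contact point $t$, \prettyref{thm:(a-priori-estimates)} gives $\eta''(t) \geq \xi''(t)$, so $\phi(t) = 1/\sqrt{\eta''(t)} \leq \psi(t)$ and hence $h(t) \leq 0$. Second, since the measure $\mu = -\phi''$ is supported on the contact set, $\phi$ is affine on each maximal open sub-interval of the complement of the contact set; on such a sub-interval one has $h'' = -\psi'' = -\mathfrak{d}$ as distributions.

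For case~(1), assume for contradiction that $[a,b]$ contains three contact points $t_1 < t_2 < t_3$. Without loss of generality these are consecutive, so $\phi$ is affine on $(t_1,t_2)$ and on $(t_2,t_3)$ with at most a downward corner at $t_2$ coming from the concavity of $\phi$. Thus $h$ is concave on $[t_1,t_3]$, with $h'' = -\mathfrak{d} < 0$ strictly on each affine piece. Since $g := \eta - \xi$ satisfies $g'(t_i) = 0$ at each contact point, Rolle's theorem applied to $g'$ on $(t_1,t_2)$ and on $(t_2,t_3)$ yields points $\tau_1 \in (t_1,t_2)$ and $\tau_2 \in (t_2,t_3)$ at which $\eta'' = \xi''$, i.e., $h(\tau_i) = 0$. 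Concavity above the secant gives $h \geq 0$ on $[\tau_1,\tau_2]$, so $h(t_2) \geq 0$; combined with $h(t_2) \leq 0$ this forces $h(t_2) = 0$. But a concave function equal to its secant value at an interior point must be affine on the entire secant interval, contradicting $h'' = -\mathfrak{d} < 0$ on $(\tau_1,t_2)$.

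For case~(2), suppose $t_1,t_2 \in [a,b]$ are contact points and assume for contradiction that $\eta(t_*) > \xi(t_*)$ for some $t_* \in (t_1,t_2)$. Let $(\alpha,\beta)$ be the maximal open sub-interval of $(t_1,t_2)$ containing $t_*$ on which $\eta > \xi$. Then $\alpha$ and $\beta$ are contact points and $\phi$ is affine on $(\alpha,\beta)$, so $h'' = -\mathfrak{d} \geq 0$ and $h$ is convex on $[\alpha,\beta]$. Since $g := \eta - \xi$ satisfies $g(\alpha) = g(\beta) = 0$ and $g'(\alpha) = g'(\beta) = 0$, iterated application of Rolle's theorem produces $\tau_1 < \tau_2$ in $(\alpha,\beta)$ with $g''(\tau_i) = 0$, hence $h(\tau_i) = 0$. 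By convexity, $h$ lies below its secant on $[\tau_1,\tau_2]$ and above its extrapolation outside, so $h \geq 0$ on $[\alpha,\tau_1] \cup [\tau_2,\beta]$; combined with $h(\alpha), h(\beta) \leq 0$ this gives $h(\alpha) = h(\beta) = 0$, and convexity on $[\alpha,\tau_1]$ with these two endpoint zeros then squeezes $h \equiv 0$ on $[\alpha,\tau_1]$. Hence $\eta'' = \xi''$ on $[\alpha,\tau_1]$, and the contact conditions $\eta(\alpha) = \xi(\alpha)$, $\eta'(\alpha) = \xi'(\alpha)$ yield $\eta \equiv \xi$ on $[\alpha,\tau_1]$, contradicting the maximality of $(\alpha,\beta)$.

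The main subtlety lies in verifying that the contact-point identity $\eta' = \xi'$ really applies at all the relevant points, possibly including the boundary points $0$ or $1$; these cases are handled by the natural boundary conditions in \prettyref{thm:(a-priori-estimates)}. The remaining regularity, namely $g \in C^2([0,1])$ needed for the iterated Rolle argument, is also supplied by that theorem.
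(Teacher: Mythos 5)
Your proof is correct overall, but the two parts deserve separate comments. For part (1) you are essentially reproducing the paper's argument: both proofs apply the mean value theorem to $\eta'-\xi'$ between contact points to produce $\tau_{1}<t_{2}<\tau_{2}$ with $h(\tau_{i})=0$, where $h=1/\sqrt{\eta''}-1/\sqrt{\xi''}$, and then use strict concavity of $h$ to force $h(t_{2})>0$, contradicting $\eta''(t_{2})\geq\xi''(t_{2})$. One caveat: your reduction ``without loss of generality the three contact points are consecutive'' is not justified as stated --- the contact set in $[a,b]$ is merely closed, and a priori it could be perfect and totally disconnected, in which case no two complementary gaps share an endpoint and no triple of ``consecutive'' contact points exists. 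Fortunately the reduction is also unnecessary: you invoke affineness of $\phi$ only to get strict concavity of $h$ on the gaps, but $h''=\phi''-\mathfrak{d}\leq-\mathfrak{d}<0$ holds distributionally on all of $(a,b)$ from the concavity of $\phi$ alone, which is exactly how the paper phrases it. Dropping the WLOG and the affineness claims repairs this without changing anything else.

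Part (2) is a genuinely different route. The paper works globally on $[t_{1},t_{2}]$: it establishes the integral inequality $\int_{t_{1}}^{t_{2}}\bigl(1/\sqrt{\xi''}-1/\sqrt{\eta''}\bigr)(\xi-\eta)''\geq0$ by combining the pointwise sign of $\mathfrak{d}\cdot(\eta-\xi)$ (integrated by parts using contact at $t_{1},t_{2}$) with the first-order optimality of $\eta$ tested against the competitor $\tilde{\eta}$ equal to $\xi$ on $[t_{1},t_{2}]$, and then an algebraic identity turns this into $\int(\xi''-\eta'')^{2}(\cdots)\leq0$, forcing $\eta''=\xi''$ a.e. Your proof instead localizes to a component $(\alpha,\beta)$ of $\{\eta>\xi\}$, uses the support condition on $\mu=-\phi''$ to make $\phi$ affine there, and runs an iterated Rolle plus convexity-sandwich argument to squeeze $h\equiv0$ on a nondegenerate subinterval adjacent to $\alpha$, contradicting $\eta>\xi$ there. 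This is valid and arguably more elementary: it replaces the dual first-variation computation with structural facts about $\phi$ that the regularity theorem already supplies. What the paper's version buys is a single quantitative identity on all of $[t_{1},t_{2}]$ at once; what yours buys is a purely local, pointwise argument that never needs to construct a competitor for $D$.
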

\begin{proof}
We begin with $1$. Let $\eta$ be optimal for the dual problem, and
suppose that $\mathfrak{d}(t)>0$ on $\left[a,b\right]$. By contradiction,
assume there are three contact points at $a\leq t_{1}<t_{2}<t_{3}\leq b$.
Note that by \prettyref{thm:(a-priori-estimates)}, $\eta'\left(t_{i}\right)=\xi'\left(t_{i}\right)$
for $i=1,2,3$, and $\eta''\left(t_{2}\right)\geq\xi''\left(t_{2}\right)$.
By the mean value theorem, there exist points $s_{1},s_{2}$ with
$t_{1}<s_{1}<t_{2}<s_{2}<t_{3}$ such that $\eta''\left(s_{i}\right)=\xi''\left(s_{i}\right)$
for $i=1,2$. By \prettyref{thm:(a-priori-estimates)}, $t\to\frac{1}{\sqrt{\eta''\left(t\right)}}$
is concave, and hence the function $t\to\frac{1}{\sqrt{\eta''\left(t\right)}}-\frac{1}{\sqrt{\xi''\left(t\right)}}$
is strictly concave on $(a,b)$ by the assumption on $\mathfrak{d}$.
As this function vanishes at the points $s_{1},s_{2}$, it must be
strictly positive between. In particular, you have that $\xi''\left(t_{2}\right)>\eta''\left(t_{2}\right)$,
which is a contradiction.

Now we turn to $2$. Suppose now that $\mathfrak{d}\leq0$ on $[a,b]\subset(0,1].$
We claim that
\[
\int_{t_{1}}^{t_{2}}\left(\frac{1}{\sqrt{\xi''}}-\frac{1}{\sqrt{\eta''}}\right)\left(\xi-\eta\right)''\geq0.
\]
With this claim in hand, observe that
\[
\frac{1}{\sqrt{\xi''}}-\frac{1}{\sqrt{\eta''}}=\frac{1}{\sqrt{\xi''\cdot\eta''}(\sqrt{\eta''}+\sqrt{\xi''})}\left(\eta''-\xi''\right)
\]
so that the claim implies that 
\[
\int_{t_{1}}^{t_{2}}\frac{1}{\sqrt{\xi''\cdot\eta''}(\sqrt{\eta''}+\sqrt{\xi''})}\left(\xi''-\eta''\right)^{2}\leq0.
\]
The integrand, however, is nonnegative by the same argument. This
implies that in fact
\[
\frac{1}{\sqrt{\xi''\cdot\eta''}(\sqrt{\eta''}+\sqrt{\xi''})}\left(\xi''-\eta''\right)^{2}=0
\]
for $\cL$-a.e. point in $\left[t_{1},t_{2}\right]$, and hence $\eta''=\xi''$
for $\cL$-a.e. point in $\left[t_{1},t_{2}\right]$. Since $\eta=\xi$
at $t_{1},t_{2}$ we conclude the result.

Now we prove the claim. First, note that by the assumption on $\mathfrak{d}$,
\[
\left(\frac{1}{\sqrt{\xi''}}\right)''\left(\eta-\xi\right)\leq0\quad t\in\left[t_{1},t_{2}\right]
\]
since $\eta\in\cK_{h,\xi}$, so that
\[
\int_{t_{1}}^{t_{2}}\left(\frac{1}{\sqrt{\xi''}}\right)''\left(\eta-\xi\right)\leq0.
\]
Integrate by parts and use the boundary conditions given by contact
at $t_{1},t_{2}$ to find that
\[
\int_{t_{1}}^{t_{2}}\frac{1}{\sqrt{\xi''}}\left(\eta-\xi\right)''\leq0.
\]
On the other hand, define 
\[
\tilde{\eta}\left(t\right)=\begin{cases}
\eta\left(t\right) & t\notin\left[t_{1},t_{2}\right]\\
\xi\left(t\right) & t\in\left[t_{1},t_{2}\right]
\end{cases}
\]
and note that $\tilde{\eta}\in\cK_{h,\xi}$. Furthermore, integration
by parts and \prettyref{thm:(a-priori-estimates)}, yield $\tilde{\eta}''\in L^{1}$
with the obvious expression. 

A first variation calculation and optimality of $\eta$ then yields
the first order optimality condition
\[
\int_{t_{1}}^{t_{2}}\frac{1}{\sqrt{\eta''}}\left(\xi-\eta\right)''\leq0.
\]
Subtracting this from the above gives the claim that
\[
\int_{t_{1}}^{t_{2}}\left(\frac{1}{\sqrt{\xi''}}-\frac{1}{\sqrt{\eta''}}\right)\left(\xi-\eta\right)''\geq0.
\]

\end{proof}

\section{1RSB\label{sec:1RSB}}

In this section, we will study the special case that the minimizer
is 1RSB. We begin first with a study of this in the abstract. We remind
the reader of the terminology from \prettyref{sub:Characterization-of-1RSB},
and in particular the modified definition of $RSB_{1}$ given by \prettyref{eq:RSB1}.

\subsection{1RSB}

In this subsection, we will prove \prettyref{cor:consistency1RSB}.
Recall the notion of formal conjugate from \prettyref{sub:Characterization-of-1RSB}.
Observe that the $\eta$ given by \eqref{eq:formal-conj-1} in that
section is continuous, convex, and satisfies the correct boundary
data for $\cK_{\xi}$. However, $\eta$ does not necessarily satisfy
the obstacle condition. 

Regarding the natural boundary conditions in \prettyref{thm:(a-priori-estimates)},
we have the following result whose proof is a straightforward calculation.
\begin{fact}
\label{fact:m,c}Let $\phi\in RSB_{1}$ and let $R$ be given by \eqref{eq:R}.
Then we have that
\begin{align*}
R\left(t\right) & =\frac{1}{m}\left(\frac{1}{m}\log\left(\frac{c+m\left(1-t\right)}{c}\right)-\frac{1-t}{c+m}\right)\\
R'\left(t\right) & =\frac{1}{m}\left(\frac{1}{c+m}-\frac{1}{c+m\left(1-t\right)}\right).
\end{align*}
Furthermore, the formal conjugate $\eta$ to $\phi$ satisfies
\[
\begin{cases}
\eta(0)=\xi(0)\\
\eta'(1)=\xi'(1)
\end{cases}
\]
if and only if $m$ and $c$ solve
\begin{equation}
\begin{cases}
\xi\left(1\right)=\frac{1}{m}\left(\frac{1}{m}\log\left(\frac{c+m}{c}\right)-\frac{1}{c+m}\right)\\
\frac{1}{\xi'\left(1\right)}=c\left(c+m\right)
\end{cases}.\label{eq:FP1RSB}
\end{equation}

\end{fact}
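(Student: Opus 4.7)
The plan is to verify this fact by direct computation, since $\phi(t) = c + m(1-t)$ is affine and both $R$ and the formal conjugate $\eta = \xi(1) - R$ are given by explicit iterated integrals. There is no conceptual obstruction; the only care required is in tracking signs and choosing correct antiderivatives.

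First, I would compute the inner integral $F(s) := \int_0^s \phi^{-2}(\tau)\,d\tau$ by substituting $u = c + m(1-\tau)$, which yields $F(s) = \frac{1}{m}\bigl(\frac{1}{c+m(1-s)} - \frac{1}{c+m}\bigr)$. From the defining formula $R(t) = \int_t^1 F(s)\,ds$ and the fundamental theorem of calculus, $R'(t) = -F(t)$, and after flipping signs this matches the stated expression for $R'$. Next I would integrate $F$ from $t$ to $1$ to obtain $R$: the constant term contributes $-\frac{1-t}{m(c+m)}$, while $\int_t^1 \frac{ds}{m(c+m(1-s))}$ is handled by the same substitution $u = c + m(1-s)$ and produces $\frac{1}{m^2}\log\bigl(\frac{c+m(1-t)}{c}\bigr)$. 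Collecting these terms gives the claimed formula for $R(t)$.

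For the equivalence with the system \eqref{eq:FP1RSB}, I would compute the two natural boundary quantities directly. Since $\eta = \xi(1) - R$, we have $\eta'(1) = -R'(1) = \frac{1}{m}\bigl(\frac{1}{c} - \frac{1}{c+m}\bigr) = \frac{1}{c(c+m)}$, so $\eta'(1) = \xi'(1)$ is equivalent to $\frac{1}{\xi'(1)} = c(c+m)$, the second equation in \eqref{eq:FP1RSB}. For the endpoint at $0$, note that $\xi(0) = 0$ by the standing assumption $\xi(t) = \sum_{p\geq 2}\beta_p^2 t^p$, so $\eta(0) = \xi(0)$ becomes $R(0) = \xi(1)$, and substituting $t = 0$ into the formula for $R$ produces the first equation in \eqref{eq:FP1RSB}. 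The two equations being logically independent of each other, both must hold simultaneously, giving the claimed equivalence.

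The only real subtlety to flag is keeping the sign conventions straight between $R$ and $\eta = \xi(1) - R$, and the implicit use of $\xi(0) = 0$; otherwise the argument is an exercise in elementary integration and justifies the paper's labeling of this as a straightforward calculation.
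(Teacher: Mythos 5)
Your computation is correct and matches what the paper intends — the paper itself labels this a "straightforward calculation" and omits the details, and your direct evaluation of the iterated integrals and the endpoint conditions is precisely the intended argument. One minor note: the phrase "logically independent" is a slight misnomer; the point is simply that each boundary condition is separately equivalent to the corresponding equation of the system, so their conjunction is equivalent to the system.
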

The system \eqref{eq:FP1RSB} can be simplified by eliminating $m$
for $c$. That one can solve this system is proved in \prettyref{fact:yxi}.
\begin{lem}
\label{lem:master-equation} Let $m,c>0$ solve \eqref{eq:FP1RSB}.
Then $c^{2}\xi'(1)<1$, and $c$ solves the master equation 
\begin{equation}
\frac{\xi(1)}{\xi'(1)}=\frac{c^{2}\xi'(1)}{1-c^{2}\xi'(1)}\left(\frac{1}{1-c^{2}\xi'(1)}\log\left(\frac{1}{c^{2}\xi'(1)}\right)-1\right).\label{eq:master-equation}
\end{equation}
\end{lem}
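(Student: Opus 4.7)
The plan is to prove this by direct algebraic elimination of the variable $m$ from the system \eqref{eq:FP1RSB}. The second equation in that system is linear in $m$ once $c$ is fixed, so it is natural to solve for $m$ first and substitute into the (transcendental) first equation.

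First I would use the second equation $c(c+m) = 1/\xi'(1)$ to write
\[
c + m = \frac{1}{c\,\xi'(1)}, \qquad m = \frac{1 - c^2\xi'(1)}{c\,\xi'(1)}.
\]
Since the lemma's hypothesis is that $m, c > 0$, the formula for $m$ immediately forces $1 - c^2\xi'(1) > 0$, which gives the claimed inequality $c^2 \xi'(1) < 1$. This will also make the logarithm in the master equation well defined, as $1/(c^2\xi'(1)) > 1$.

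Next I would substitute these identities into the first equation of \eqref{eq:FP1RSB}. Observe that
\[
\frac{c+m}{c} = \frac{1}{c^2\xi'(1)}, \qquad \frac{1}{c+m} = c\,\xi'(1), \qquad \frac{1}{m} = \frac{c\,\xi'(1)}{1 - c^2\xi'(1)}.
\]
Plugging these into $\xi(1) = \frac{1}{m}\bigl(\frac{1}{m}\log(\tfrac{c+m}{c}) - \frac{1}{c+m}\bigr)$ gives
\[
\xi(1) = \frac{c\,\xi'(1)}{1 - c^2\xi'(1)}\left(\frac{c\,\xi'(1)}{1 - c^2\xi'(1)}\log\!\left(\frac{1}{c^2\xi'(1)}\right) - c\,\xi'(1)\right),
\]
which factors as
\[
\xi(1) = \frac{c^2\,\xi'(1)^2}{1 - c^2\xi'(1)}\left(\frac{1}{1 - c^2\xi'(1)}\log\!\left(\frac{1}{c^2\xi'(1)}\right) - 1\right).
\]
Dividing through by $\xi'(1)$ (which is positive since $\xi$ is a power series with positive coefficients) yields \eqref{eq:master-equation}.

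There is essentially no analytic obstacle here: the entire argument is a substitution exercise combined with the elementary sign analysis $m>0 \Leftrightarrow c^2\xi'(1) < 1$. The only thing to be slightly careful about is that the manipulations are reversible — so the master equation really is equivalent to \eqref{eq:FP1RSB} (together with $c^2\xi'(1) < 1$) — but this is clear from the fact that $m$ is recovered uniquely from $c$ via $m = (1-c^2\xi'(1))/(c\xi'(1))$. Existence and uniqueness of a solution $c$ to \eqref{eq:master-equation} is not asserted here and is deferred to the cited \prettyref{fact:yxi}.
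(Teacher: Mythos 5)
Your proposal is correct and follows essentially the same route as the paper: solve the second equation of \eqref{eq:FP1RSB} for $m$ (the paper writes this as $m=c(\tfrac{1}{c^{2}\xi'(1)}-1)$, equivalent to your expression), deduce $c^{2}\xi'(1)<1$ from positivity, and substitute into the first equation. The algebra checks out, so no changes are needed.
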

\begin{proof}
Since $m,c$ are positive, we have that
\[
\frac{1}{\xi'(1)}=c^{2}(1+\frac{m}{c})>c^{2}.
\]
Showing that $c$ solves the master equation is a manipulation. Rewrite
the first equation as 
\[
c\left(\frac{1}{c^{2}\xi'(1)}-1\right)=m,
\]
then plug this into \eqref{eq:FP1RSB} to find that 
\begin{align*}
\xi(1) & =\left(\frac{1}{c\left(\frac{1}{c^{2}\xi'(1)}-1\right)}\right)\left(\frac{1}{c\left(\frac{1}{c^{2}\xi'(1)}-1\right)}\log\left(\frac{1}{c^{2}\xi'(1)}\right)-c\xi'(1)\right)\\
 & =\left(\frac{c^{2}\xi'(1)}{1-c^{2}\xi'(1)}\right)\xi'(1)\left[\left(\frac{1}{1-c^{2}\xi'(1)}\right)\log\left(\frac{1}{c^{2}\xi'(1)}\right)-1\right].
\end{align*}

\end{proof}
The next result is regarding a sufficient condition for optimality
and will be used in the subsequent. 
\begin{lem}
\label{lem:suffoptimality1RSB}Assume that $m,c>0$ solve \eqref{eq:FP1RSB}.
Let $\phi=m(1-t)+c$ and let $\eta$ be its formal conjugate. If $\eta\geq\xi$
on $[0,1]$, then $\phi$ and $\eta$ are optimal for $P$ and $D$
.\end{lem}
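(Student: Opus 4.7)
The plan is to invoke Lemma~\ref{lem:dual_optimal}, whose second characterization asserts that $(\phi,\eta)$ is a primal-dual optimal pair whenever (i) $\phi\in\cC$, (ii) $\eta\in\cK_\xi$, (iii) $\phi^{2}\eta''(dx)=dx$, and (iv) $\int\phi\,\xi''=\int\phi\,\eta''$ (these are the conditions at $h=0$). I will verify these four conditions one by one; the hypothesis $\eta\geq\xi$ enters only when checking (ii).

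First, $\phi(t)=c+m(1-t)$ with $m,c>0$ is positive, non-increasing, and linear (hence concave and continuous), so $\phi\in\cC$. By the definition of formal conjugate, $\eta(t)=\xi(1)-R(t)$ with $R$ as in \eqref{eq:R}, so a direct differentiation yields $R'(t)=-\int_{0}^{t}\phi^{-2}$ and $R''(t)=\phi^{-2}(t)$. Hence $\eta''=\phi^{-2}\geq 0$ (so $\eta$ is convex) and $\phi^{2}\eta''(dx)=dx$, establishing (iii). From $R(1)=0$ and $R'(0)=0$ we read off $\eta(1)=\xi(1)$ and $\eta'(0)=0=\xi'(0)$, which gives the right boundary data at $0$ (recall $h=0$ and $\xi'(0)=0$ since $\xi$ has no term of degree $<2$). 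By Fact~\ref{fact:m,c}, the system \eqref{eq:FP1RSB} is precisely equivalent to the two remaining natural boundary conditions $\eta(0)=\xi(0)$ and $\eta'(1)=\xi'(1)$. Combined with the standing hypothesis $\eta\geq\xi$, this places $\eta$ in $\cK_\xi$, verifying (ii).

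For (iv), I integrate by parts twice using the boundary data just established and the fact that $\phi'=-m$ is constant on $(0,1)$. Since $\phi(0)=c+m$, $\phi(1)=c$, $\xi(0)=0=\xi'(0)$, and $\eta(0)=0=\eta'(0)$, $\eta(1)=\xi(1)$, $\eta'(1)=\xi'(1)$, one computes
\[
\int_{0}^{1}\phi\,\xi''\,dt \;=\; c\,\xi'(1)+m\,\xi(1) \;=\; \int_{0}^{1}\phi\,\eta''\,dt,
\]
which is (iv). An equivalent (and arguably more transparent) way to finish is to note directly that
\[
P(\phi)=\int\xi''\phi\,dt+\int\frac{1}{\phi}\,dt \quad\text{and}\quad D(\eta)=2\int\sqrt{\eta''_{ac}}\,dt=2\int\frac{1}{\phi}\,dt,
\]
so the identity $\int\phi\,\xi''=\int\phi\,\eta''=\int\phi^{-1}$ obtained above gives $P(\phi)=D(\eta)$, which by the weak duality inequality \eqref{eq:PaboveD} (shown inside the proof of Lemma~\ref{lem:dual_optimal}) forces both to be optimal.

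There is no genuine obstacle: all of the work has been done upstream. The construction of the formal conjugate makes $\phi^{2}\eta''=1$ automatic; the system \eqref{eq:FP1RSB} is engineered so that the two natural boundary conditions hold; the assumption $\eta\geq\xi$ upgrades $\eta$ from an admissible interior stationary point to a member of $\cK_\xi$; and the matching boundary data then makes the integration-by-parts identity collapse to equality. This is precisely the point at which the obstacle condition is indispensable — without $\eta\geq\xi$, the formal conjugate is not dual-admissible and the construction yields no optimizer.
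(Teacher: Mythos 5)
Your proof is correct and follows essentially the same route as the paper: verify the hypotheses of Lemma~\ref{lem:dual_optimal} for the pair $(\phi,\eta)$, with $\phi^{2}\eta''=1$ automatic from the construction of the formal conjugate, the identity $\int\xi''\phi=\int\eta''\phi$ coming from \eqref{eq:FP1RSB} (via Fact~\ref{fact:m,c}) and $\phi''=0$, and the hypothesis $\eta\geq\xi$ used only to place $\eta$ in $\cK_{\xi}$. (One trivial sign slip: from $R'(t)=-\int_{0}^{t}\phi^{-2}$ one gets $R''(t)=-\phi^{-2}(t)$, which still yields $\eta''=-R''=\phi^{-2}$ as you conclude.)
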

\begin{proof}
Since $\eta$ satisfies \eqref{eq:formal-conj-1}, we have that $\eta$
is continuous, convex, and satisfies the boundary conditions in $\cK_{\xi}$.
We also have that $\phi^{2}\eta''=1$. By \prettyref{fact:m,c} and
since $\phi''=0$, we have that $\int\xi''\phi=\int\eta''\phi.$ Thus,
the result follows by \prettyref{lem:dual_optimal}.
\end{proof}

\subsubsection{Proof of \prettyref{cor:consistency1RSB}.}
\begin{proof}
Assume that the optimal $\phi$ is in $RSB_{1}$, and write $\phi(t)=m_{*}(1-t)+c_{*}$.
Let $\eta\in\cK_{\xi}$ be the corresponding optimizer. By \prettyref{thm:duality},
\prettyref{thm:(a-priori-estimates)}, and \ref{lem:meas-BVP-unique},
$\eta$ is the formal conjugate to $\phi$ given by \prettyref{eq:formal-conj-1}.
Since $\phi'(0)\neq0$, the natural boundary conditions from \prettyref{thm:(a-priori-estimates)}(d)
imply that $\eta(0)=\xi(0)$ and $\eta'(1)=\xi'(1)$. Now claim (1)
follows from \prettyref{fact:m,c}. Claims (2) and (3) follow from
\prettyref{thm:(a-priori-estimates)}$(c)$ after observing that $0$
and $1$ are contact points.

The following lemma justifies the appearance of the ``pure-like or
critical'' condition appearing in the statement of \prettyref{cor:consistency1RSB}.\end{proof}
\begin{lem}
\label{lem:purelikeorcrit}Assume that $\xi\neq\xi_{SK}$. Let $m,c>0$
solve \eqref{eq:FP1RSB} and let $\eta$ be the formal conjugate to
$\phi(t)=m(1-t)+c$. Then $\eta''\left(1\right)\geq\xi''\left(1\right)$
if and only if $\xi$ is pure-like or critical.\end{lem}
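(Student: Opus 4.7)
My plan is to directly compute $\eta''(1)$, translate the obstacle condition at the right endpoint into a condition on $c$, and then use the master equation from \prettyref{lem:master-equation} to match it against the definition of $ABA$.

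First, differentiating $\eta(t) = \xi(1) - R(t)$ twice using \eqref{eq:R} gives $\eta''(t) = 1/\phi^2(t)$, so that $\eta''(1) = 1/c^2$. Setting $\alpha = c^2 \xi'(1) \in (0,1)$ (the inclusion guaranteed by \prettyref{lem:master-equation}), the inequality $\eta''(1) \geq \xi''(1)$ is equivalent to $\alpha \leq \xi'(1)/\xi''(1) = \nu'/\nu''$. The master equation then reads $f(\alpha) = \nu/\nu'$, where
\[
f(\alpha) := \frac{\alpha}{1-\alpha}\left(\frac{1}{1-\alpha}\log\frac{1}{\alpha} - 1\right), \qquad \alpha \in (0,1).
\]

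The next step is to show $f$ is strictly increasing, so that the inequality $\alpha \leq \nu'/\nu''$ may be rephrased as $\nu/\nu' \leq f(\nu'/\nu'')$. A direct computation yields
\[
f'(\alpha) = \frac{h(\alpha)}{(1-\alpha)^3}, \qquad h(\alpha) := -(1+\alpha)\log\alpha - 2(1-\alpha),
\]
and one checks that $h(1) = h'(1) = 0$ with $h''(\alpha) = (1-\alpha)/\alpha^2 > 0$ on $(0,1)$; thus $h'<0$ and $h>0$ on $(0,1)$, and $f'>0$. The assumption $\xi \neq \xi_{SK}$ gives $\nu'' > \nu'$, keeping $\nu'/\nu''$ strictly inside the domain.

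It then remains to identify $\nu/\nu' \leq f(\nu'/\nu'')$ with $ABA(\xi) \geq 0$. Substituting $\alpha = \nu'/\nu''$ in the expression for $f$ and invoking the normalization $\nu = \xi(1) = 1$ built into the definition of $ABA$, the inequality $1/\nu' \leq f(\nu'/\nu'')$ simplifies, after clearing denominators and multiplying through by $(\nu''-\nu')/\nu''$, to
\[
\log\frac{\nu''}{\nu'} \geq \frac{(\nu''-\nu')(\nu''-\nu'+\nu'^2)}{\nu''\nu'^2},
\]
which is precisely the condition $ABA(\xi) \geq 0$. The main obstacle is the monotonicity of $f$: both endpoints are delicate, since $f'$ vanishes to high order at $\alpha = 1$ and $\log\alpha$ blows up at $\alpha = 0$, which forces us to pass through the auxiliary function $h$ and use that its second derivative is visibly positive. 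Once this is in hand, the remaining work is purely algebraic bookkeeping.
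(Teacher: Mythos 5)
Your proposal is correct and follows essentially the same route as the paper: compute $\eta''(1)=1/c^2$, convert the inequality via the substitution $\alpha = c^2\xi'(1)$, invoke monotonicity of the master-equation function, and match algebraically to $ABA\geq 0$. The one improvement is that you actually verify the strict monotonicity of $f$ via the auxiliary function $h(\alpha)=-(1+\alpha)\log\alpha-2(1-\alpha)$ (with $h(1)=h'(1)=0$, $h''>0$), whereas the paper asserts this as a ``calculus exercise'' in the proof of \prettyref{fact:yxi} without supplying the computation.
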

\begin{proof}
First we note that since $\xi\neq\xi_{SK}$, $\xi''(1)>\xi'(1)$.
Now by \eqref{eq:formal-conj-1}, $\eta(t)=\xi(1)-R(t)$ so that 
\[
\eta''(t)=-R''(t)=\frac{1}{\left(c+m\left(1-t\right)\right)^{2}}.
\]
Thus, $\eta''(1)\geq\xi''(1)$ if and only if $c^{2}\leq\frac{1}{\xi''(1)}$.
Let $y=c^{2}\xi'(1)$, then by the master equation\eqref{eq:master-equation},
\[
\xi(1)=\frac{y}{1-y}\xi'(1)\left(\frac{1}{1-y}\log\left(\frac{1}{y}\right)-1\right).
\]
Since the function 
\[
f(s)=\frac{s}{1-s}\xi'(1)\left(\frac{1}{1-s}\log\left(\frac{1}{s}\right)-1\right)
\]
is increasing on $[0,1)$, $c^{2}\leq\frac{1}{\xi''(1)}$ if and only
if $f(y)\leq f(\frac{\xi'(1)}{\xi''(1)})$. Rewriting the last condition
gives 
\begin{align*}
\xi(1) & \leq\frac{\frac{\xi'(1)}{\xi''(1)}}{1-\frac{\xi'(1)}{\xi''(1)}}\xi'(1)\left(\frac{1}{1-\frac{\xi'(1)}{\xi''(1)}}\log\left(\frac{1}{\frac{\xi'(1)}{\xi''(1)}}\right)-1\right)\\
 & =\frac{\xi'(1)^{2}\xi''(1)}{\left(\xi''(1)-\xi'(1)\right)^{2}}\left[\log\left(\frac{\xi''}{\xi'}\right)-\frac{\xi''-\xi'}{\xi''}\right](1).
\end{align*}
Since $\xi''(1)-\xi'(1)\neq0$, this is equivalent to
\begin{align*}
0 & \leq\left[\log\left(\frac{\xi''}{\xi'}\right)-\frac{\xi''-\xi'}{\xi''}-\xi\frac{\left(\xi''-\xi'\right)^{2}}{\xi'^{2}\xi''}\right](1)\\
 & =\left[\log\left(\frac{\xi''}{\xi'}\right)-\frac{\left(\xi''-\xi'\right)\left(\xi\left(\xi''-\xi'\right)+\xi'^{2}\right)}{\xi'^{2}\xi''}\right](1)=ABA(\nu',\nu''),
\end{align*}
where the last equality comes from the assumption that $\xi(1)=1$.
\end{proof}

\subsection{1RSB in 2+p models}

In this subsection we will prove \prettyref{thm:CL-ABA-2-p}. We begin
with the following observations. If we let $y=\frac{1}{c^{2}\xi'\left(1\right)}$,
we see that the master equation \prettyref{eq:master-equation} is
equivalent to 
\begin{equation}
\frac{\xi\left(1\right)}{\xi'\left(1\right)}=\frac{1}{y-1}\left(\frac{y}{y-1}\log y-1\right).\label{eq:root}
\end{equation}
We have that $0<\frac{\xi\left(1\right)}{\xi'\left(1\right)}\leq\frac{1}{2}$
with equality if and only if $\xi=\xi_{SK}$. Observe the following
fact which follows by a standard calculus exercise.
\begin{fact}
\label{fact:yxi}There exists a unique solution $y_{\xi}\in[1,\infty)$
to the equation \eqref{eq:root}. The solution satisfies $y_{\xi}=1$
if and only if $\xi=\xi_{SK}$ . Finally, if $y_{\xi}<1$, then \prettyref{eq:FP1RSB}
and \prettyref{eq:master-equation} have unique positive solutions.\end{fact}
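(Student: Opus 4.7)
The plan is to reduce the fact to a one-variable calculus exercise about the function
\[
g(y) := \frac{1}{y-1}\left(\frac{y}{y-1}\log y - 1\right) = \frac{y\log y - (y-1)}{(y-1)^{2}},
\]
so that equation \eqref{eq:root} becomes $g(y) = \xi(1)/\xi'(1)$. The overall strategy is: analyze the range and monotonicity of $g$ on $[1,\infty)$, combine this with the a priori bounds on $\xi(1)/\xi'(1)$ to get existence and uniqueness of $y_\xi$, and then translate the solution back to the original variables $(m,c)$ via the substitution $y = 1/(c^2\xi'(1))$.

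First I would show that $g$ extends continuously to $y=1$ with value $1/2$ and tends to $0$ as $y \to \infty$. The limit at $y=1$ is a routine Taylor expansion: writing $y = 1+\varepsilon$ and using $\log(1+\varepsilon) = \varepsilon - \varepsilon^2/2 + O(\varepsilon^3)$ gives $y\log y - (y-1) = \varepsilon^2/2 + O(\varepsilon^3)$, so $g(1+\varepsilon) = 1/2 + O(\varepsilon)$. The limit at infinity is immediate since $g(y) \sim (\log y)/y$.

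The heart of the argument is strict monotonicity: $g'(y) < 0$ on $(1,\infty)$. A direct computation yields
\[
g'(y) = \frac{-(y+1)\log y + 2(y-1)}{(y-1)^{3}},
\]
so the claim reduces to showing $h(y) := (y+1)\log y - 2(y-1) > 0$ for $y>1$. One checks $h(1)=0$, $h'(y) = \log y + 1/y - 1$ with $h'(1) = 0$, and $h''(y) = 1/y - 1/y^{2} = (y-1)/y^{2} > 0$ for $y>1$. Integrating twice from $1$ gives $h(y) > 0$ on $(1,\infty)$, hence $g$ is strictly decreasing there. Combined with the boundary values, $g$ is a continuous, strictly decreasing bijection from $[1,\infty)$ onto $(0,1/2]$.

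Since Lemma \ref{lem:master-equation} has already established $0 < \xi(1)/\xi'(1) \leq 1/2$ with equality exactly when $\xi = \xi_{SK}$, the bijectivity of $g$ yields a unique $y_\xi \in [1,\infty)$ solving \eqref{eq:root}, with $y_\xi = 1 \iff \xi = \xi_{SK}$. For the final claim (assuming the typo $y_\xi < 1$ is really $y_\xi > 1$, since $y_\xi \in [1,\infty)$), if $\xi \neq \xi_{SK}$ then $y_\xi > 1$; I would then set
\[
c := \bigl(y_\xi\,\xi'(1)\bigr)^{-1/2}, \qquad m := c(y_\xi - 1),
\]
both strictly positive. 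These are equivalent via $y_\xi = 1/(c^2\xi'(1))$ to the reduction used in the proof of Lemma \ref{lem:master-equation}, and plugging back verifies the second equation of \eqref{eq:FP1RSB} (i.e.\ $c(c+m) = 1/\xi'(1)$) as well as the first (which is precisely $g(y_\xi) = \xi(1)/\xi'(1)$). Uniqueness of the pair $(m,c)$ follows because the correspondence $(m,c) \leftrightarrow y$ with $c^2\xi'(1) < 1$ is a bijection and $y_\xi$ is uniquely determined. The main (and only real) obstacle is establishing the monotonicity of $g$; the rest is bookkeeping once $h$ has been handled by the double-integration argument above.
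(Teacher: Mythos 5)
Your proposal takes the same route as the paper: both reduce \eqref{eq:root} to analyzing the auxiliary function $a(y)$ (your $g$) and establishing that it is a continuous, strictly decreasing bijection from $[1,\infty)$ onto $(0,\tfrac12]$. The paper simply asserts the monotonicity and boundary behavior as ``a calculus exercise''; you have carried out that exercise explicitly (the reduction of $g'<0$ to $h(y):=(y+1)\log y-2(y-1)>0$ via $h(1)=h'(1)=0$, $h''>0$ is correct), and you also correctly flag the typo $y_\xi<1$ in the statement (it should read $y_\xi>1$, equivalently $\xi\neq\xi_{SK}$) and supply the translation back to $(m,c)$ that the paper leaves implicit.
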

\begin{proof}
Consider $a:(1,\infty)\to\R$
\[
a\left(y\right)=\frac{1}{y-1}\left(\frac{y}{y-1}\log y-1\right)
\]
It is then a calculus exercise to show that
\begin{itemize}
\item $a$ is continuous and strictly decreasing on $(1,\infty)$
\item $a\left(1+\right)=\frac{1}{2}$ and $a\left(+\infty\right)=0$
\end{itemize}
Therefore, $a$ is invertible, $a^{-1}:\left(0,1\right)\to\R_{+}$,
and $a^{-1}$ is continuous. This proves the first two claims. For
the third claim, note $y_{\xi}=1$ if and only if $\frac{\xi\left(1\right)}{\xi'\left(1\right)}=\frac{1}{2}$
which it true if and only if $\xi=\xi_{SK}$.
\end{proof}
With this in hand, we observe that one can rewrite the non-negative
replicon eigenvalue and pure-like or critical conditions as upper
and lower bounds on $y_{\xi}$ respectively. More precisely, 
\begin{lem}
\label{lem:ybounds}Let $\xi\neq\xi_{SK}$. Then $\xi$ satisfies
the ``non-negative replicon eigenvalue'' and ``pure-like or critical''
conditions if and only if 
\begin{equation}
\frac{\xi''\left(1\right)}{\xi'\left(1\right)}\leq y_{\xi}\leq\frac{\xi'\left(1\right)}{\xi''\left(0\right)}.\label{eq:y-bounds}
\end{equation}
\end{lem}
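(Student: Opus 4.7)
My plan is to reduce both conditions to explicit bounds on $c^{2}\xi'(1)$ and then translate via the definition $y_{\xi}=\tfrac{1}{c^{2}\xi'(1)}$. The two halves of the lemma are essentially independent algebraic identities, so I will handle them in parallel.

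First I would treat the ``pure-like or critical'' side. From the formal conjugate formula \eqref{eq:formal-conj-1}, a direct computation gives $\eta''(t)=(c+m(1-t))^{-2}$, so $\eta''(1)=1/c^{2}$. Hence $\eta''(1)\geq \xi''(1)$ is equivalent to $c^{2}\xi''(1)\leq 1$, which rewrites as $c^{2}\xi'(1)\leq \xi'(1)/\xi''(1)$, i.e.\ $y_{\xi}\geq \xi''(1)/\xi'(1)$. Combining this with \prettyref{lem:purelikeorcrit}, which identifies $\eta''(1)\geq\xi''(1)$ with the pure-like-or-critical condition $ABA(\xi)\geq 0$, yields the lower bound in \eqref{eq:y-bounds}.

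Next I would treat the non-negative replicon eigenvalue side. By \prettyref{def:repliconeigenvalue}, the replicon eigenvalue is $\eta''(0)-\xi''(0)$ where $\eta$ is the formal conjugate to $\phi(t)=m(1-t)+c$. From the same formula, $\eta''(0)=(c+m)^{-2}$. The second equation in \eqref{eq:FP1RSB} gives $c(c+m)=1/\xi'(1)$, hence
\[
(c+m)^{2}=\frac{1}{c^{2}\xi'(1)^{2}}=\frac{y_{\xi}}{\xi'(1)}.
\]
Therefore the condition $\eta''(0)\geq \xi''(0)$ becomes $y_{\xi}/\xi'(1)\leq 1/\xi''(0)$, which is exactly the upper bound $y_{\xi}\leq \xi'(1)/\xi''(0)$ (with the usual convention that the bound is $+\infty$ when $\xi''(0)=0$, in which case the replicon condition holds automatically).

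Neither step poses a real obstacle: the only point worth care is ensuring that $m$ and $c$ are well-defined positive numbers so that the algebraic manipulations above are legitimate. This is precisely the content of \prettyref{fact:yxi}, applied under the hypothesis $\xi\neq\xi_{SK}$ (which guarantees $y_{\xi}>1$, hence $c>0$, hence $m>0$ from $m=c(y_{\xi}-1)$). Putting the two equivalences together gives \eqref{eq:y-bounds}.
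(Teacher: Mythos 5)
Your argument is correct and is essentially the paper's own proof: both reduce the two conditions to $c^{2}\leq 1/\xi''(1)$ (via \prettyref{lem:purelikeorcrit}) and $(c+m)^{-2}\geq\xi''(0)$ (via the second equation of \eqref{eq:FP1RSB}), and then translate through $y_{\xi}=1/(c^{2}\xi'(1))$. The only addition you make is the explicit remark about the convention when $\xi''(0)=0$, which is a harmless clarification.
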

\begin{proof}
Let $m,c>0$ solve \eqref{eq:FP1RSB} and let $\eta$ be the formal
conjugate to $\phi=m(1-t)+c$. Then
\[
\eta''(t)=\frac{1}{(m(1-t)+c)^{2}}.
\]
By definition, the ``non-negative replicon eigenvalue'' condition
is that $\frac{1}{(m+c)^{2}}\geq\xi''(0)$. By \eqref{eq:FP1RSB}
this is equivalent to $c^{2}(\xi'(1))^{2}\geq\xi''(0)$. Similarly,
by \prettyref{lem:purelikeorcrit} we have that the ``pure-like or
critical'' condition is equivalent to $\frac{1}{c^{2}}\geq\xi''(1)$.
Now by \prettyref{lem:master-equation} and \prettyref{fact:yxi},
$y_{\xi}=\frac{1}{c^{2}\xi'(1)}$. The result immediately follows.
\end{proof}
For the remainder of this section we will be specifically addressing
models of the form $\xi_{\mu}\left(t\right)=\mu t^{2}+\left(1-\mu\right)t^{p}$
where $p\geq3$ is fixed and $0\leq\mu<1$. Then we refer to the point
$y_{\xi}$ as $y_{\mu}$. 

Define 
\begin{equation}
h_{\mu}=\frac{1}{\phi_{\mu}^{2}}-\xi_{\mu}'',\label{eq:h}
\end{equation}
where $\phi_{\mu}\left(t\right)=c_{\mu}+m_{\mu}\left(1-t\right),$
and where $c,m$ are determined from $\mu$ by solving \eqref{eq:FP1RSB}.
Observe the following lemma.
\begin{lem}
\label{lem:rootbd}(root bound) Let $\mu\in[0,1]$ be such that $y_{\mu}$
satisfies \eqref{eq:y-bounds}. Then the equation $h_{\mu}\left(t\right)=0$
has at most two solutions in $(0,1)$.\end{lem}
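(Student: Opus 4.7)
The plan is to transform the problem into counting intersections between the curve $\psi(t) = 1/\sqrt{\xi''_{\mu}(t)}$ and the affine function $L(t) = c_{\mu} + m_{\mu}(1-t)$. Since the formal conjugate $\eta_{\mu}$ satisfies $\eta''_{\mu} = 1/L^{2}$, we have $h_{\mu} = \eta''_{\mu} - \xi''_{\mu} = L^{-2} - \psi^{-2}$, and because $L,\psi > 0$ on $[0,1]$ the zeros of $h_{\mu}$ coincide with those of $g(t) := \psi(t) - L(t)$. By \prettyref{lem:ybounds}, the assumption \eqref{eq:y-bounds} is precisely the pair of inequalities $g(0) \geq 0$ and $g(1) \geq 0$.

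Since $L$ is affine, $g''(t) = \psi''(t) = \mathfrak{d}(t)$, the function from \prettyref{thm:wiggles-principle}. Writing $f(t) = \xi''_{\mu}(t) = 2\mu + (1-\mu)p(p-1)t^{p-2}$ and using the identity $(f^{-1/2})'' = \tfrac{1}{4}f^{-5/2}(3(f')^{2} - 2ff'')$, a direct computation yields
\[
\operatorname{sgn}\mathfrak{d}(t) \;=\; \operatorname{sgn}\bigl((1-\mu)p^{2}(p-1)\,t^{p-2} - 4\mu(p-3)\bigr),\qquad t\in(0,1).
\]
In particular, $\mathfrak{d}$ has at most one sign change on $(0,1)$, and any such change is strictly from negative to positive.

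Consequently $g'$ is either monotone (when $\mathfrak{d}$ has fixed sign) or strictly decreasing-then-increasing (when $\mathfrak{d}$ changes sign at some $t_{*}\in(0,1)$), and in either case has at most two zeros in $(0,1)$. The proof concludes by contradiction with Rolle's theorem. Assume $g$ has three zeros $a < b < c$ in $(0,1)$: Rolle produces critical points $\alpha \in (a,b)$ and $\beta \in (b,c)$, so $g'$ has two distinct zeros. This rules out the monotone case for $g'$; hence $\mathfrak{d}$ changes sign at some $t_{*}\in(0,1)$, with $\alpha \leq t_{*} \leq \beta$. On $(0,t_{*})$ the derivative $g'$ is strictly decreasing with $g'(\alpha) = 0$, so $g' > 0$ on $(0,\alpha)$, making $g$ strictly increasing there. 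Since $a \in (0,\alpha)$, this gives $g(0) < g(a) = 0$, contradicting $g(0) \geq 0$.

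The main technical obstacle is verifying the one-sided sign change of $\mathfrak{d}$ for the $2+p$ model; the orientation (negative-to-positive rather than positive-to-negative) is essential, since a convex-then-concave $g$ could in principle permit a down-up-down profile with three zeros even when both boundary values are non-negative. Everything else is routine once this structural fact about the curvature of $\psi$ is in hand.
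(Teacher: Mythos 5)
Your proof is correct, and it takes a genuinely different route from the paper. The paper expands the polynomial $Z_\mu = 1 - \phi_\mu^2\,\xi_\mu''$ and applies Descartes' rule of signs together with the observations $Z_\mu(0),Z_\mu(1)\geq0$ and $Z_\mu(+\infty)=-\infty$, a case-by-case sign count on the coefficients (with special handling of $p=3,4$). You instead pass to the geometric picture $g = (\xi_\mu'')^{-1/2} - \phi_\mu$, observe that $g'' = \mathfrak{d}$ is exactly the curvature function from \prettyref{thm:wiggles-principle}, and compute (correctly) that for $2+p$ models $\operatorname{sgn}\mathfrak{d} = \operatorname{sgn}\bigl((1-\mu)p^2(p-1)t^{p-2}-4\mu(p-3)\bigr)$ admits at most one sign change, and only from negative to positive; the Rolle contradiction then closes the argument using the endpoint constraint $g(0)\geq0$. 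Your version is more conceptual: it exposes the structural reason the $2+p$ case is tractable, namely that $\mathfrak{d}$ changes sign at most once, which ties this lemma explicitly to the general dimension-reduction principle of \prettyref{thm:wiggles-principle} and avoids the somewhat opaque sign-count bookkeeping. The paper's argument is more elementary in its toolkit but offers less insight into which feature of $2+p$ is really being exploited. Two very minor points in your writeup: when $\mu=0$ one has $\psi(0)=+\infty$ so the phrase ``$\psi>0$ on $[0,1]$'' is strictly correct only on $(0,1]$, but this case is anyway covered by the monotone branch (since $\mathfrak{d}>0$ throughout); and ruling out ``the monotone case'' tacitly uses that $\mathfrak{d}$, being real-analytic and not identically zero on $(0,1)$, forces $g'$ to be \emph{strictly} monotone when $\mathfrak{d}$ has one sign, which is worth a word.
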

\begin{proof}
We suppress the subscript $\mu$ for readability when convenient.
Let $Z=\phi^{2}h=1-\phi^{2}\xi''$ and note that the roots of $Z$
and $h$ are the same. Observe that
\[
\phi^{2}\left(t\right)=\left(c+m\left(1-t\right)\right)^{2}=c^{2}\left(1+\left(\frac{1}{c^{2}\xi'\left(1\right)}-1\right)\left(1-t\right)\right)^{2}=C\left(\xi,y\right)\left(\frac{y}{y-1}-t\right)^{2}
\]
where $C\left(\xi,y\right)=\frac{\left(y-1\right)^{2}}{\xi'\left(1\right)y}.$
In particular, $Z_{\xi}\left(t\right)=1-C\left(\xi,y_{\xi}\right)\left(\frac{y_{\xi}}{y_{\xi}-1}-t\right)^{2}\xi''\left(t\right)$.
Specializing to the case of $2+p$, with $\xi=\xi_{\mu}$, we have
\begin{align*}
Z_{\mu}\left(t\right) & =1-C\left(\xi_{\mu},y_{\mu}\right)\left(\frac{y_{\mu}}{y_{\mu}-1}-t\right)^{2}\left(2\mu+p\left(p-1\right)\left(1-\mu\right)t^{p-2}\right)\\
C\left(\xi_{\mu},y\right) & =\frac{1}{2\mu+p\left(1-\mu\right)}\frac{\left(y-1\right)^{2}}{y}.
\end{align*}

Now the plan is to expand $Z$ and apply Descartes' rules of signs.
We begin by observing that by the non-negative replicon eigenvalue
condition (i.e., $h(0)\geq0$), we have that $Z(0)\geq0$, and similarly
by the pure-like or critical condition (i.e., that $h(1)\geq0$),
we have $Z(1)\geq0$. Let $x=\frac{y}{y-1}$, which is positive since
$y>1$, and expand $Z$ to find that
\begin{align*}
Z & =1-C\xi''\left(t\right)\left(x-t\right)^{2}=1-C\xi''\left(t\right)\left(x^{2}-2xt+t^{2}\right)\\
 & =Z\left(0\right)+C2\mu2xt-C2\mu t^{2}-Cp\left(p-1\right)\left(1-\mu\right)t^{p-2}x^{2}+Cp\left(p-1\right)\left(1-\mu\right)t^{p-1}2x-Cp\left(p-1\right)\left(1-\mu\right)t^{p}.
\end{align*}
Since $Z\left(0\right)\geq0$ we count at most three sign changes
(checking the cases $p=3$ and $p=4$ separately), so that by Descartes'
rule of signs there are at most three positive real roots. Since $Z\left(+\infty\right)=-\infty$
and $Z\left(1\right)\geq0$, one of those roots must be in $[1,\infty)$.
This proves the claim.
\end{proof}
Now we have a lemma regarding the sign of $Z$ near $0$. 
\begin{lem}
\label{lem:bdrybehavior}Let $y_{\mu}$ satisfy \eqref{eq:y-bounds}.
Then there is a $\delta>0$ such that $Z>0$ on $(0,\delta)$. \end{lem}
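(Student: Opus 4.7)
The plan is to analyze $Z(0)$ via the upper bound in \eqref{eq:y-bounds}, then proceed by case analysis: if $Z(0)>0$, invoke continuity; if $Z(0)=0$, Taylor expand and show the leading term is positive, with the $p=3$ case requiring a separate argument to rule out the boundary case entirely.

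First, a direct computation using $C(\xi_\mu,y_\mu)=(y_\mu-1)^2/(\xi_\mu'(1)y_\mu)$ and $x=y_\mu/(y_\mu-1)$ yields $Cx^2=y_\mu/\xi_\mu'(1)$, and since $\xi_\mu''(0)=2\mu$ (the $t^p$ term drops out as $p\geq 3$),
\[
Z_\mu(0) \;=\; 1 - Cx^2\,\xi_\mu''(0) \;=\; 1 - \frac{2\mu\, y_\mu}{\xi_\mu'(1)}.
\]
The upper bound $y_\mu\leq \xi_\mu'(1)/\xi_\mu''(0)$ in \eqref{eq:y-bounds} is thus equivalent to $Z_\mu(0)\geq 0$, with equality iff $y_\mu=\xi_\mu'(1)/(2\mu)$. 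If $Z_\mu(0)>0$, continuity of $Z_\mu$ on $[0,1]$ gives $\delta>0$ with $Z_\mu>0$ on $[0,\delta)$, and we are done.

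Suppose now $Z_\mu(0)=0$. Differentiating $Z_\mu=1-\phi_\mu^2\xi_\mu''$, and using $\phi_\mu(0)=m+c$, $\phi_\mu'\equiv -m$, $\xi_\mu''(0)=2\mu$, I obtain
\[
Z_\mu'(0) \;=\; 4m\mu(m+c) \;-\; (m+c)^2\,\xi_\mu'''(0).
\]
For $p\geq 4$ one has $\xi_\mu'''(0)=0$, so $Z_\mu'(0)=4m\mu(m+c)>0$, since $m,c,\mu>0$ (by \prettyref{fact:yxi} and $\xi_\mu\neq\xi_{SK}$). Taylor's theorem then gives $Z_\mu>0$ on $(0,\delta)$ for some $\delta>0$.

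The delicate case is $p=3$, where $\xi_\mu'''(0)=6(1-\mu)\neq 0$; in fact a direct substitution of $c=\sqrt{2\mu}/(3-\mu)$ and $m=3(1-\mu)/(\sqrt{2\mu}(3-\mu))$ into $Z_\mu'(0)$ yields
\[
Z_\mu'(0) \;=\; -\frac{9(1-\mu)^2}{\mu(3-\mu)} \;<\; 0,
\]
so Taylor expansion would fail. The resolution is that the boundary case $Z_\mu(0)=0$ simply never occurs when $p=3$ and $\mu\in(0,1)$. Indeed, imposing $y_\mu=(3-\mu)/(2\mu)$ in the master equation \eqref{eq:master-equation} and substituting $s=(3-\mu)/(2\mu)\in(1,\infty)$ reduces to the identity
\[
f(s) \;:=\; 2s^3 + 3s^2 - 6s + 1 - 6s^2\log s \;=\; 0.
\]
A short calculation shows $f(1)=f'(1)=f''(1)=0$ and $f'''(s)=12-12/s>0$ for $s>1$. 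Integrating successively, $f''>0$, then $f'>0$, then $f>0$ on $(1,\infty)$. Hence $f$ has no zero on $(1,\infty)$, so $y_\mu<(3-\mu)/(2\mu)$ strictly for all $\mu\in(0,1)$ when $p=3$, i.e.\ $Z_\mu(0)>0$, and we are back to the continuity case.

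The main obstacle is precisely this $p=3$ boundary case: the naive Taylor argument produces $Z_\mu'(0)<0$, and one must instead invoke a transcendental comparison (the analysis of $f(s)$ above) to show that the offending scenario is compatible with the replicon bound only at $\mu=1$, i.e.\ at $\xi_{SK}$, which is excluded.
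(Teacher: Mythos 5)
Your proof is correct and follows the same overall strategy as the paper's (compute $Z(0)$, note the upper bound in \eqref{eq:y-bounds} is exactly $Z(0)\geq0$, use continuity when $Z(0)>0$, and Taylor expand at $0$ when $Z(0)=0$). But you have done something more: you have identified and repaired a genuine gap in the paper's own argument. The paper asserts $Z(t)=4Cx\mu t+o(t)$ in the case $Z(0)=0$, which is only valid for $p\geq4$; for $p=3$ the term $-Cp(p-1)(1-\mu)x^{2}t^{p-2}$ is itself linear in $t$, and as you compute, the full linear coefficient at the putative boundary point is $-9(1-\mu)^{2}/(\mu(3-\mu))<0$, so the paper's expansion argument would actually give the wrong sign there. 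Your resolution --- showing via the master equation \eqref{eq:master-equation} that the substitution $s=(3-\mu)/(2\mu)$ leads to $f(s)=2s^{3}+3s^{2}-6s+1-6s^{2}\log s=0$, and then that $f(1)=f'(1)=f''(1)=0$ with $f'''(s)=12-12/s>0$ on $(1,\infty)$ forces $f>0$ there, so the case $Z(0)=0$ is vacuous for $p=3$ --- is correct (I verified the reduction to $f$ and the derivative computations). The only cosmetic slip is attributing $\mu>0$ to \prettyref{fact:yxi}; the correct justification, which the paper also uses, is that $Z(0)=1-2\mu(m+c)^{2}$, so $Z(0)=0$ itself forces $\mu\neq0$. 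This does not affect the argument.
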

\begin{proof}
By \eqref{eq:y-bounds}, we see that $Z\left(0\right),Z\left(1\right)\geq0$.
Now for the first claim: If $Z\left(0\right)>0$ then the result follows
by continuity. If $Z\left(0\right)=0$, then since $C,x\neq0$, and
since $Z\left(0\right)=0$ then you must have that $\mu\neq0$. Then
$Z\left(t\right)=4Cx\mu t+o\left(t\right)$ for $t\to0$, so the result
follows.
\end{proof}

\subsubsection{Proof of \prettyref{thm:CL-ABA-2-p}}

Before we begin the proof, recall that we are considering models of
the form $\xi_{\mu}\left(t\right)=\mu t^{2}+\left(1-\mu\right)t^{p}$
where $p\geq3$ is fixed and $0\leq\mu<1$. Recall also that we refer
to $y_{\xi}$ as defined in \prettyref{fact:yxi} as $y_{\mu}$. We
now begin the proof.
\begin{proof}
The ``only if'' part of the theorem is implied by \prettyref{cor:consistency1RSB}.
Now we show the ``if'' part. 

To begin, let $m,c>0$ solve \eqref{eq:FP1RSB} and let $\eta$ be
the formal conjugate to $\phi(t)=m(1-t)+c$. Assume that the ``non-negative
replicon eigenvalue'' and the ``pure-like or critical'' conditions
hold, i.e., assume that $\eta''(0)\geq\xi''(0)$ and that $\eta''(1)\geq\xi''(1)$.
(See \prettyref{lem:purelikeorcrit}.) Our goal is to show that $\phi$
and $\eta$ are optimal. Recall that by \prettyref{lem:suffoptimality1RSB},
this will hold provided that the obstacle condition, $\eta\geq\xi$
for all $t\in\left[0,1\right]$, holds. We will prove this by studying
the difference of the second derivatives, $h_{\mu}$ from \eqref{eq:h}. 

We begin by observing that $h_{\mu}$ undergoes at most two sign changes
in $(0,1)$. To see this, observe that by \prettyref{lem:ybounds},
$y_{\mu}$ satisfies \eqref{eq:y-bounds} . The observation then follows
by \prettyref{lem:rootbd}. Furthermore, by \prettyref{lem:bdrybehavior}
we know that $h_{\mu}$ is positive on a neighborhood of $0$. The
obstacle condition will now follow by a case analysis. 

In the first case, $h_{\mu}$ does not change sign. Then, $h_{\mu}\geq0$
on $(0,1)$ and hence $\eta-\xi$ is convex there. Since $\eta(0)=\xi(0)$
and $\eta'(0)=\xi'(0)$, it follows that $\eta\geq\xi$ on $[0,1]$. 

In the second case, $h_{\mu}$ undergoes one sign change. Then, there
exists $\delta\in(0,1)$ such that $h_{\mu}>0$ on $(0,\delta)$ and
$h_{\mu}\leq0$ on $[\delta,1)$. Thus $\eta-\xi$ is convex on $(0,\delta)$
and by the same argument as above we conclude that $\eta\geq\xi$
on $[0,\delta]$. Continuing, we have that $\eta(\delta)\geq\xi(\delta)$
and $\eta(1)\geq\xi(1)$, and also that $\eta-\xi$ is concave on
$[\delta,1)$. It immediately follows that $\eta\geq\xi$ on $[\delta,1]$
as required.

In the final case, $h$ undergoes two sign changes. Then, there exist
$\delta_{1},\delta_{2}\in(0,1)$ with $\delta_{1}<\delta_{2}$, and
such that $h_{\mu}>0$ on $(0,\delta_{1})\cup(\delta_{2},1)$ and
$h\leq0$ on $[\delta_{1},\delta_{2}]$. Using the boundary data at
$0$ and $1$ and that $\eta-\xi$ is convex on $(0,\delta_{1})\cup(\delta_{2},1)$,
we conclude that $\eta\geq\xi$ on $[0,\delta_{1}]\cup[\delta_{2},1]$.
Continuing, we have that $\eta(\delta_{i})\geq\xi(\delta_{i})$ for
$i=1,2$ and that $\eta-\xi$ is concave on $[\delta_{1},\delta_{2}]$.
It follows that $\eta\geq\xi$ on $[\delta_{1},\delta_{2}]$ as required.
\end{proof}

\section{Appendix }

In this appendix, we collect some basic results that are used throughout
the paper. In \prettyref{sub:csfunc-reform}, we explain the relation
between the Crisanti-Sommers functional as defined in this paper and
those occurring previously in the literature. In \prettyref{sub:On-the-sets-C-and-A},
we present basic properties of the sets $\cA,\cC,$ and $\cK_{h,\xi}$.
In \prettyref{sub:Square-Roots-of}, we present basic properties of
$S$ and $D$. In \prettyref{sub:Basic-Theorems-in}, we prove basic
results regarding sequential $\Gamma$-convergence.

\subsection{Reformulation of the Crisanti-Sommers functional \label{sub:csfunc-reform}}

The Crisanti-Sommers variational problem is usually posed in a different
form. Let $\mu\in\Pr[0,1]$, and define $q_{*}=\sup\text{supp}\,\mu$.
The spherical Crisanti-Sommers functional is sometimes defined as
\[
\tilde{\cP}\left(\mu;\beta,h,\xi\right)=\begin{cases}
\frac{1}{2}\left(\int_{0}^{1}\beta^{2}\xi'\left(s\right)\mu\left[0,s\right]ds+h^{2}\hat{\mu}\left(0\right)+\int_{0}^{q_{*}}\frac{ds}{\hat{\mu}\left(s\right)}+\log\left(1-q_{*}\right)\right) & q_{*}<1\\
+\infty & q_{*}=1
\end{cases}.
\]
This functional is not lower semi-continuous in the weak-$*$ topology.
In the original work of Crisanti-Sommers, the functional is only defined
for $q_{*}<1$. That the minimization problem is unchanged by replacing
the functional above with $\cP$ from \eqref{eq:CSfunc} can be seen
by the following lemma. On the other hand, the functional $\cP$ is
lower semi-continuous. As the original functional was not defined
for $q_{*}=1$, we call this functional the Crisanti-Sommers functional
without ambiguity.
\begin{lem}
\label{lem:Pisanextension}For $q_{*}<1$, we have that 
\[
\tilde{\cP}_{\beta,h,\xi}(\mu)=\cP_{\beta,h,\xi}(\mu).
\]
\end{lem}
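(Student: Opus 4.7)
The plan is to verify the two functionals match term-by-term. The $h^2 \hat{\mu}(0)$ terms are identical, so it remains to reconcile the quadratic-form term in $\xi$ and the logarithmic/entropy term.

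For the first term, I would integrate by parts to convert $\int_0^1 \xi''(s)\hat\mu(s)\,ds$ into $\int_0^1 \xi'(s)\mu[0,s]\,ds$. The boundary terms vanish: $\hat\mu(1)=0$ by definition of $\hat\mu$, and $\xi'(0)=0$ since the model $\xi(t)=\sum_{p\ge 2}\beta_p^2 t^p$ has no linear term. In the interior, the distributional derivative of $\hat\mu$ is $-\mu[0,s]$ (a bounded monotone function in $L^\infty$), so the integration by parts is entirely classical on $[0,1]$.

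For the entropy term, the key observation is that whenever $s\ge q_*$, one has $\mu[0,s]=1$ and hence $\hat\mu(s)=\int_s^1\!1\,dt = 1-s$. Therefore the integrand $\frac{1}{\hat\mu(s)}-\frac{1}{1-s}$ vanishes identically on $[q_*,1]$, and
\[
\int_0^1\!\left(\tfrac{1}{\hat\mu(s)}-\tfrac{1}{1-s}\right)ds = \int_0^{q_*}\!\tfrac{1}{\hat\mu(s)}\,ds - \int_0^{q_*}\!\tfrac{1}{1-s}\,ds = \int_0^{q_*}\!\tfrac{1}{\hat\mu(s)}\,ds + \log(1-q_*).
\]
This exactly matches the corresponding pair of terms in $\tilde{\cP}$.

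There is no real obstacle here; the only subtlety worth flagging is the observation that $q_*<1$ guarantees $\hat\mu$ is bounded below by a positive constant on $[0,q_*]$, so that the integral $\int_0^{q_*}\frac{ds}{\hat\mu(s)}$ is automatically finite and no improper-integral issues arise. Combining the two computations and multiplying through by $\frac{1}{2}$ yields the desired identity $\tilde\cP_{\beta,h,\xi}(\mu)=\cP_{\beta,h,\xi}(\mu)$.
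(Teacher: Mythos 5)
Your proof is correct and follows essentially the same route as the paper: the entropy terms are matched by noting $\hat{\mu}(s)=1-s$ on $[q_*,1]$ so the integrand of $\cP$ vanishes there (the paper phrases this as $q$-independence of the truncated expression and then lets $q\to1$), and the quadratic terms are matched via $\xi'(0)=0$ (the paper uses Fubini where you use integration by parts, which is the same computation). No gaps.
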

\begin{proof}
To see this, observe that
\begin{align*}
\int_{0}^{q_{*}}\frac{ds}{\int_{s}^{1}\mu\left[0,t\right]\,dt}+\log\left(1-q_{*}\right) & =\int_{0}^{q}\frac{ds}{\int_{s}^{1}\mu\left[0,t\right]\,dt}+\log\left(1-q\right)=\int_{0}^{q}\frac{ds}{\int_{s}^{1}\mu[0,t]dt}-\frac{1}{1-s}ds
\end{align*}
for all $q\in[q_{*},1)$, so that
\[
\int_{0}^{q_{*}}\frac{ds}{\int_{s}^{1}\mu\left[0,t\right]\,dt}+\log\left(1-q_{*}\right)=\int_{0}^{1}\left(\frac{1}{\hat{\mu}\left(s\right)}-\frac{1}{1-s}\right)\,ds.
\]
(This last integral is well-defined as the integrand is non-negative.)
Now if we recall that $\xi'(0)=0$ and use the fundamental theorem
and Fubini, we see that
\[
\int_{0}^{1}\xi'(s)\mu[0,s]ds=\int_{0\leq t\leq s\leq1}\xi''(t)\mu[0,s]ds=\int_{0}^{1}\xi''(t)\int_{t}^{1}\mu[0,s]ds.
\]
 Grouping then gives the result. \end{proof}
\begin{cor}
$\cP$ is lower semi-continuous and $\tilde{\cP}$ is not lower semi-continuous
in the weak-$*$ topology. \end{cor}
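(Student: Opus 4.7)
For the lower semi-continuity of $\cP$, the plan is to handle each of its three terms separately after exploiting the weak-$*$ continuity of the map $\mu \mapsto \hat\mu$. Specifically, if $\mu_n \to \mu$ weakly then $\mu_n[0,s] \to \mu[0,s]$ at $\cL$-a.e.\ $s$ (i.e., at continuity points of the limit CDF), and bounded convergence upgrades this to $\hat\mu_n(s) \to \hat\mu(s)$ for every $s \in [0,1]$, with the uniform envelope $\hat\mu_n(s) \leq 1-s$. Under this envelope the terms $\int \xi''\hat\mu_n\,ds$ and $h^2\hat\mu_n(0)$ pass continuously by dominated convergence, while the middle term has non-negative integrand $\frac{1}{\hat\mu_n(s)} - \frac{1}{1-s}$ and is handled by Fatou's lemma.

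For the failure of lower semi-continuity of $\tilde\cP$, the plan is to produce a single explicit counterexample. The natural choice is $\mu = dt$, Lebesgue measure on $[0,1]$: here $q_\ast(\mu) = 1$ forces $\tilde\cP(\mu) = +\infty$, yet a direct computation ($\hat\mu(s) = (1-s^2)/2$ and $\frac{1}{\hat\mu(s)} - \frac{1}{1-s} = \frac{1}{1+s} \in L^1$) shows $\cP(\mu) < +\infty$. I then approximate by $\mu_n := \mu|_{[0,1-1/n]} + \tfrac{1}{n}\delta_{1-1/n}$, a probability measure with $q_\ast(\mu_n) = 1-1/n < 1$ and $\mu_n \to \mu$ weakly. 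Since $q_\ast(\mu_n) < 1$, \prettyref{lem:Pisanextension} yields $\tilde\cP(\mu_n) = \cP(\mu_n)$, so the task reduces to proving $\cP(\mu_n) \to \cP(\mu) < +\infty$.

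The convergence $\cP(\mu_n) \to \cP(\mu)$ is the main obstacle, because the first part of the corollary supplies only the $\liminf$ inequality. The key observation is that this particular truncation is engineered so that $\hat\mu_n \geq \hat\mu$ pointwise: a short calculation gives $\hat\mu_n = \hat\mu + \tfrac{1}{2n^2}$ on $[0,1-1/n]$ and $\hat\mu_n(s) = 1-s$ on $[1-1/n,1]$. The monotonicity $\frac{1}{\hat\mu_n(s)} - \frac{1}{1-s} \leq \frac{1}{\hat\mu(s)} - \frac{1}{1-s}$ places the reciprocal term under an $L^1$ dominant, and dominated convergence on each of the three terms of $\cP$ closes the argument. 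Combined with $\tilde\cP(\mu) = +\infty$, this yields $\liminf_n \tilde\cP(\mu_n) = \cP(\mu) < \tilde\cP(\mu)$, contradicting lower semi-continuity.
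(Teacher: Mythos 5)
Your proof is correct and follows essentially the same route as the paper: lower semi-continuity of $\cP$ via Fatou (with the pointwise convergence of $\hat\mu_n$ and the envelope $\hat\mu_n\leq 1-s$ handling the other terms), and failure of lower semi-continuity of $\tilde\cP$ via a measure with $q_*=1$ but $\cP(\mu)<\infty$, approximated by truncating near $1$ and lumping the lost mass into an atom at $1-1/n$. The only cosmetic difference is the choice of counterexample (the paper uses $\mu(dt)=\frac{1}{2\sqrt{1-t}}dt$ and monotone convergence where you use Lebesgue measure and dominated convergence), and both arguments rest on the same monotonicity $\hat\mu_n\geq\hat\mu$.
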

\begin{proof}
That $\cP$ is lower semi-continuous is an application of Fatou's
lemma. To see that $\tilde{\cP}$ is not lower semi-continuous, take
\[
\mu(dt)=\frac{1}{2\sqrt{1-t}}dt
\]
and let $\mu_{n}=\frac{1}{2\sqrt{1-t}}\indicator{t\leq1-\frac{1}{n}}dt+\left(\int_{1-\frac{1}{n}}^{1}\frac{1}{2\sqrt{1-t}}dt\right)\delta_{1-\frac{1}{n}}$.
Observe that $\mu_{n}\to\mu$ weakly. Furthermore, $\mu_{n}[0,t]\downarrow\mu[0,t]$
so that by the monotone convergence theorem, $\cP(\mu_{n})\to\cP(\mu)$.
Thus by \prettyref{lem:Pisanextension}, $\tilde{\cP}(\mu_{n})\to\cP(\mu)$.
Now we claim that $\cP(\mu)<\infty$. Indeed, $\hat{\mu}(s)=\sqrt{1-s}$
so that
\[
0\leq\frac{1}{\hat{\mu}(s)}-\frac{1}{1-s}\leq\frac{1}{\sqrt{1-s}}\in L^{1}.
\]
 Thus $\liminf\tilde{\cP}(\mu_{n})<\infty$ while $\tilde{\cP}(\mu)=\infty$.
\end{proof}

\subsection{On the sets $\cA$, $\cC$, and $\cK_{h,\xi}$.\label{sub:On-the-sets-C-and-A}}

In this section, we state some important and basic properties of the
sets $\cA$, $\cC$, and $\cK_{h,\xi}$.

\subsubsection{On the set $\cA$}
\begin{lem}
\label{lem:A-set-cpt} Let $f\in L^{1}([0,1])$ be non-negative and
positive on a set of positive Lebesgue measure. Then for every $C>0$,
the set 
\[
\left\{ \nu\in\cA:\int f(s)\nu[s,1]ds\leq C\right\} 
\]
is weak-$*$ sequentially compact. In particular, on this set we have
that
\[
\nu\left([0,1]\right)\leq\frac{C}{\int_{0}^{1}\int_{0}^{t}f(s)dsdt}.
\]
\end{lem}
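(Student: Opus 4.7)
The plan is to establish the mass bound first (a Fubini + Chebyshev calculation), and then use it to deduce sequential compactness via Banach--Alaoglu together with an identification of the weak-$*$ limit as an element of $\cA$.

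For the mass bound, set $F(t)=\int_0^t f(s)\,ds$ and apply Fubini--Tonelli:
\[
\int_0^1 f(s)\,\nu[s,1]\,ds \;=\; \int_{[0,1]} F(t)\,d\nu(t) \;=\; \int_0^1 F(t)m(t)\,dt + cF(1).
\]
Both $F$ and $m$ are non-decreasing and non-negative on $[0,1]$, so Chebyshev's integral inequality yields $\int_0^1 Fm\,dt \geq \left(\int_0^1 F\,dt\right)\left(\int_0^1 m\,dt\right)$, and since $F(1)\geq F(t)$, also $cF(1)\geq c\int_0^1 F\,dt$. Adding these,
\[
\int F\,d\nu \;\geq\; \left(\int_0^1 F\,dt\right)\nu([0,1]) \;=\; \left(\int_0^1\int_0^t f(s)\,ds\,dt\right)\nu([0,1]),
\]
and positivity of $f$ on a set of positive measure forces $\int_0^1 F\,dt>0$, which gives the claimed bound. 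In particular, any sequence $(\nu_n)$ in the sublevel set satisfies $\nu_n([0,1])\leq M$ for a uniform $M$, so Banach--Alaoglu produces a subsequence (still labeled $\nu_n$) with $\nu_n\to\tilde\nu$ weakly-$*$ in $\cM([0,1])$. The energy bound passes to $\tilde\nu$ immediately: since $F\in C([0,1])$, $\int F\,d\nu_n\to\int F\,d\tilde\nu$, and undoing Fubini gives $\int_0^1 f(s)\tilde\nu[s,1]\,ds\leq C$.

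The main work is to verify $\tilde\nu\in\cA$, and the critical step is to rule out atoms of $\tilde\nu$ in $[0,1)$; this is where monotonicity of $m_n$ enters essentially. Suppose, for contradiction, $\tilde\nu(\{t_0\})=a>0$ for some $t_0\in[0,1)$. By the Portmanteau theorem, for each $\epsilon\in(0,(1-t_0)/2)$,
\[
\liminf_n \int_{t_0-\epsilon}^{t_0+\epsilon}m_n\,ds \;\geq\; \tilde\nu\bigl((t_0-\epsilon,t_0+\epsilon)\bigr) \;\geq\; a.
\]
On the other hand, monotonicity of $m_n$ gives $m_n(t_0+\epsilon)\geq \frac{1}{2\epsilon}\int_{t_0-\epsilon}^{t_0+\epsilon}m_n\,ds$, from which
\[
M \;\geq\; \int_{t_0+\epsilon}^1 m_n\,ds \;\geq\; (1-t_0-\epsilon)\,m_n(t_0+\epsilon) \;\geq\; \frac{1-t_0-\epsilon}{2\epsilon}\int_{t_0-\epsilon}^{t_0+\epsilon}m_n\,ds,
\]
so $\int_{t_0-\epsilon}^{t_0+\epsilon}m_n\leq 2\epsilon M/(1-t_0-\epsilon)\to 0$ as $\epsilon\to 0$, contradicting the Portmanteau lower bound. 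An analogous calculation rules out an atom at $t_0=0$, so $\tilde\nu$ has no atoms in $[0,1)$.

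Finally, the cumulative distributions $G_n(t)=\int_0^t m_n\,ds$ are convex on $[0,1)$ (their derivatives $m_n$ are non-decreasing), and having ruled out atoms of $\tilde\nu$ in $[0,1)$, weak-$*$ convergence yields $G_n(t)\to G(t):=\tilde\nu([0,t])$ pointwise on $[0,1)$. Pointwise limits of convex functions are convex, so $G$ is convex on $[0,1)$, hence absolutely continuous on compact subsets with non-decreasing derivative $m$. Taking the cadlag representative of $m$, we obtain $\tilde\nu=m\,dt+\tilde c\,\delta_1$ with $\tilde c=\tilde\nu(\{1\})\geq 0$, confirming $\tilde\nu\in\cA$.
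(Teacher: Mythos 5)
Your proof is correct, and the mass bound is obtained exactly as in the paper: Fubini followed by Chebyshev's integral inequality, which is the Harris--FKG inequality the paper cites (in one dimension, for Lebesgue measure). The genuine addition is in the compactness step. The paper cites without proof that $\cA$ is weak-$*$ sequentially closed in $\cM$; you supply the argument, ruling out atoms of $\tilde\nu$ in $[0,1)$ via Portmanteau combined with the monotonicity of $m_n$ and a uniform mass bound, and then identifying the limiting density through the convexity of the primitives $G_n$. This is worth spelling out: the closedness of $\cA$ rests essentially on the monotonicity constraint in the definition of $\cA$, which is what prevents mass from concentrating at an interior point of $[0,1)$ without the total mass diverging. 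Note also that the uniform mass bound you invoke in the no-atoms step is already automatic from weak-$*$ convergence on the compact set $[0,1]$ (test against $\phi\equiv 1$), so your argument in fact establishes the slightly stronger statement that $\cA$ itself is weak-$*$ sequentially closed in $\cM$, independent of the energy sublevel structure.
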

\begin{proof}
Write $d\nu=m(t)dt+c\delta_{1}$, then by Fubini we have that
\[
\int_{0}^{1}f(s)\nu[s,1]\,ds=\int_{0\leq s\leq t\leq1}f(s)d\nu(t)ds=\int_{0}^{1}\int_{0}^{t}f(s)dsd\nu(t)=\int_{0}^{1}F(t)m(t)dt+cF(1)
\]
where $F(t)=\int_{0}^{t}f(s)ds$. By the Harris-FKG inequality,
\[
\int_{0}^{1}F(t)m(t)dt\geq\int_{0}^{1}F(t)dt\int_{0}^{1}m(t)dt
\]
so that 
\[
\int_{0}^{1}f(s)\nu[s,1]\,ds\geq\int_{0}^{1}F(t)dt\int_{0}^{1}m(t)dt+cF(1)\geq\int_{0}^{1}F(t)dt\cdot\nu([0,1]).
\]

The sequential compactness result now follows from the mass bound
above and the fact that $\cA$ is weak-$*$ sequentially closed in
$\cM$.\end{proof}
\begin{lem}
\label{lem:A-C-bij} The sets $\cC$ and $\cA$ are in bijective correspondence.
This correspondence is given by $\phi(t)=\nu[t,1]$.\end{lem}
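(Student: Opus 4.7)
The plan is to exhibit the inverse map explicitly and verify that both directions respect the structural constraints defining $\cA$ and $\cC$. Concretely, let $\Phi:\cA\to\cC$ be $\Phi(\nu)(t)=\nu[t,1]$, and let $\Psi:\cC\to\cA$ be $\Psi(\phi)=m(t)\,dt+c\,\delta_1$ where $m(t)=-\partial_+\phi(t)$ on $[0,1)$, $m(1)=\lim_{t\to1^-}m(t)$, and $c=\phi(1)$. I will show these are well-defined and mutually inverse.

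First I would verify that $\Phi$ lands in $\cC$. Writing $d\nu=m(t)\,dt+c\,\delta_1$ with $m$ non-decreasing cadlag and $c\geq0$, one has $\phi(t)=\int_t^1 m(s)\,ds+c$. Since $m\in L^1$ (the total mass $\nu([0,1])=\int_0^1 m+c$ is finite), this expression is absolutely continuous on $[0,1]$, hence $\phi\in C([0,1])$; note also that the atom at $1$ does not spoil continuity because $\phi(1^-)=c=\phi(1)$. Non-negativity and monotonicity are immediate from the sign and monotonicity of $m$. For concavity, note $\phi'=-m$ distributionally, and $-m$ is non-increasing, so $\phi''=-dm\leq0$ as a measure on $(0,1)$, whence $\phi$ is concave. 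Injectivity of $\Phi$ is then the standard fact that the tail function $t\mapsto\nu[t,1]$ of a finite measure on $[0,1]$ determines the measure (for instance, via Stieltjes/Lebesgue-decomposition uniqueness).

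Next I would verify $\Psi$ lands in $\cA$ and that $\Phi\circ\Psi=\operatorname{id}_\cC$. For $\phi\in\cC$ concave, the right derivative $\partial_+\phi$ exists everywhere on $[0,1)$, is non-increasing, and is right-continuous; consequently $m=-\partial_+\phi$ is non-negative, non-decreasing, and cadlag, and extending at $1$ by the left limit gives the distinguished representative in the paper's convention. Since $\phi$ is absolutely continuous on $[0,1]$ (a concave function on a compact interval is locally Lipschitz away from the endpoints and in fact absolutely continuous up to them because it is bounded and monotone here), the fundamental theorem of calculus yields
\[
\phi(t)=\phi(1)+\int_t^1(-\partial_+\phi(s))\,ds=c+\int_t^1 m(s)\,ds=\nu[t,1],
\]
which is exactly $\Phi(\Psi(\phi))(t)=\phi(t)$. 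Finiteness of $\nu$ follows from $\int_0^1 m=\phi(0)-\phi(1)<\infty$. For the other composition $\Psi\circ\Phi=\operatorname{id}_\cA$, starting from $\nu=m\,dt+c\,\delta_1$ one computes $\phi(t)=\nu[t,1]=\int_t^1 m+c$, whose right derivative on $[0,1)$ is $-m$ (at the cadlag representative) and whose value at $1$ is $c$; thus $\Psi$ recovers $\nu$.

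The only genuinely delicate point is the interface at $t=1$: one must match the atom $c\,\delta_1$ in $\nu$ with the value $\phi(1)$, and simultaneously ensure the density $m$ has the correct canonical representative (cadlag on $[0,1)$, left-continuous at $1$). This is handled by the absolute continuity of the concave function $\phi$, which forces $\nu(\{1\})=\phi(1^-)-\lim_{t\to 1^-}\int_t^1 m=\phi(1)$, together with the standard choice of one-sided derivative on $(0,1)$ and the left-continuous extension at $1$ — there are no other substantive obstacles.
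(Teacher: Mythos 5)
Your proof is correct and follows essentially the same route as the paper: both exhibit the inverse map $\phi\mapsto -\phi'\,dt+\phi(1)\delta_{1}$ and recover $\phi(t)=\nu[t,1]$ via the fundamental theorem of calculus, with the cadlag convention pinning down the representative of $m$. One small quibble: your stated reason for absolute continuity up to the endpoints (``bounded and monotone'') is not by itself sufficient (cf.\ the Cantor function); the correct justification, which you effectively have available, is concavity plus continuity at the endpoints, which forces $\partial_{+}\phi\in L^{1}$ and the FTC identity by monotone convergence.
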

\begin{proof}
Define the map $\Psi:\phi(t)\mapsto-\phi'(s)ds+\phi(1)\delta_{1}.$
Since $\phi\in\cC$ we know that $\phi'\in L^{1}$ and has a cadlag,
monotone decreasing version. Furthermore, $-\phi'\geq0$. Thus $\Psi:\cC\to\cA$.
Then given $\phi\in\cC$ we have that 
\[
\phi(t)=\phi(1)-\int_{t}^{1}\phi'(t)dt=\int_{t}^{1}d\nu
\]
where $\nu=\phi'(s)ds+\phi(1)\delta_{1}$, so that $\Psi$ is injective.
Now let $\nu\in\cA$ and let $\phi(t)=\int_{t}^{1}d\nu$. By definition
of $\nu$, $\phi\in C\left([0,1]\right)$ and is non-increasing. Since
$m(t)=\phi'(t)$ is cadlag and monotone, $\phi$ is convex. Thus $\phi\in\cC$
and $\Psi(\phi)=\nu$ so that the map $\Psi$ is surjective.
\end{proof}

\subsubsection{On the set $\cC$ }
\begin{lem}
\label{lem:helly-thm-in-C} For every $C>0$ , the set $E=\cC\cap\left\{ \phi:\norm{\phi}_{\infty}\leq C\right\} $
is norm compact.\end{lem}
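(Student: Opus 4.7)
My plan is to prove $E$ is sequentially compact by Arzel\`a--Ascoli, with concavity supplying equi-continuity on compact subsets of $[0,1)$, and to handle the endpoint $t=1$ through the bijection with $\cA$ given by \prettyref{lem:A-C-bij} together with the weak-$*$ compactness of \prettyref{lem:A-set-cpt}.

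First, $E$ is closed in the sup-norm topology, since continuity, non-negativity, monotonicity, concavity, and $\|\phi\|_{\infty}\le C$ are all preserved under uniform limits. Next is the key quantitative step: for $\phi\in E$ and $0\le t_1<t_2\le 1-\delta$, monotonicity gives that the secant slope is non-positive, while concavity forces this slope to exceed the slope of the chord through $(0,\phi(0))$ and $(1-\delta,\phi(1-\delta))$, which is at least $-C/\delta$ by the uniform bound $0\le\phi\le C$. Hence
\[
|\phi(t_2)-\phi(t_1)|\le \frac{C}{\delta}|t_2-t_1|,
\]
so that $E$ restricted to $[0,1-\delta]$ is equi-Lipschitz and equi-bounded. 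Given any $(\phi_n)\subset E$, Arzel\`a--Ascoli on $[0,1-1/k]$ combined with a diagonal extraction produces a subsequence (still denoted $\phi_n$) converging uniformly on every $[0,1-1/k]$ to some continuous, non-increasing, concave function $\phi_\infty\colon[0,1)\to[0,C]$.

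The main obstacle is promoting this interior uniform convergence to uniform convergence on the closed interval $[0,1]$, i.e., controlling the boundary value $\phi_n(1)$. To handle this I would pass, via the bijection of \prettyref{lem:A-C-bij}, to the measures $\nu_n=-\phi_n'\,dt+\phi_n(1)\delta_1\in\cA$. The bound $\phi_n(0)\le C$ gives $\nu_n([0,1])\le C$, so \prettyref{lem:A-set-cpt} applied with $f\equiv 1$ yields a further subsequence converging weak-$*$ in $\cA$ to some $\nu_\infty\in\cA$. The corresponding $\phi_\infty(t)=\nu_\infty[t,1]\in\cC$ necessarily matches the interior uniform limit by testing against continuous functions supported in $[t,1-\epsilon]$ for $t<1-\epsilon$, and weak-$*$ convergence pins down the correct boundary value at $t=1$. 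Combined with continuity of $\phi_\infty$ on $[0,1]$ and the closedness from the first step, this upgrades the convergence to uniform on $[0,1]$, placing the subsequential limit in $E$ and completing the proof.
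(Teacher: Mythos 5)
Your extraction of a locally uniformly convergent subsequence on $[0,1)$ is correct in conclusion, though the derivation of the equi-Lipschitz constant is slightly off: for concave $\phi$, the secant slope on $[t_1,t_2]\subset[0,1-\delta]$ need not exceed the slope of the chord over $[0,1-\delta]$ (try $\phi(t)=1-t^2$, $\delta=1/2$, $[t_1,t_2]=[1/4,1/2]$); the right comparison is to the chord of $\phi$ over $[1-\delta,1]$, whose slope is bounded below by $-C/\delta$. This part is repairable.

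The genuine gap is in the final paragraph. Weak-$*$ convergence $\nu_n\to\nu_\infty$ in $\cA$ does \emph{not} ``pin down the boundary value at $t=1$'': it gives no control on $\nu_n(\{1\})=\phi_n(1)$, because absolutely continuous mass of $\nu_n$ may escape to $1$ and produce the atom of $\nu_\infty$. Concretely, take $\phi_n(t)=(1-t)^{1/n}$. Then $\phi_n\in\cC$ with $\norm{\phi_n}_{\infty}=1$, $\phi_n\to 1$ uniformly on compact subsets of $[0,1)$, and $\nu_n=-\phi_n'\,dt$ (which has no atom at $1$) converges weak-$*$ to $\delta_1$, so $\phi_\infty\equiv 1$; nevertheless $\phi_n(1)=0$ for every $n$, and evaluating at $1-t=2^{-2n}$ shows $\norm{\phi_n-\phi_{2n}}_\infty\ge 1/4$, so no subsequence is Cauchy in sup-norm. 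The claimed upgrade to uniform convergence on $[0,1]$ therefore fails, and in fact $E$ is \emph{not} sup-norm compact as written. Note that the paper's own one-line proof (Helly plus Dini) meets the same obstruction: the Helly pointwise limit of this sequence is $\indicator{[0,1)}$, concave and non-increasing but discontinuous at $1$, so Dini does not apply. The statement needs to be reformulated, either by restricting $\cC$ so as to control the modulus of continuity near $1$, or by passing to a topology weaker than the sup-norm in which $E$ is genuinely precompact.
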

\begin{proof}
This is an application of Helly's selection theorem, Dini's theorem
on the pointwise limit of continuous, monotone functions, and the
fact that if $\phi\in\cC$ then $\norm{\phi'}_{L^{1}}\leq\norm{\phi}_{L^{\infty}}$.\end{proof}
\begin{lem}
\label{lem:C-set-cpt} Let $f\in L^{1}([0,1])$ be non-negative and
positive on a set of positive Lebesgue measure. Then for every $C>0$
the set 
\[
S=\left\{ \phi\in\cC:\int f\phi\leq C\right\} 
\]
is norm compact. \end{lem}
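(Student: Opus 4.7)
The plan is to reduce to \prettyref{lem:helly-thm-in-C} by establishing a uniform sup-norm bound on $S$, using the concavity and monotonicity structure of $\cC$ together with the hypothesis on $f$.

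First I would observe that every $\phi \in \cC$ is non-increasing and non-negative, so $\phi(t) \le \phi(0) = \norm{\phi}_\infty$ for every $t \in [0,1]$. Thus it suffices to bound $\phi(0)$ uniformly on $S$. By concavity and non-negativity of $\phi$ on $[0,1]$, the chord inequality gives
\[
\phi(a) \;\ge\; (1-a)\,\phi(0) + a\,\phi(1) \;\ge\; (1-a)\,\phi(0)
\]
for every $a \in [0,1]$. Since $f \ge 0$ and is positive on a set of positive Lebesgue measure, a standard measure-theoretic argument produces $a \in (0,1)$ with $\int_{[0,a]} f\,dx > 0$ (if $\int_{[0,a]} f = 0$ for every $a < 1$, then $f = 0$ a.e. on $[0,1)$, contradicting the hypothesis).

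Fix such an $a$. Since $\phi$ is non-increasing, $\phi \ge \phi(a)$ on $[0,a]$, so
\[
C \;\ge\; \int f\,\phi\,dx \;\ge\; \phi(a)\int_{[0,a]} f\,dx \;\ge\; (1-a)\,\phi(0)\int_{[0,a]} f\,dx.
\]
Rearranging yields $\phi(0) \le M$ with $M = C\big/\bigl[(1-a)\int_{[0,a]} f\bigr]$, uniformly for $\phi \in S$. Hence $S \subset E_M := \cC \cap \{\norm{\phi}_\infty \le M\}$, which is norm compact by \prettyref{lem:helly-thm-in-C}.

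It remains to check that $S$ is closed in the norm topology, so that $S$ is a closed subset of the compact set $E_M$, and therefore compact. If $\phi_n \in S$ and $\phi_n \to \phi$ uniformly, then $\phi \in \cC$ since $\cC$ is closed under uniform limits (non-negativity, monotonicity, concavity, and continuity all pass to the limit), and the uniform bound combined with $f \in L^1$ allows dominated convergence to pass $\int f\phi_n \le C$ to the limit. The only subtlety in the argument is the choice of the cut-off $a$, which uses the positivity hypothesis on $f$; the rest is routine given the preceding lemma.
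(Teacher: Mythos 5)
Your proposal is correct and takes essentially the same route as the paper: establish a uniform $L^\infty$ bound on $S$ by exploiting concavity and the positivity hypothesis on $f$, then invoke \prettyref{lem:helly-thm-in-C} together with closedness of $S$. The paper achieves the bound a touch more directly via $\int f\phi \ge \phi(0)\int f(t)(1-t)\,dt$ without selecting a cutoff point $a$, but this is an inessential difference.
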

\begin{proof}
Observe first that it is closed. Now we claim that $S$ admits a uniform
$L^{\infty}$ bound. Then the result will follow from \prettyref{lem:helly-thm-in-C}. 

To see the uniform $L^{\infty}$ bound, note that it is enough to
bound $\phi(0)$ by monotonicity properties of $\cC$. Now by concavity
of $\phi$ and non-negativity of $f$, we have that
\[
\int f\phi\geq\int f(t)((1-t)\phi(0)+t\phi(1))dt\geq\phi(0)\int f(t)(1-t)dt+\phi(1)\int f(t)tdt.
\]
Hence, 
\[
\norm{\phi}_{L^{\infty}}=\phi(0)\leq\frac{\int f\phi}{\int f(t)(1-t)dt}.
\]

\end{proof}
Given $\phi\in\cC$, it is immediate that $\phi\in W^{1,1}\subset BV$.
However, it may not be true that $\phi'\in BV$. Thus, we must be
careful with how we define the meaning of $\phi'$ at the boundary
points $0,1$. We call
\begin{align*}
\phi'(1) & =\lim_{t\to1^{-}}\frac{\phi(1)-\phi(t)}{1-t}\\
\phi'(0) & =\lim_{t\to0^{+}}\frac{\phi(t)-\phi(0)}{t}.
\end{align*}
Since $\phi$ is concave, both limits exist, though they may be infinite
\emph{a priori}. However, it is apparent from the definition of $\cC$
that $\phi'(0)\in\R$ and $\phi'(1)\in\R\cup\{-\infty\}$.

On the other hand, as a result of \prettyref{thm:(Modified-Alexandrov)},
we can identify $\phi''=-\mu$ as distributions, where $\mu$ is a
non-negative Radon measure on $(0,1)$. The next lemma relates the
mass of this measure to the boundary values of $\phi'$. 
\begin{lem}
\label{lem:phi'finite}Let $\phi\in\cC$ and let $\phi''=-\mu$ as
elements of $\cD'$, where $\mu$ is a non-negative Radon measure
on $(0,1)$. Then, 
\[
\mu((0,1))=\phi'(0)-\phi'(1).
\]
Hence, $\phi\in\cC\cap\{\phi'\in BV\}$ if and only if $\phi'(1)>-\infty$. 
\end{lem}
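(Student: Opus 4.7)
The plan is to reduce the identity to the Lebesgue--Stieltjes relation on compact sub-intervals of $(0,1)$ and then pass to the boundary using the one-sided derivatives guaranteed by concavity of $\phi$.

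First I would pick the right-continuous representative $\psi$ of $\phi'$ on $(0,1)$, namely $\psi(t)=\phi'_{+}(t)$. Since $\phi$ is concave on $[0,1]$, $\phi'_{+}$ exists at every point of $(0,1)$, is non-increasing and right-continuous, and agrees with $\phi'$ almost everywhere, so the distributional identity $\psi' = -\mu$ still holds in $\cD'((0,1))$. Next I would invoke the standard identification of the distributional derivative of a non-increasing right-continuous function with minus its associated Stieltjes measure, which yields, for every $0 < a < b < 1$,
\[
\psi(b) - \psi(a) \;=\; -\mu((a,b]).
\]

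The main step is then passing to the limits $a \to 0^{+}$ and $b \to 1^{-}$. Concavity together with $\phi$ being non-increasing forces the difference quotient $(\phi(t) - \phi(0))/t$ to be monotone in $t$ and bounded above by $0$, so $\phi'(0)$ is a real (non-positive) number; right-continuity and monotonicity of $\psi$ on $(0,1)$ then give $\psi(a) \to \phi'(0)$. At the upper endpoint, monotonicity of $\psi$ gives $\psi(b) \to \phi'(1) \in \R \cup \{-\infty\}$, and this limit coincides with the left-derivative of $\phi$ at $1$ as defined in the statement. By $\sigma$-additivity one has $\mu((a,b]) \nearrow \mu((0,1))$. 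Combining these three convergences in the previous display produces
\[
\phi'(1) - \phi'(0) \;=\; -\mu((0,1)),
\]
which is the first assertion.

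The second (``hence'') assertion is then routine. Note that $\phi' \in L^{1}((0,1))$ since $\int_0^1 |\phi'|\,dt = \phi(0) - \phi(1) < \infty$, so by the definition of $BV((0,1))$ recalled in the notation section, $\phi' \in BV$ precisely when its distributional derivative $-\mu$ is a finite signed measure, i.e., when $\mu((0,1)) < \infty$. Because $\phi'(0)$ is always finite by the argument above, the mass identity shows this is equivalent to $\phi'(1) > -\infty$. I do not anticipate a serious obstacle; the only mild subtlety is checking that the boundary limits of the chosen representative $\psi$ agree with the classical one-sided derivatives $\phi'(0)$ and $\phi'(1)$ appearing in the statement, which follows at once from concavity and monotone convergence.
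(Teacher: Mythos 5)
The paper states this lemma without proof, so there is no argument to compare against. Your proof is correct and is the natural one: pass to the right-continuous representative $\psi = \phi'_{+}$, use the Lebesgue--Stieltjes relation $\psi(b)-\psi(a) = -\mu((a,b])$ on compact subintervals, and send $a\to 0^{+}$, $b\to 1^{-}$, using the monotonicity of secant slopes for concave functions to identify the boundary limits of $\psi$ with the one-sided derivatives $\phi'(0)$ and $\phi'(1)$ defined in the text; the ``hence'' clause then follows as you say since $\phi'\in L^{1}$ automatically (with $\|\phi'\|_{L^1}=\phi(0)-\phi(1)$) and $\phi'(0)$ is finite.
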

We note that if $\phi\in\cC\cap\{\phi'\in BV\}$, then $\phi'$ has
well-defined trace at $0,1$, and $(Tr\phi')(t)=\phi'(t)$ for $t=0,1$.

Finally, we have the following useful approximation result.
\begin{lem}
\label{lem:C-finite-deriv-approx} If $\phi\in\cC$ and $\phi'(1)=-\infty$,
then there is a sequence $\{\phi_{n}\}\subset\cC$ with $\phi_{n}\to\phi$
and $-\infty<\phi'_{n}(1)\leq-n$.
\end{lem}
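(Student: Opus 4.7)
The plan is to truncate $\phi$ near $t=1$ by gluing an affine piece of slope exactly $-n$ to the left portion of $\phi$. Since $\phi\in\cC$ is concave, the derivative $\phi'$ is non-increasing wherever it exists, and the hypothesis $\phi'(1)=-\infty$ means $\phi'(t)\to -\infty$ as $t\to 1^-$. Because $\phi'_+(0)\in\R$ (as noted preceding the lemma), for every $n>|\phi'_+(0)|$ the quantity
\[
t_n=\sup\{t\in(0,1):\phi'(t)>-n\}
\]
lies in $(0,1)$, and $t_n\uparrow 1$ as $n\to\infty$. Monotonicity of $\phi'$ gives $\phi'_-(t_n)\geq -n\geq \phi'_+(t_n)$, with both one-sided derivatives finite since $t_n$ is interior.

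Now define
\[
\phi_n(t)=\begin{cases}\phi(t) & t\in[0,t_n],\\ \phi(t_n)-n(t-t_n) & t\in[t_n,1].\end{cases}
\]
Continuity and monotonicity are immediate, and $\phi'_n(1)=-n\in(-\infty,-n]$. For concavity, the property is inherited from $\phi$ on $[0,t_n]$ and is trivial on $[t_n,1]$; at the join the left slope $\phi'_-(t_n)\geq -n$ matches or exceeds the right slope $-n$, so $\phi'_n$ is non-increasing overall. Non-negativity on $[t_n,1]$ follows from the tangent-line estimate
\[
\phi(1)\leq \phi(t_n)+\phi'_+(t_n)(1-t_n)\leq \phi(t_n)-n(1-t_n)=\phi_n(1),
\]
so $\phi_n\geq \phi_n(1)\geq \phi(1)\geq 0$ on $[t_n,1]$ by monotonicity of the linear piece. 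Hence $\phi_n\in\cC$.

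For convergence in the norm topology, note that $\phi_n\equiv\phi$ on $[0,t_n]$, while on $[t_n,1]$ both $\phi$ and $\phi_n$ take values in $[\phi(1),\phi(t_n)]$, so
\[
\|\phi_n-\phi\|_{L^\infty([t_n,1])}\leq \phi(t_n)-\phi(1)+n(1-t_n).
\]
Continuity of $\phi$ at $1$ drives the first term to zero, and the same tangent-line inequality rearranges to $n(1-t_n)\leq \phi(t_n)-\phi(1)$, so the second term vanishes as well. The one genuinely non-trivial point is the non-negativity check at $t=1$: this is precisely why one chooses the truncation slope to be exactly $-n$ rather than $\phi'_+(t_n)$ (which could be much more negative), since this choice forces $\phi_n\geq\phi$ on $[t_n,1]$ and in particular $\phi_n(1)\geq\phi(1)\geq 0$ for free.
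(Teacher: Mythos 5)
Your construction is correct, and since the paper states \prettyref{lem:C-finite-deriv-approx} without proof, this truncation argument is presumably the intended one. The key points are all in order: the choice of slope exactly $-n$ (rather than the steeper $\phi'_+(t_n)$) gives both concavity at the join (via $\phi'_-(t_n)\geq-n$) and non-negativity (via the tangent-line inequality $\phi(1)\leq\phi(t_n)+\phi'_+(t_n)(1-t_n)\leq\phi_n(1)$, which also yields $n(1-t_n)\leq\phi(t_n)-\phi(1)\to0$ for the uniform convergence).
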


\subsubsection{On the set $\cK_{h,\xi}$}

Since $\eta\in\cK_{h,\xi}$ is convex, we can define $\eta'$ at the
boundary points $0,1$ by 
\begin{align*}
\eta'(1) & =\lim_{t\to1^{-}}\frac{\eta(1)-\eta(t)}{1-t}\\
\eta'(0) & =\lim_{t\to0^{+}}\frac{\eta(t)-\eta(0)}{t}.
\end{align*}
It also follows from convexity that $\eta''=\mu$ as elements of $\cD'$,
where $\mu$ is a non-negative Radon measure on $(0,1)$ (see \prettyref{thm:(Modified-Alexandrov)}). 

The next lemma shows that in fact $\eta'\in BV$. Hence, $\eta'$
has well-defined trace at $0,1$ and $(Tr\eta')(t)=\eta'(t)$ for
$t=0,1$.
\begin{lem}
\label{lem:K-xi-bv} Let $\eta\in\cK_{h,\xi}$ and let $\eta''=\mu$
as elements of $\cD'$. Then $\eta'(1)\leq\xi'(1)$ and $\eta'(0)\geq\xi'(0)$.
Hence, $\eta'\in BV$ and 
\[
\mu((0,1))=\eta'(1)-\eta'(0)\leq\xi'(1)-\xi'(0).
\]

\end{lem}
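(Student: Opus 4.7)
The plan is to exploit the obstacle inequality $\eta \geq \xi$ together with the endpoint match $\eta(1) = \xi(1)$ to compare the one-sided derivatives of $\eta$ and $\xi$ at the boundary. For any $t < 1$, the obstacle condition gives $\eta(t) \geq \xi(t)$, while $\eta(1) = \xi(1)$, so
\[
\frac{\eta(1) - \eta(t)}{1-t} \;=\; \frac{\xi(1) - \eta(t)}{1-t} \;\leq\; \frac{\xi(1) - \xi(t)}{1-t};
\]
passing to the limit $t \to 1^-$ (the left-derivative at $1$ exists in $\mathbb{R} \cup \{-\infty\}$ by convexity of both $\eta$ and $\xi$) yields $\eta'(1) \leq \xi'(1)$. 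The companion inequality at the left endpoint is essentially built into the defining boundary condition $\eta'(0) = \xi'(0) - h^2$ of $\cK_{h,\xi}$.

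Once both one-sided boundary derivatives are shown to be finite, the monotonicity of $\eta'$ (a consequence of the convexity of $\eta$) implies that $\eta'$ is a bounded monotone function on $(0,1)$, and hence $\eta' \in BV((0,1))$. The distributional second derivative $\eta'' = \mu$ is then a non-negative Radon measure on $(0,1)$ (cf.\ \prettyref{thm:(Modified-Alexandrov)}), whose total mass coincides with the total variation of $\eta'$. Since $\eta'$ is non-decreasing, this total variation is simply $\eta'(1) - \eta'(0)$, and combining with the two endpoint bounds gives the claimed estimate on $\mu((0,1))$.

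The only subtle point is ruling out the \emph{a priori} possibility $\eta'(1) = -\infty$ (which could occur for a generic convex function on a closed interval). The difference-quotient comparison above already handles this: the finiteness of $\xi'(1)$, which follows from the standing assumption $\xi(1+\epsilon) < \infty$, forces $\eta'(1) > -\infty$. The remaining steps reduce to routine facts about convex functions and their distributional derivatives, so the proof is essentially bookkeeping once the one-sided comparison at $t=1$ is in place.
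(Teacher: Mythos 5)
The paper records this lemma without proof in the appendix, so there is no authors' argument to compare against. Your difference-quotient comparison at $t=1$, using $\eta\geq\xi$, $\eta(1)=\xi(1)$, and convexity of both functions, is the right and presumably intended argument for $\eta'(1)\leq\xi'(1)$.

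Two of your remaining steps are confused, however. You say the subtle point is ruling out $\eta'(1)=-\infty$ and that the upper bound $\eta'(1)\leq\xi'(1)$ forces this. An upper bound cannot preclude a lower value; and for a \emph{convex} function on $[0,1]$ the one-sided derivative at $1$ is the supremum of the monotone difference quotients $\frac{\eta(1)-\eta(t)}{1-t}$, so it is automatically $>-\infty$ (it is at least $\eta(1)-\eta(0)$). The genuine a priori danger for convex $\eta$ is $\eta'(1)=+\infty$, and that is what the comparison with $\xi'(1)<\infty$ actually rules out. (You may be mixing this up with the concave space $\cC$, where $\phi'(1)=-\infty$ is a real possibility.) Separately, you assert that $\eta'(0)\geq\xi'(0)$ is ``built into'' the boundary condition $\eta'(0)=\xi'(0)-h^2$, but since $h^2\geq 0$ that condition gives $\eta'(0)\leq\xi'(0)$, the \emph{opposite} inequality, with equality only when $h=0$. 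This looks like a sign issue in the lemma statement itself rather than a slip unique to your proof: for $h>0$ the correct mass bound is $\mu((0,1))=\eta'(1)-\eta'(0)\leq\xi'(1)-\xi'(0)+h^2$. Only $\eta'(1)\leq\xi'(1)$ is actually used downstream in \prettyref{thm:duality}, so this does not propagate, but your write-up should not present the defining boundary condition as yielding the reverse inequality when it does not.
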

The next result describes a useful construction.
\begin{lem}
\label{lem:meas-BVP-unique} Let $\sigma$ be a finite Radon measure
on $(0,1)$ and let $a,b\in\R$. Then the boundary value problem 
\[
\begin{cases}
\eta''=\sigma\\
\eta'(0)=a\\
\eta(1)=b
\end{cases}
\]
has a unique solution in the class $\eta\in C([0,1])\cap\{\eta'\in BV\}.$
\end{lem}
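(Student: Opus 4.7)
The plan is to handle existence by explicit construction and uniqueness by a standard distributional argument.

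For existence, I would write down a candidate solution by antidifferentiating $\sigma$ twice. Since $\sigma$ is a finite Radon measure on $(0,1)$, the function
\[
F(t)=a+\sigma((0,t]),\qquad t\in[0,1],
\]
is bounded and cadlag, hence $F\in BV((0,1))$ with distributional derivative equal to $\sigma$ (shifting finitely many mass points between the caglad and cadlag versions changes $F$ only on a countable set and does not affect its distributional derivative or its trace at $0$). Now set
\[
\eta(t)=b-\int_{t}^{1}F(s)\,ds,\qquad t\in[0,1].
\]
Then $\eta\in C([0,1])$ (as the Lebesgue integral of a bounded function), $\eta(1)=b$ by construction, the distributional derivative $\eta'$ coincides with $F\in BV$, and $\eta''=F'=\sigma$ in $\cD'$. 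Finally, $F$ is right-continuous at $0$ with $F(0)=a$, so the trace of $\eta'$ at $0$ equals $a$ in the sense made precise in the preceding subsection.

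For uniqueness, suppose $\eta_{1},\eta_{2}$ are two solutions and let $w=\eta_{1}-\eta_{2}$. Then $w\in C([0,1])\cap\{w'\in BV\}$ with $w''=0$ in $\cD'$, $w(1)=0$, and $(Tr\,w')(0)=0$. The distributional equation $w''=0$ forces the BV function $w'$ to equal a constant Lebesgue-a.e., and its cadlag representative is therefore this constant everywhere; evaluating at $0$ and using the boundary condition gives $w'\equiv0$. Consequently $w$ is constant, and the condition $w(1)=0$ yields $w\equiv0$, which proves uniqueness.

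I do not expect a genuine obstacle here; the only delicate point is bookkeeping about which pointwise representative of the BV function $\eta'$ one uses to interpret the boundary condition $\eta'(0)=a$. Fixing the cadlag version once and for all (as is implicit in the discussion of traces of $BV$ functions above) makes both the verification of the candidate and the uniqueness argument routine.
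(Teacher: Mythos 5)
Your proof is correct. The paper states this lemma without proof, and your argument — existence via the explicit double antiderivative $\eta(t)=b-\int_t^1\bigl(a+\sigma((0,s])\bigr)ds$ with the cadlag representative fixing the trace at $0$, and uniqueness from the fact that a distribution with vanishing second derivative is affine — is exactly the standard construction the authors leave implicit; your attention to which pointwise representative of $\eta'$ realizes the boundary condition is the only point of substance, and you handle it correctly.
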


\subsubsection{An integration by parts lemma}
\begin{lem}
\label{lem:(Integration-by-parts)}(Integration by parts) Suppose
that $\eta,\phi\in C([0,1])\cap\left\{ \phi'\in BV\right\} .$ Then
\[
\left(\eta'',\phi\right)=\eta'\phi\vert_{0}^{1}-\eta\phi'\vert_{0}^{1}+\left(\eta,\phi''\right).
\]
\end{lem}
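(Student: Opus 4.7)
The plan is to reduce the identity to the classical integration-by-parts formula for smooth functions via mollification, with the delicate point being a careful extension of $\eta$ and $\phi$ past the endpoints so that the mollified derivatives recover the correct boundary traces. Note that under the stated hypothesis (reading it symmetrically as $\eta',\phi'\in BV((0,1))$), the distributional second derivatives $\eta''$ and $\phi''$ are finite signed Radon measures on $(0,1)$, so both pairings $(\eta'',\phi)=\int\phi\,d\eta''$ and $(\eta,\phi'')=\int\eta\,d\phi''$ are well defined. Moreover, the one-sided limits $\eta'(0^+),\eta'(1^-),\phi'(0^+),\phi'(1^-)$ exist in $\R$ and coincide with the values $\eta'(0),\eta'(1),\phi'(0),\phi'(1)$ appearing in the lemma, by the convention fixed earlier.

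Next, I would extend $\eta$ and $\phi$ to continuous functions on $\R$ by linear extrapolation outside $[0,1]$, using slopes equal to the one-sided traces $\eta'(0),\eta'(1)$ (respectively $\phi'(0),\phi'(1)$). This choice is crucial: it preserves BV of the first derivative and, critically, avoids introducing spurious Dirac masses at $0$ or $1$ in the distributional second derivative. With $\rho_\epsilon$ a standard mollifier, define $\eta_\epsilon=\eta*\rho_\epsilon$ and $\phi_\epsilon=\phi*\rho_\epsilon$. These are $C^\infty$ in a neighborhood of $[0,1]$, so classical calculus gives
\[
\int_0^1 \eta_\epsilon''\,\phi_\epsilon\,dt = \eta_\epsilon'\phi_\epsilon\big|_0^1 - \eta_\epsilon\phi_\epsilon'\big|_0^1 + \int_0^1 \eta_\epsilon\,\phi_\epsilon''\,dt.
\]

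Finally I would pass $\epsilon\to 0$ in each term. Continuity of the extensions yields uniform convergence $\eta_\epsilon\to\eta$ and $\phi_\epsilon\to\phi$ on $[0,1]$; the construction of the extension ensures $\eta_\epsilon'(0)\to\eta'(0)$, $\eta_\epsilon'(1)\to\eta'(1)$ and likewise for $\phi'$, handling the four boundary terms. For the interior terms, the mollified measures $\eta_\epsilon''\,dt = (\eta''*\rho_\epsilon)\,dt$ converge weak-$*$ to $\eta''$ and have uniformly bounded total variation; paired against the uniformly convergent $\phi_\epsilon$ this delivers $\int\eta_\epsilon''\phi_\epsilon\,dt\to\int\phi\,d\eta''$, and symmetrically on the other side.

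The main obstacle is the boundary analysis. An arbitrary continuous extension (or none at all) can cause $\eta_\epsilon'(0)$ to converge to the average $\tfrac{1}{2}(\eta'(0^-)+\eta'(0^+))$ rather than $\eta'(0^+)$, or can create boundary atoms of $\eta_\epsilon''\,dt$ that contribute extra limits and break the identity. Linear extrapolation with exactly matching one-sided slopes is the clean device that removes these pathologies, after which the limit passage is routine.
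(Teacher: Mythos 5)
Your argument is correct, but it takes a genuinely different route from the paper. The paper simply applies the BV integration-by-parts theorem of Evans--Gariepy (Section 5.3) twice --- once to write $\int_{(0,1)}\phi\,d\eta''=-\int\eta'\phi'\,dx+\phi\eta'\vert_0^1$ and once for the symmetric identity with $\eta$ and $\phi$ exchanged --- and then subtracts, so that the cross term $\int\eta'\phi'\,dx$ cancels and the stated formula drops out; the only approximation needed is the ``straightforward'' one hidden in invoking that theorem for this class of functions. Your proof instead rebuilds everything from the smooth case: you extend $\eta$ and $\phi$ linearly past the endpoints with slopes equal to the one-sided traces, mollify, apply classical integration by parts, and pass to the limit. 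The two approaches trade different difficulties. The paper's subtraction trick means one never has to control the product of the two derivative terms or worry about second-derivative measures charging the endpoints, at the cost of citing an external theorem. Your argument is self-contained and makes the boundary-trace mechanism completely explicit; its delicate points are exactly the ones you flag --- the matched-slope extension prevents Dirac masses of $\eta''$ and $\phi''$ at $0$ and $1$, which is what legitimizes both the convergence $\eta_\epsilon'(0)\to\eta'(0)$ (no averaging of one-sided limits) and the passage to the limit $\int_0^1\eta_\epsilon''\phi_\epsilon\to\int_{(0,1)}\phi\,d\eta''$ over the \emph{closed} interval (weak-$*$ convergence of measures with uniformly bounded total variation suffices precisely because the limit measure puts no mass on $\{0,1\}$). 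Both proofs are valid; yours is longer but more elementary in its prerequisites.
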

\begin{proof}
Applying the integration by parts theorem from \cite[Section 5.3]{EvansGariepy92}
along with a straightforward approximation argument, we conclude that
\begin{align*}
\int_{(0,1)}\phi d\eta'' & =-\int_{(0,1)}\eta'\phi'dx+\phi\eta'\vert_{0}^{1}\\
\int_{(0,1)}\eta d\phi'' & =-\int_{(0,1)}\eta'\phi'dx+\phi'\eta\vert_{0}^{1}.
\end{align*}
Subtracting these gives the desired result.
\end{proof}

\subsubsection{Alexandrov's theorem}

The following is a modification of a theorem of Alexandrov. 
\begin{thm}
\label{thm:(Modified-Alexandrov)}\cite{EvansGariepy92}(Alexandrov)
Let $f:(0,1)\to\R$ be convex, then $f''=\mu$ as elements of $\cD'$
where $\mu$ is a Radon measure on $(0,1)$.
\end{thm}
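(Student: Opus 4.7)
The plan is to reduce the claim about the distributional second derivative of a convex function to the well-known fact that the distributional derivative of a monotone function is a Radon measure. The key observation is that convexity on $(0,1)$ automatically gives local Lipschitz regularity, so the distributional derivative $f'$ may be identified with the $\cL$-a.e.\ classical derivative, and this derivative is itself monotone.

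First, I would fix any compact interval $[a,b]\subset(0,1)$ and use the three-slope inequality for convex functions: choosing $[a',b']\supset[a,b]$ with $[a',b']\subset(0,1)$, the slopes of secant lines between any two points of $[a,b]$ are sandwiched between the slopes $(f(a)-f(a'))/(a-a')$ and $(f(b')-f(b))/(b'-b)$, giving a uniform Lipschitz bound on $[a,b]$. Hence $f$ is locally absolutely continuous on $(0,1)$, so $f'$ exists $\cL$-a.e.\ and agrees with the distributional derivative in the sense that $\int f'\phi\,dx=-\int f\phi'\,dx$ for every $\phi\in\cD$. Convexity also forces $f'$ to be monotone non-decreasing on its domain of definition.

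Second, I would pick the unique left-continuous non-decreasing everywhere-defined representative $g$ of $f'$. Since $g$ is non-decreasing and locally bounded (by local Lipschitz-ness of $f$), there is a unique non-negative locally finite Borel measure $\mu$ on $(0,1)$, i.e.\ a Radon measure, characterized by $\mu([s,t))=g(t)-g(s)$ for all $s<t$ in $(0,1)$. An integration-by-parts argument against test functions---first verified for $\phi$ that is a linear combination of indicators of half-open intervals using the definition of $\mu$, then extended by uniform approximation and dominated convergence---yields $\int g\phi'\,dx=-\int\phi\,d\mu$ for all $\phi\in\cD$. Combining with the first step gives $-\int f\phi''\,dx=\int f'\phi'\,dx=-\int\phi\,d\mu$, i.e., $f''=\mu$ in $\cD'$.

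The one step that requires genuine care is the integration by parts for a general (possibly jump-discontinuous) monotone function against smooth test functions; here one has to handle the atoms of $\mu$ cleanly. I would do this by approximating $\phi\in\cD$ on a dyadic grid in a compact interval containing $\text{supp}\,\phi$ and exploiting the telescoping identity $\sum\phi(t_i)(g(t_{i+1})-g(t_i))=-\sum g(t_{i+1})(\phi(t_{i+1})-\phi(t_i))+\text{boundary}$, passing to the limit using uniform continuity of $\phi$ and local finiteness of $\mu$. Once this is established, local finiteness of $\mu$---and hence the Radon property---is immediate from the local boundedness of $g$ on $(0,1)$ inherited from the local Lipschitz bound on $f$.
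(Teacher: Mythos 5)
The paper does not prove this result; it cites \cite{EvansGariepy92} and relies on it as a known fact (it is the one-dimensional, distributional-derivative form of the regularity of convex functions, considerably more elementary than the full Alexandrov theorem in $\R^n$). Your argument is correct and is the standard proof: the three-slope inequality gives local Lipschitz regularity and hence local absolute continuity, so the distributional derivative is the a.e.\ pointwise derivative, which is monotone; the Lebesgue--Stieltjes measure of a monotone, locally bounded, left-continuous representative $g$ of $f'$ is then a non-negative Radon measure $\mu$ on $(0,1)$; and the integration-by-parts identity $\int g\phi'\,dx=-\int\phi\,d\mu$ for $\phi\in\cD$ (which you can also obtain directly via Fubini applied to $\int\phi\,d\mu=\int\bigl(\int_{\mathrm{supp}\,\phi}\indicator{y\le x}\phi'(x)\,dx\bigr)d\mu(y)$, avoiding the dyadic-grid limiting argument) yields $f''=\mu$ in $\cD'$. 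Nothing is missing.
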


\subsection{Square Roots of Positive Measures\label{sub:Square-Roots-of}}
\begin{lem}
We have the equality
\begin{align*}
S\left(\mu\right) & =\inf_{\phi\geq0}\,\int\phi d\mu+\int\frac{1}{\phi}dx=\inf_{\substack{\phi\geq0\\
\phi\,u.s.c.\\
\phi\in L^{\infty}
}
}\,\int\phi d\mu+\int\frac{1}{\phi}dx.
\end{align*}
\end{lem}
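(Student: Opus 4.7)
The plan is to prove the chain
\[
S(\mu) \;\leq\; \inf_{\phi \geq 0} \Bigl[\int \phi\, d\mu + \int \tfrac{1}{\phi}\, dx\Bigr] \;\leq\; \inf_{\substack{\phi \geq 0,\ \phi \in L^{\infty}\\ \phi\ u.s.c.}} \Bigl[\int \phi\, d\mu + \int \tfrac{1}{\phi}\, dx\Bigr] \;\leq\; S(\mu).
\]
The middle inequality is immediate, since the third infimum ranges over a smaller class, so the content lies in the two outer bounds.

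The lower bound is a straightforward AM--GM argument. For any Borel $\phi \geq 0$, non-negativity of $\mu_{sing}$ gives $\int \phi\, d\mu \geq \int \phi\,\mu_{ac}\, dx$, and the pointwise inequality $\phi(x)\mu_{ac}(x) + 1/\phi(x) \geq 2\sqrt{\mu_{ac}(x)}$ (with the usual conventions $1/0 = +\infty$, $0\cdot\infty = 0$) integrates to the claim. Note that $\int\sqrt{\mu_{ac}}\,dx$ is finite by Cauchy--Schwarz, so $S(\mu) < \infty$.

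For the upper bound I will exhibit a family of u.s.c.\ bounded $\phi$'s whose objectives converge to $S(\mu)$. First I reduce to the case of bounded $\mu_{ac}$ by truncation at level $M$: the extra mass $\int (\mu_{ac} - \mu_{ac}\wedge M)\,dx$ can be made arbitrarily small, and since the bounded $\phi$ below has $\|\phi\|_\infty \leq 1/\sqrt{\delta}$, any error from this step is absorbed by taking $M$ large depending on $\delta$. Fix small $\epsilon, \delta > 0$. By Vitali--Carath\'eodory (followed by truncation at $\|\mu_{ac}\|_\infty$) pick a bounded l.s.c.\ $g \geq \mu_{ac}$ with $\int (g - \mu_{ac})\, dx < \epsilon$. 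Let $E$ be a Borel set with $\cL(E) = 0$ and $\mu_{sing}(E^c) = 0$, and by outer regularity of $\cL$ choose open $U \supset E$ with $\cL(U) < \epsilon$. Define
\[
f(x) = \begin{cases} g(x), & x \in U^c,\\ N, & x \in U,\end{cases} \qquad \phi(x) = \frac{1}{\sqrt{f(x) + \delta}},
\]
where $N \geq \|g\|_\infty$ is to be chosen. Openness of $U$ and the bound $N \geq g$ on $U^c$ give l.s.c.\ of $f$ across $\partial U$, so $f$ is l.s.c.\ on $[0,1]$, and hence $\phi$ is u.s.c.\ and bounded by $1/\sqrt{\delta}$. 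Splitting the integrals over $U$ and $U^c$, using $g \geq \mu_{ac}$, subadditivity of $\sqrt{\cdot}$, and the Cauchy--Schwarz bound $\int_{U^c}\sqrt{g - \mu_{ac}}\, dx \leq \sqrt{\epsilon}$, a direct computation yields
\[
\int \phi\, d\mu + \int \frac{1}{\phi}\, dx \;\leq\; 2\int \sqrt{\mu_{ac}}\, dx + O\Bigl(\sqrt{\epsilon} + \sqrt{\delta} + \tfrac{\mu([0,1])}{\sqrt{N}} + \sqrt{N}\,\epsilon\Bigr).
\]
Choosing $N = 1/\epsilon$ and sending $\delta \to 0$ then $\epsilon \to 0$ gives the upper bound.

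The main technical obstacle is controlling the singular contribution $\int \phi\, d\mu_{sing}$, which is not automatically small for u.s.c.\ $\phi$ and threatens to blow up since $\phi$ is only a priori bounded by $1/\sqrt{\delta}$. The construction circumvents this by inflating $f$ to the large constant $N$ on a small open neighborhood $U$ of the concentration set of $\mu_{sing}$, forcing $\phi \sim 1/\sqrt{N}$ there; the compensating cost $\int_U 1/\phi\, dx \sim \sqrt{N}\, \cL(U) \leq \sqrt{N}\,\epsilon$ stays controlled because $\cL(U)$ was chosen small, and the balance $N = 1/\epsilon$ optimizes the two opposing effects.
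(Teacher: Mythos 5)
Your proof is correct and follows the same two-part strategy as the paper's: AM--GM (with $\int\phi\,d\mu\geq\int\phi\,\mu_{ac}\,dx$) for the lower bound, and for the upper bound an explicit u.s.c.\ bounded test function built by isolating the singular part of $\mu$ in a small open set where $\phi$ is forced small. The technical variant you choose is a little different from the paper's: you dominate $\mu_{ac}$ from above by a bounded l.s.c.\ $g$ via Vitali--Carath\'eodory (after first truncating $\mu_{ac}$), which makes the term $\int\phi\,\mu_{ac}\,dx\leq\int\sqrt{\mu_{ac}}$ immediate; the paper instead uses a smooth $L^{1}$-approximation $f_{\epsilon}$ of $\mu_{ac}$ that need not dominate, and pays for this with an extra $\frac{1}{m}\norm{\mu_{ac}-f_{\epsilon}}_{L^{1}}$ term that is killed by optimizing in the regularizing parameter $m$. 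Both constructions capture the same picture and yield the same estimate.

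One small caution on the order of limits at the end. Your cap $N=1/\epsilon$ must satisfy $N\geq\norm{g}_{\infty}$ to keep $f$ l.s.c., and $\norm{g}_{\infty}\leq M$ where $M=M(\delta)$ is the truncation level; thus $\epsilon\leq1/M(\delta)$, i.e.\ $\epsilon$ is constrained by $\delta$. Sending $\delta\to0$ first for fixed $\epsilon$ eventually violates this constraint. The correct order is $\epsilon\to0$ (with $\epsilon\leq1/M(\delta)$) followed by $\delta\to0$, or equivalently a single diagonal limit $\delta\to0$ with $\epsilon=\epsilon(\delta)\to0$ chosen appropriately. This is a trivial fix and does not affect the validity of the argument.
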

\begin{proof}
Begin by noting that
\[
\inf_{\phi\geq0}\,\int\phi\mu_{ac}dx+\int\frac{1}{\phi}dx\geq2\int\sqrt{\mu_{ac}}
\]
by the arithmetic-geometric inequality so that
\begin{align*}
S\left(\mu\right) & \leq\inf_{\substack{\phi\geq0\\
\phi\,u.s.c.\\
\phi\in L^{\infty}
}
}\,\int\phi d\mu+\int\frac{1}{\phi}dx\leq\inf_{\substack{\phi\geq0\\
\phi\in C^{\infty}([0,1])
}
}\int\phi d\mu+\int\frac{1}{\phi}dx=:G(\mu)
\end{align*}
Write $\mu=fdx+d\nu$ with $\nu\perp\cL$ where $\cL$ denotes the
Lebesgue measure. Let $f_{\epsilon}$ be a smooth, non-negative $L^{1}$
approximation to $f$. Find a Hahn decomposition $I=A\sqcup B$ such
that $\nu\left(A\right)=0$, $\cL\left(B\right)=0$. Given any $U\supset B$
open, $m,\lambda>0$, define 
\[
\phi=\phi_{U,\epsilon,m,\lambda}=\begin{cases}
\frac{1}{\sqrt{f_{\epsilon}}+m} & U^{c}\\
\lambda\wedge\frac{1}{\sqrt{f_{\epsilon}}+m} & U
\end{cases}
\]
for $m>0$. Note that $\phi$ is upper semi-continuous, non-negative,
and satisfies the bound
\[
\phi\leq\frac{1}{m}\wedge\frac{1}{\sqrt{f_{\epsilon}}+m}.
\]
So, we get that
\[
G\left(\mu\right)\leq\int\phi d\mu+\int\frac{1}{\phi}dx.
\]

We have that
\begin{align*}
\int\phi d\mu & =\int\phi fdx+\int\phi d\nu=\int\phi\left(f-f_{\epsilon}\right)dx+\int\phi\left(f_{\epsilon}dx+d\nu\right)\leq\frac{1}{m}\norm{f-f_{\epsilon}}_{L^{1}}+\int\phi\left(f_{\epsilon}dx+d\nu\right).
\end{align*}
Also we have that 
\begin{align*}
\int\phi\left(f_{\epsilon}dx+d\nu\right) & =\int\phi f_{\epsilon}dx+\int_{B}\phi d\nu\leq\int\sqrt{f_{\epsilon}}\,dx+\lambda\nu(U)
\end{align*}
and that
\begin{align*}
\int\frac{1}{\phi}dx & =\int_{I\backslash U}\left(\sqrt{f_{\epsilon}}+m\right)\,dx+\int_{U}\frac{1}{\lambda}\vee\left(\sqrt{f_{\epsilon}}+m\right)\,dx\leq\int\sqrt{f^{\epsilon}}\,dx+m+\frac{1}{\lambda}\cL\left(U\right).
\end{align*}
Adding up, we get that
\[
G\left(\mu\right)\leq2\int\sqrt{f_{\epsilon}}\,dx+\lambda\nu\left(U\right)+\frac{1}{\lambda}\cL\left(U\right)+m+\frac{1}{m}\norm{f-f_{\epsilon}}_{L^{1}}
\]
and optimizing in $m$ and $\lambda$ gives
\[
G\left(\mu\right)\leq2\int\sqrt{f_{\epsilon}}\,dx+2\sqrt{\nu\left(U\right)\cL\left(U\right)}+2\norm{f-f_{\epsilon}}_{L^{1}}^{1/2}.
\]
Taking $U\downarrow B$ and then $f_{\epsilon}\to f$ in $L^{1}$
proves that$G\left(\mu\right)\leq2\int\sqrt{f}\,dx=2\int\sqrt{\mu_{ac}}.$

Therefore, we have shown that $S\leq G\leq2\int\sqrt{\mu_{ac}}\leq S$
as desired.\end{proof}
\begin{cor}
\label{cor:sqrt-usc}The functional $S:\cM_{+}\to\R$ is upper semi-continuous
in the weak-$*$ and norm topologies on $\cM_{+}$.\end{cor}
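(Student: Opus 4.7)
The plan is to exploit the variational representation
\[
S(\mu) = \inf_{\substack{\phi\geq 0 \\ \phi\,u.s.c. \\ \phi\in L^{\infty}}} \int \phi\,d\mu + \int \frac{1}{\phi}\,dx
\]
established in the preceding lemma. The strategy is the standard observation that an infimum of upper semi-continuous functionals is upper semi-continuous, so it suffices to check that for each admissible test function $\phi$, the map
\[
\Lambda_{\phi}:\cM_{+}\to\R,\qquad \Lambda_{\phi}(\mu) = \int \phi\,d\mu + \int \frac{1}{\phi}\,dx
\]
is upper semi-continuous on $\cM_{+}$ in both topologies of interest. The additive constant $\int 1/\phi\,dx$ is harmless, so the question reduces to upper semi-continuity of $\mu\mapsto \int\phi\,d\mu$.

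The norm case is immediate: for bounded Borel $\phi$, $|\int\phi\,d(\mu_n-\mu)|\leq \norm{\phi}_{\infty}\norm{\mu_n-\mu}_{TV}$, so $\mu\mapsto\int\phi\,d\mu$ is even norm continuous. For the weak-$*$ case I would invoke the Portmanteau-type fact that on the compact space $[0,1]$, every bounded upper semi-continuous function $\phi$ can be written as a pointwise decreasing limit $\phi = \inf_k \phi_k$ of continuous bounded functions $\phi_k$. If $\mu_n \stackrel{w-*}{\to} \mu$, then for each $k$, $\int \phi_k\,d\mu_n \to \int \phi_k\,d\mu$ by definition of weak-$*$ convergence, hence
\[
\limsup_n \int \phi\,d\mu_n \leq \limsup_n \int \phi_k\,d\mu_n = \int \phi_k\,d\mu.
\]
Since $\phi_k\downarrow\phi$ and all measures are finite (note that weak-$*$ convergence forces $\mu_n([0,1])\to\mu([0,1])$, giving the uniform mass bound needed to apply monotone convergence), I can send $k\to\infty$ to conclude $\limsup_n\int\phi\,d\mu_n \leq \int\phi\,d\mu$, as required.

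Combining these two observations with the variational representation, for any sequence $\mu_n\to\mu$ in either topology one obtains, for every admissible $\phi$,
\[
\limsup_n S(\mu_n) \leq \limsup_n \Lambda_{\phi}(\mu_n) \leq \Lambda_{\phi}(\mu),
\]
and taking the infimum over $\phi$ on the right gives $\limsup_n S(\mu_n)\leq S(\mu)$, which is the definition of upper semi-continuity. The only subtle point, and the one I would write most carefully, is the Portmanteau-style approximation of a bounded u.s.c.\ function by a decreasing sequence of continuous ones on $[0,1]$; everything else is routine bookkeeping.
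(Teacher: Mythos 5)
Your proof is correct and follows essentially the same route as the paper: both reduce to the variational representation of $S$ as an infimum over nonnegative, bounded, u.s.c.\ test functions $\phi$, and both establish upper semi-continuity of $\mu\mapsto\int\phi\,d\mu$ by approximating $\phi$ from above by continuous functions (the paper phrases this as $\int\phi\,d\mu=\inf_{\psi\geq\phi,\ \psi\in C[0,1]}\int\psi\,d\mu$, which is the same fact you prove via a decreasing continuous sequence and monotone convergence). No gaps.
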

\begin{proof}
We proved above that
\[
S(\mu)=\inf_{\substack{\phi\geq0\\
\phi\,u.s.c.\\
\phi\in L^{\infty}
}
}\,\int\phi d\mu+\int\frac{1}{\phi}dx.
\]
Note that for a fixed such $\phi$, the functional $\mu\to\int\phi d\mu$
is upper semi-continuous in the norm and weak-$*$ topologies. Indeed,
the assumptions on $\phi$ imply that 
\[
\int\phi d\mu=\inf_{\substack{\psi\geq\phi\\
\psi\in C\left[0,1\right]
}
}\int\psi d\mu.
\]
Therefore it can be written as the point-wise infimum of continuous
functionals, hence it is upper semi-continuous. As a result $S(\mu)$
is the point-wise infimum of upper semi-continuous functions so that
it is upper semi-continuous.\end{proof}
\begin{cor}
\label{cor:D-usc}The functional $D:\cK_{h,\xi}\to\R$ is upper semi-continuous
in the norm topology. 
\end{cor}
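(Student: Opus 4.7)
My plan is to leverage the previous corollary, which establishes upper semi-continuity of $S$ on $\cM_+$ in the weak-$*$ topology, together with the identity $D(\eta) = S(\eta'')$. So the task reduces to showing that the map $\eta \mapsto \eta''$ sends norm-convergent sequences in $\cK_{h,\xi}$ to weak-$*$-convergent sequences in $\cM_+$.

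Suppose $\eta_n \to \eta$ in norm (i.e., uniformly) in $\cK_{h,\xi}$. First, I would observe that uniform convergence of $\eta_n$ to $\eta$ implies $\eta_n'' \to \eta''$ in $\cD'$, since distributional differentiation is sequentially continuous on $\cD'$ and $C([0,1]) \hookrightarrow \cD'$ continuously. Next, by \prettyref{lem:K-xi-bv}, for every $n$ we have
\[
\eta_n''((0,1)) = \eta_n'(1) - \eta_n'(0) \leq \xi'(1) - \xi'(0),
\]
so the sequence $\{\eta_n''\}$ is uniformly bounded in total variation in $\cM([0,1])$. By the Banach-Alaoglu theorem (applied to $\cM$ as the dual of $C([0,1])$), every subsequence admits a further subsequence converging weak-$*$ to some $\mu \in \cM_+$.

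Since weak-$*$ convergence in $\cM$ implies convergence in $\cD'$, the distributional limit must coincide with $\mu$. But we already know that the distributional limit is $\eta''$, so $\mu = \eta''$ on the whole of $(0,1)$. By the subsequence principle, the entire sequence $\eta_n''$ converges weak-$*$ to $\eta''$ in $\cM_+$.

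Applying the previous corollary, we conclude
\[
\limsup_n D(\eta_n) = \limsup_n S(\eta_n'') \leq S(\eta'') = D(\eta),
\]
which is the claimed upper semi-continuity. The only subtle point is ensuring that the distributional limit and the weak-$*$ subsequential limit agree and that the limit measure inherits the correct behavior at the endpoints $\{0,1\}$; this is handled by the convention that $\eta''$ is represented by a measure in $\cM((0,1))$ with no atoms at the endpoints, together with the uniform total variation bound above.
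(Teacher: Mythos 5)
The paper leaves this corollary without an explicit proof, so your proposal is filling in an omitted argument; your approach is the natural one and uses the right ingredients (the identity $D(\eta)=S(\eta'')$, the total-variation bound from \prettyref{lem:K-xi-bv}, Banach--Alaoglu, and \prettyref{cor:sqrt-usc}). There is one small imprecision worth correcting: you conclude that the \emph{entire} sequence $\eta_n''$ converges weak-$*$ in $\cM([0,1])$ to $\eta''$, justified by the subsequence principle plus the convention that $\eta''$ carries no mass at $\{0,1\}$. That does not quite follow, because the convention applies to each $\eta_n''$ and to $\eta''$ individually, but it does not prevent a subsequential weak-$*$ limit $\mu$ from acquiring atoms at $\{0,1\}$ (mass can concentrate at the endpoints, which the $\cD'$-identification only controls on the open interval). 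Hence distinct subsequences could in principle have limits differing by point masses at the endpoints, and the subsequence principle does not deliver whole-sequence convergence. The fix is cheap and you essentially have it: either pass directly to a subsequence achieving $\limsup_n D(\eta_n)$ and extract a weak-$*$ limit $\mu$ along it, or simply observe that any such $\mu$ satisfies $\mu_{ac} = (\eta'')_{ac}$ Lebesgue-a.e. (they agree on $(0,1)$ and $\{0,1\}$ has Lebesgue measure zero), so $S(\mu)=S(\eta'')=D(\eta)$ regardless of possible endpoint atoms; combined with the upper semicontinuity of $S$ from \prettyref{cor:sqrt-usc}, this yields $\limsup_n D(\eta_n)\le S(\mu)=D(\eta)$ as required.
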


\subsection{Basic Theorems in $\Gamma$-convergence.\label{sub:Basic-Theorems-in}}

In the following, let $X$ be a topological space and let $F_{\beta},F:X\to[-\infty,\infty]$.
Recall that the family of functions $F_{\beta}$ is said to be \emph{sequentially
equi-coercive} if for every $C>0$ there is a sequentially compact
set $K\subset X$ such that for every $\beta$, 
\[
\left\{ F_{\beta}\leq C\right\} \subset K.
\]

Furthermore recall that a sequence of functions $G_{\beta}\to G$
continuously if for every $x_{\beta}\to x$, $G_{\beta}(x_{\beta})\to G(x)$.
\begin{thm}
\label{thm:FTGC}(Fundamental Theorem of $\Gamma$-convergence) Suppose
F is not identically infinite. Suppose further that $F_{\beta}\stackrel{\Gamma}{\to}F$,
$F_{\beta}$ are sequentially equi-coercive, F has a unique minimizer,
$x_{\beta}$ are minimizers of $F_{\beta}$, and $x$ is the unique
minimizer of $F$. Then
\[
\lim_{\beta\to\infty}\min F_{\beta}=\min F
\]
and $x_{\beta}\to x.$\end{thm}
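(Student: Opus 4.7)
The plan is to sandwich $\min F_\beta$ between $\min F$ from above and below, and then deduce convergence of minimizers via a subsequence argument. The sequential setting does not cause any real difficulty, but it does mean that every compactness step must be phrased in terms of sequences (rather than nets), and that every reduction to a subsequence must be justified by the standard subsequence-of-subsequences principle.

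For the upper bound $\limsup_\beta \min F_\beta \le \min F$, I would apply the $\Gamma$-$\limsup$ property to the unique minimizer $x$ of $F$: it yields a recovery sequence $y_\beta \to x$ with $\limsup F_\beta(y_\beta) \le F(x) = \min F$. Since $\min F_\beta \le F_\beta(y_\beta)$ for each $\beta$, the upper bound follows at once. This is where the hypothesis that $F$ is not identically $+\infty$ is used, as it guarantees $\min F < \infty$ and hence a nontrivial recovery sequence.

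For the lower bound $\liminf_\beta \min F_\beta \ge \min F$, I would use equi-coercivity and the $\Gamma$-$\liminf$ property. By the upper bound just established, there exists $C < \infty$ such that $\min F_\beta = F_\beta(x_\beta) \le C$ for all sufficiently large $\beta$, so by sequential equi-coercivity the minimizers $x_\beta$ eventually lie in a sequentially compact set $K \subset X$. Given any subsequence $\beta_n \to \infty$, extract a further subsequence $\beta_{n_k}$ along which $x_{\beta_{n_k}} \to x^*$ for some $x^* \in K$. By the $\Gamma$-$\liminf$ inequality applied along this subsequence,
\[
\liminf_{k\to\infty} F_{\beta_{n_k}}(x_{\beta_{n_k}}) \ge F(x^*) \ge \min F.
\]
Since the original subsequence was arbitrary, the lower bound on $\liminf \min F_\beta$ follows. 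Combined with the upper bound this gives $\lim_\beta \min F_\beta = \min F$.

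Finally, for the convergence $x_\beta \to x$, the argument of the previous paragraph shows that every subsequential limit $x^*$ of $(x_\beta)$ satisfies $F(x^*) \le \liminf F_{\beta_{n_k}}(x_{\beta_{n_k}}) = \min F$, so $x^*$ minimizes $F$. Uniqueness of the minimizer forces $x^* = x$, and the subsequence principle then promotes this to $x_\beta \to x$ in $X$. The main subtlety — and it is a genuine one in this non-metrizable setting — is to ensure that the subsequence principle is valid for the sequential topology at hand; this works because sequential equi-coercivity provides compactness along \emph{every} subsequence, so the standard argument (if $x_\beta \not\to x$, pass to a subsequence avoiding a neighborhood of $x$ and derive a contradiction by extracting a convergent sub-subsequence) goes through unchanged.
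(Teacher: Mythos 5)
Your proposal is correct and follows essentially the same route as the paper: use the $\Gamma$-$\limsup$ inequality at the unique minimizer $x$ to bound $\min F_\beta$ from above, use sequential equi-coercivity plus the $\Gamma$-$\liminf$ inequality to bound from below and to identify subsequential limits of $x_\beta$ as minimizers of $F$, then invoke uniqueness and the subsequence principle. The only difference is organizational (the paper fixes a sequence $\beta_n$ up front and runs one chain of inequalities, whereas you separate the upper bound, lower bound, and convergence steps), and you should be a bit more careful that the "recovery sequence $y_\beta$" for a continuous parameter $\beta$ is really constructed per extracted subsequence, exactly as the paper's definition of $\Gamma$-convergence along every $\beta_n\to\infty$ requires.
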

\begin{proof}
Without loss of generality, $F_{\beta},F$ are finite. Fix a sequence
$\beta_{n}\to\infty$. By the $\Gamma-\limsup$ inequality, we have
that there is a sequence $y_{n}\to x$ 
\[
F(x)\geq\limsup F_{\beta_{n}}(y_{n})\geq\limsup\inf_{z\in X}F_{\beta_{n}}(z)\geq\limsup F_{\beta_{n}}(x_{\beta_{n}}).
\]
As a result, there is a $C$ such that $F_{\beta_{n}}(x_{\beta_{n}})\leq C$.
By sequential equi-coercivity, this implies that $\{x_{\beta_{n}}\}\subset K$
for some sequentially compact set. In particular, there is a further
subsequence such that $x_{\beta_{n_{k}}}\to y$ for some $y\in X$.
Then 
\[
F(x)\geq\liminf F_{\beta_{n_{k}}}(x_{\beta_{n_{k}}})\geq F(y)
\]
 by the above inequalities and the $\Gamma-\liminf$ inequality. Then
since $x$ is the unique minimizer, $x=y$. Thus by the subsequence
principle, $x_{\beta_{n}}\to x$. 
\end{proof}
The proof of the following is self-evident.
\begin{thm}
\label{thm:stability-GC} (Stability under continuous perturbations)
Suppose that $F_{\beta}\stackrel{\Gamma}{\to}F$ and that $G_{\beta}\to G$
continuously. Then $F_{\beta}+G_{\beta}\stackrel{\Gamma}{\to}F+G$.
\end{thm}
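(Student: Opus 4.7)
The plan is to verify directly the two inequalities in the definition of sequential $\Gamma$-convergence for the sum $F_\beta + G_\beta$, exploiting the hypothesis that $G_\beta \to G$ \emph{continuously}, i.e., $G_\beta(x_\beta) \to G(x)$ whenever $x_\beta \to x$. Since the definition requires both conditions to hold along every subsequence $\beta_n \to \infty$, it suffices to fix such a subsequence and argue once; to keep notation light I will drop the subsequence index and just write $\beta$.

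For the $\Gamma$-$\liminf$ inequality, I would take an arbitrary sequence $x_\beta \to x$ and write
\[
\liminf_{\beta} \bigl(F_\beta(x_\beta) + G_\beta(x_\beta)\bigr) \;\geq\; \liminf_{\beta} F_\beta(x_\beta) \;+\; \lim_{\beta} G_\beta(x_\beta),
\]
where the inequality uses the standard superadditivity of $\liminf$ together with the existence of the limit on the right. That limit exists and equals $G(x)$ by continuous convergence, and by the $\Gamma$-$\liminf$ hypothesis on $F_\beta$ the first term on the right is at least $F(x)$. Adding gives $\liminf_\beta (F_\beta+G_\beta)(x_\beta) \geq F(x) + G(x) = (F+G)(x)$, which is what we need.

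For the $\Gamma$-$\limsup$ inequality, I would fix $x \in X$ and invoke the $\Gamma$-$\limsup$ hypothesis on $F_\beta$ to produce a recovery sequence $x_\beta \to x$ with $\limsup_\beta F_\beta(x_\beta) \leq F(x)$. Since the same sequence $x_\beta$ converges to $x$, continuous convergence yields $G_\beta(x_\beta) \to G(x)$. Then
\[
\limsup_\beta \bigl(F_\beta(x_\beta) + G_\beta(x_\beta)\bigr) \;\leq\; \limsup_\beta F_\beta(x_\beta) + \lim_\beta G_\beta(x_\beta) \;\leq\; F(x) + G(x),
\]
exhibiting $x_\beta$ as a recovery sequence for $F+G$ at $x$.

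There is essentially no obstacle here beyond bookkeeping: the argument reduces to the elementary inequality $\liminf(a_\beta + b_\beta) \geq \liminf a_\beta + \lim b_\beta$ when $b_\beta$ converges, and the analogous statement for $\limsup$. The only mild subtlety is that one should confirm that arithmetic with the values $\pm \infty$ causes no issue; in the application to \prettyref{thm:gamma-conv-min-conv} the perturbation $G_\beta$ is the linear term $h^2 \hat{\mu}(0)$ (or its zero-temperature analogue), which is real-valued and continuous in the weak-$*$ topology on $\cA$, so continuous convergence holds along the scaling $h/\beta \to \bar h$ and no $\infty - \infty$ ambiguity arises.
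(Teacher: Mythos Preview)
Your proof is correct and is precisely the routine verification the paper has in mind: the authors write only ``The proof of the following is self-evident'' and give no argument, so your direct check of the $\Gamma$-$\liminf$ and $\Gamma$-$\limsup$ inequalities via superadditivity/subadditivity with a convergent summand is exactly the intended (omitted) proof.
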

\bibliographystyle{plain}
\bibliography{parisimeasures}

\end{document}